\documentclass{amsart}
\usepackage{graphicx} 
\usepackage{amsmath, amsthm, amssymb}
\usepackage{tikz-cd}
\usepackage{tikz}
\usetikzlibrary{positioning}
\usepackage{stmaryrd}
\usepackage{relsize}
\usepackage{enumitem} 
\usepackage{ragged2e}
\usepackage{microtype}
\usepackage{amsfonts}
\usepackage{extarrows}

\usepackage{placeins}
\usepackage{array}
\usepackage{listings}
\usepackage{makecell}
\usepackage{lmodern}
\usepackage{mathtools}
\usepackage{setspace}
\usepackage{longtable}  
\usepackage{multirow}
\usepackage{caption}  
\usepackage{comment}

\usepackage{csquotes}
\usepackage{longtable}
\usepackage{array}
\usepackage{capt-of}
\usepackage{MnSymbol}
\usepackage{lscape}
\usepackage{float}
\usepackage{hyperref}
\usepackage{url}
\usepackage{rotating}
\usepackage{color}
\usepackage{youngtab}
\usepackage{ytableau}
\usepackage{tabularx}

\DeclareMathOperator{\Hom}{Hom}

\DeclareMathOperator{\Ext}{Ext}
\DeclareMathOperator{\GL}{GL}
\DeclareMathOperator{\Gr}{Gr}
\DeclareMathOperator{\End}{End}

\title[Koszul Artin Schelter-Regular Algebras of Dimension four]{Discrete Invariants of Koszul Artin-Schelter Regular Algebras of Dimension four}

\author{Vishal Bhatoy}
\address{Department of Mathematics, Carleton University, Ottawa, Ontario, Canada}
\email{vishalbhatoy@cmail.carleton.ca}

\author{Colin Ingalls}
\address{Department of Mathematics, Carleton University, Ottawa, Ontario, Canada}
\email{coliningalls@cunet.carleton.ca}

\begin{document}
\begin{abstract}
We compute the superpotentials for known families of Koszul Artin-Schelter regular algebras of dimension four using Magma, and apply Schur-Weyl duality from representation theory to determine the relevant invariants. Through the Borel-Weil theorem, we interpret these invariants as sections of line bundles over partial flag varieties, resulting in geometric invariants that, in some cases, correspond to K3 surfaces. We compute discrete invariants of these geometric invariants and use them to distinguish algebras.
\end{abstract}
\maketitle
\newtheorem{theorem}{Theorem}[section]  
\newtheorem{lemma}[theorem]{Lemma}       
\newtheorem{proposition}[theorem]{Proposition}  
\newtheorem{corollary}[theorem]{Corollary}      
\theoremstyle{definition}
\newtheorem{definition}[theorem]{Definition}    
\newtheorem{remark}[theorem]{Remark}            
\newtheorem{example}[theorem]{Example}           
\newtheorem{question}[theorem]{Question}         
\newtheorem{custom_def}[theorem]{} 
\newtheorem*{mainthm}{Theorem}

\section{Introduction}

Artin-Schelter (AS) regular algebras are an important class of graded algebras that can be thought of as noncommutative versions of polynomial rings. They were introduced by Artin and Schelter~\cite{Artin2007SomeAA, MR1128218, ARTIN1987171}, who classified all AS-regular algebras of global dimension up to three, assuming the generators have degree one. Since then, researchers have studied AS-regular algebras in higher dimensions, focusing on their structure, modules, and homological properties~\cite{bellamy2016noncommutative, rogalski2023artinschelter, Shelton2001OnKA, 1996_Tate, 1993}.

Dubois-Violette~\cite{duboisviolette2014multilinearformsgradedalgebras}, along with Bocklandt, Schedler, and Wemyss~\cite{BOCKLANDT20101501}, showed that every three-dimensional quadratic AS-regular algebra \(A\) is isomorphic to an \(i\)-th derivation quotient algebra \(D(w,i)\), where \(w\) is a twisted superpotential. Mori and Smith~\cite{Mori_Izuru_Smith} further proved that such a superpotential \(w\) is unique up to scalar multiplication.

We investigate the classification of 4-dimensional Koszul AS-regular algebras via their defining superpotentials, which are tensors in \( V^{\otimes 4} \), where \( \dim_{\mathbb{K}} V = 4 \). The classification problem reduces to identifying which such tensors define AS-regular algebras. Using a dataset of 77 known families of 4-dimensional regular algebras, we computed explicit superpotentials for each case. We decomposed the tensor space \( V^{\otimes 4} \) into irreducible \( \mathrm{GL}(V) \)-representations via Schur-Weyl duality, obtaining multiplicity spaces indexed by partitions of 4. Applying the Borel-Weil theorem, we interpreted these multiplicity spaces as spaces of global sections of line bundles on partial flag varieties. The relations of each family are given in~\cite[Appendix]{bhatoy-colin-felix-ravali-components}.

Each superpotential \( w \in V^{\otimes 4} \) yields five projective invariants, corresponding to linear systems of hypersurfaces in associated flag varieties. The simplest cases recover classical invariants such as the symmetrized quartic in \( \mathbb{P}^3 \) and the determinant. In several families, we found irreducible quartic surfaces that are singular along skew lines, while in others, the surfaces are reducible unions of a plane and a cubic surface. Moreover, we sometimes obtain K3 surfaces. We computed invariants for all 77 families using Magma~\cite{magma}. All the codes used to compute these invariants are available at the GitHub repository~\cite{vishal2025}.

This work is related to our paper~\cite{bhatoy-colin-felix-ravali-components}, where we studied the algebraic stack $\mathcal{A}_4$ and found 45 components by studying the families of AS-regular algebras. We computed discrete invariants of the associated linear systems to distinguish these components. In particular, we computed the dimensions and degrees of the primary components of the scheme, its reduced subscheme, its singular subscheme, and the reduced subscheme of the singular subscheme. We also computed other invariants, such as the line scheme, point scheme, and Nakayama automorphism, but they did not provide much distinction and are not discussed in this paper.

The invariants we used to classify 4-dimensional Koszul AS-regular algebras, however, are not twist-invariant. For example, consider the algebra \( A = \mathbb{K}\langle x,y\rangle / (yx - xy) \), 
and twist it by a scalar \( q \in \mathbb{K}^{\times} \) to obtain 
\( A_q = \mathbb{K}\langle x,y\rangle / (yx - q xy), \) which is a twist of $A_1.$ Decomposing the relations into symmetric and alternating parts, 
\( R \subseteq V \otimes V = \mathrm{Sym}^2 V \oplus \bigwedge^2 V \), we see that \( \mathrm{Sym}^2 V = 0 \) for \( q = 1 \), while \( \mathrm{Sym}^2 V \neq 0 \) for \( q \neq 1 \). 

Section~2 provides the necessary background and definitions. We recall connected graded algebras and define Artin-Schelter (AS) regular algebras, listing the possible minimal projective resolution types in global dimension~4. We then define superpotentials and Koszul algebras. We present Theorem~\ref{Aisomorphic-Dw2}, which shows that every 4-dimensional Koszul AS-regular algebra is isomorphic to a derivation-quotient algebra \(D(w,2)\).

In Section 3, we review Schur-Weyl duality and the Borel-Weil theorem.
Using these results, we define the geometric locus $X_\lambda(w)$ associated to a highest weight $\lambda$, which is the base locus of the corresponding linear system on a flag variety $G/B.$ We give an example for partitions $\lambda \vdash 4$. For each case, we describe the partial flag varieties, the associated line bundles, 
and the spaces of global sections that determine the geometric locus.

In Section~4, we study an example Algebra H and compute its invariants for the partition \(\lambda = (4)\). We also discuss the concept of absolute irreducibility, explaining how each primary component becomes absolute irreducible over a finite field extension. 
We then discuss in detail the computations for the partition $\lambda = (2,2)$.

In Section~5, we perform computations for the 77 known families of 4-dimensional Koszul AS-regular algebras and calculate their invariants for all partitions $\lambda \vdash 4.$

In Section~6, we present the main result, Theorem~\ref{maintheorem-inv}. Using this theorem, we partition the 45 components of the algebraic stack \(\mathcal{A}_4\) into 39 equivalence classes, or boxes, where each class consists of algebras that share the same discrete invariants coming from the projective schemes \(X_{400}, X_{000}, X_{020}, X_{101}, X_{210}\). The theorem shows that if \(A\) and \(B\) are generic families in different boxes, then \(A \not\simeq B\), so components in different equivalence classes are distinct.

\section{Background and Definitions}
Let $\mathbb{K}$ be an algebraically closed field of characteristic zero.
We will say $A$ is connected graded if $A$ is graded $A = \bigoplus_{i \geq 0} A_i$ with $A_0=\mathbb{K}.$
\begin{definition}
\textbf{Artin-Schelter regular algebra} \cite{ARTIN1987171}:
 An Artin-Schelter regular algebra (AS-regular algebra) is a connected graded algebra $A$ over a field $\mathbb{K}$ generated
in degree one, such that the following conditions hold:
\begin{enumerate}
    \item $A$ has finite global dimension $d$.
    \item \textnormal{\( A \) has polynomial growth, i.e., \( f(n) = \dim_\mathbb{K} A_n \) is bounded above by a polynomial function of \( n \).}
    \item \( A \) is Gorenstein, i.e.,
\[
\Ext^{i}_{A}(\mathbb{K}, A) \cong 
\begin{cases} 
0 & \text{if } i \neq d, \\ 
 \mathbb{K}(l) & \text{if } i = d 
\end{cases}
\]
for some shift of grading \( l \in \mathbb{Z} \). This integer \( l \) is called the Gorenstein parameter.
\end{enumerate}
\end{definition}

By the dimension of an AS-regular algebra, we will mean the integer $d$ appearing above. If $A$ is AS-regular, then the trivial left $A$-module $\mathbb{K}$ has a minimal free resolution of the form
\[ 0 \rightarrow P_d \rightarrow \dots \rightarrow P_1 \rightarrow P_0 \rightarrow \mathbb{K} \rightarrow 0, \]
where $P_w = \bigoplus_{s=1}^{n_w} A(-i_{w,s})$ for some positive integers $n_w$ and $i_{w,s}$. The Gorenstein condition (3) implies that the above free resolution is symmetric in the sense that the dual complex of the above sequence is a free resolution of the trivial right $A$-module (after a degree shift). As a consequence, we have $P_0 = A$, $P_d = A(-l)$, $n_w = n_{d-w}$ and $i_{w,s} + i_{d-w,n_w-s+1} = l$ for all $w, s$.

Under the natural hypothesis that $A$ is a Noetherian domain and $
\dim A = 4$, there are three possible resolution types  \cite[Prop.~1.4]{Lu_2007}:
\begin{align*}\label{eq:type3}
0 \rightarrow A(-4) \rightarrow A(-3)^4 \rightarrow A(-2)^6 \rightarrow A(-1)^4 \rightarrow A \rightarrow \mathbb{K} \rightarrow 0
\end{align*}
\begin{align*}
0 \rightarrow A(-5) \rightarrow A(-4)^3 \rightarrow A(-3)^2 \oplus A(-2)^2 \rightarrow A(-1)^3 \rightarrow A \rightarrow \mathbb{K} \rightarrow 0   
\end{align*}
\begin{align*}
0 \rightarrow A(-7) \rightarrow A(-6)^2 \rightarrow A(-4) \oplus A(-3) \rightarrow A(-1)^2 \rightarrow A \rightarrow \mathbb{K} \rightarrow 0    
\end{align*}
with Hilbert series, respectively, 
$$ \frac{1}{(1 - t)^4}, \frac{1}{(1 - t)^3 (1 - t^2)}, \frac{1}{(1 - t)^2 (1 - t^2) (1 - t^3)}.$$

Suggested by the form of the first resolution above, we say such an algebra is of type \textit{(14641)}. In this paper we mainly deal with algebras of type \textit{(14641)}. An algebra of type \textit{(14641)} is Koszul \cite[Theorem 0.1]{MR2529094}. So we will restrict attention to the situation where $A$ is generated in degree 1, and has defining relations in degree 2. 

\begin{definition}\label{def:superpotential}
\textbf{Superpotential} \cite[Definition 2.5]{Mori_Izuru_Smith}:
Let $m \in \mathbb{N}^+$ and let $V$ be a finite-dimensional $\mathbb{K}$-vector space.  
Define a linear map $\varphi : V^{\otimes m} \to V^{\otimes m}$ by  
\[
\varphi(v_1 \otimes v_2 \otimes \cdots \otimes v_{m}) := v_m \otimes v_1 \otimes \cdots \otimes v_{m-1}.
\]
\begin{enumerate}
    \item An element $w \in V^{\otimes m}$ is called a superpotential if $\varphi(w) = w$.
    
    \item An element $w \in V^{\otimes m}$ is called a twisted superpotential if 
    \[
    (\mu \otimes \mathrm{id}^{\otimes (m-1)})\varphi(w) = w
    \]
    for some $\mu \in \mathrm{GL}(V)$. This $\mu$ is called the Nakayama automorphism.
\end{enumerate}  
\end{definition}

\begin{definition}
\textbf{Tensor algebra}\label{def:derivationquotient-alg}:
Let $V$ be a finite-dimensional $\mathbb{K}$-vector space with basis $\{ x_1, \dots, x_n \}$. The tensor algebra of $V$ is defined by
$$ T(V) := \bigoplus_{m \geq 0} V^{\otimes m},$$
where $V^{\otimes 0} := \mathbb{K}$.
Given an element $w \in V^{\otimes m}$ with $m \geq 1$, there exist unique elements $w_j \in V^{\otimes (m-1)}$ such that
\[
w = \sum_{j=1}^n x_j \otimes w_j.
\]
The element $\partial_{x_j} w := w_j$ is called the left partial derivative of $w$ with respect to $x_j$.

More generally, for any positive integer $i$, the $i$-th order left partial derivatives of $w$ are defined recursively. For any sequence of indices $(j_1, \dots, j_i) \in \{1, \dots, n\}^i$, we define
\[
\partial_{x_{j_1}} \cdots \partial_{x_{j_i}} w := \partial_{x_{j_1}} \left( \partial_{x_{j_2}} \cdots \partial_{x_{j_i}} w \right),
\]
where each partial derivative acts on the leftmost tensor factor.

Let $\partial^i w$ denote the set of all such $i$-th order left partial derivatives:
\[
\partial^i w := \left\{ \partial_{x_{j_1}} \cdots \partial_{x_{j_i}} w \;\middle|\; 1 \leq j_1, \dots, j_i \leq n \right\}.
\]

The $i$-th derivation quotient algebra associated to $w$ is defined as
\[
D(w, i) := T(V) \big/ \left( \partial^i w \right),
\]
i.e., the quotient of the tensor algebra by the ideal generated by all $i$-th order left partial derivatives of $w$.

We focus on the case where $\dim V = 4$ and $i = 2$. In this setting, $D(w,2)$ is generated by the $16$ second-order left partial derivatives of $w$. However, only $6$ of them are distinct.

The Nakayama automorphism $\mu$ from Definition~\ref{def:superpotential} is in $\mathrm{Aut}(D(w,2))$~\cite[Thm.~1.10]{Mori_Izuru_Smith}.
\end{definition}

\begin{example}\label{superpolyring}
Let \( V \) be a 4-dimensional \( \mathbb{K} \)-vector space with basis \( \{x_1, x_2, x_3, x_4\} \). Consider the element \( w \in V^{\otimes 4} \) defined by
\[
\begin{aligned}
w :=\ & -x_1x_2x_3x_4 + x_1x_2x_4x_3 + x_1x_3x_2x_4 - x_1x_3x_4x_2 - x_1x_4x_2x_3 + x_1x_4x_3x_2 \\
& + x_2x_1x_3x_4 - x_2x_1x_4x_3 - x_2x_3x_1x_4 + x_2x_3x_4x_1 + x_2x_4x_1x_3 - x_2x_4x_3x_1 \\
& - x_3x_1x_2x_4 + x_3x_1x_4x_2 + x_3x_2x_1x_4 - x_3x_2x_4x_1 - x_3x_4x_1x_2 + x_3x_4x_2x_1 \\
& + x_4x_1x_2x_3 - x_4x_1x_3x_2 - x_4x_2x_1x_3 + x_4x_2x_3x_1 + x_4x_3x_1x_2 - x_4x_3x_2x_1.
\end{aligned}
\]

Since \( \varphi(w) = w \), it follows that \( w \) is a superpotential.

We now express \( w \) in the form
\[
\begin{aligned}
w =\ & x_1x_2(-x_3x_4 + x_4x_3) + x_1x_3(x_2x_4 - x_4x_2) + x_1x_4(-x_2x_3 + x_3x_2) \\
& + x_2x_1(x_3x_4 - x_4x_3) + x_2x_3(-x_1x_4 + x_4x_1) + x_2x_4(x_1x_3 - x_3x_1) \\
& + x_3x_1(-x_2x_4 + x_4x_2) + x_3x_2(x_1x_4 - x_4x_1) + x_3x_4(-x_1x_2 + x_2x_1) \\
& + x_4x_1(x_2x_3 - x_3x_2) + x_4x_2(-x_1x_3 + x_3x_1) + x_4x_3(x_1x_2 - x_2x_1).
\end{aligned}
\]

Alternatively, we can write this as
\[
\begin{aligned}
w &= x_1x_2\, \partial^2_{x_1x_2} w + x_1x_3\, \partial^2_{x_1x_3} w + x_1x_4\, \partial^2_{x_1x_4} w \\
&\quad + x_2x_1\, \partial^2_{x_2x_1} w + x_2x_3\, \partial^2_{x_2x_3} w + x_2x_4\, \partial^2_{x_2x_4} w \\
&\quad + x_3x_1\, \partial^2_{x_3x_1} w + x_3x_2\, \partial^2_{x_3x_2} w + x_3x_4\, \partial^2_{x_3x_4} w \\
&\quad + x_4x_1\, \partial^2_{x_4x_1} w + x_4x_2\, \partial^2_{x_4x_2} w + x_4x_3\, \partial^2_{x_4x_3} w,
\end{aligned}
\]
where
\[
\begin{array}{rlrl}
\partial^2_{x_1x_2} w &= -x_3x_4 + x_4x_3, &
\partial^2_{x_2x_1} w &= x_3x_4 - x_4x_3, \\[3pt]
\partial^2_{x_1x_3} w &= x_2x_4 - x_4x_2, &
\partial^2_{x_3x_1} w &= -x_2x_4 + x_4x_2, \\[3pt]
\partial^2_{x_1x_4} w &= -x_2x_3 + x_3x_2, &
\partial^2_{x_4x_1} w &= x_2x_3 - x_3x_2, \\[3pt]
\partial^2_{x_2x_3} w &= -x_1x_4 + x_4x_1, &
\partial^2_{x_3x_2} w &= x_1x_4 - x_4x_1, \\[3pt]
\partial^2_{x_2x_4} w &= x_1x_3 - x_3x_1, &
\partial^2_{x_4x_2} w &= -x_1x_3 + x_3x_1, \\[3pt]
\partial^2_{x_3x_4} w &= -x_1x_2 + x_2x_1, &
\partial^2_{x_4x_3} w &= x_1x_2 - x_2x_1,
\end{array}
\]
and 
\[
\partial^2_{x_i x_i} w = 0 \quad \text{for all } i.
\]

Also note that 
\[
\begin{aligned}
\partial^2_{x_j x_i} w &= - \partial^2_{x_i x_j} w, \quad \text{for } i \neq j. \\    
\end{aligned}
\]

The derivation-quotient algebra is:
\[
D(w, 2) = \mathbb{K}\langle x_1, x_2, x_3, x_4 \rangle \big/ \left( \partial^2_{x_i x_j} w \mid 1 \leq i < j \leq 4 \right).
\]
Hence,
\[
D(w, 2) \cong \mathbb{K}[x_1, x_2, x_3, x_4],
\]
the commutative polynomial ring in four variables.
\end{example}

\begin{example}\label{superskewpolyring}
Consider the skew polynomial ring in four variables, defined by
\[
A = \mathbb{K} \langle x_1, x_2, x_3, x_4 \rangle \big/ (x_i x_j - q_{ij} x_j x_i),
\]
for parameters \( q_{ij} \in \mathbb{K}^\times \) satisfying \(1 \leq i < j \leq 4\).

The corresponding superpotential is
\[
w = \sum_{\sigma \in S_4} (-1)^\sigma \alpha_\sigma \, x_{\sigma(1)} \ldots x_{\sigma(4)},
\]
where \( \alpha_\sigma \in \mathbb{K} \) are scalars depending on the permutation \( \sigma \in S_4 \).

The second-order cyclic partial derivatives of \( w \) are given by
\[
\partial^2_{x_i x_j}w = -\alpha_{jikl} x_k x_l + \alpha_{jilk} x_l x_k,
\quad \text{and} \quad
\partial^2_{x_i x_i}w = 0,
\]
where \( i, j, k, l \in \{1,2,3,4\} \) are all distinct.

These coefficients satisfy the conditions
\[
\alpha_{iklj} \alpha_{kijl} = \alpha_{kilj} \alpha_{ikjl},
\quad \text{and} \quad
q_{ij} = \frac{\alpha_{klij}}{\alpha_{klji}}.
\]
\end{example}

\begin{theorem}\cite[Theorem 11]{duboisviolette2014multilinearformsgradedalgebras}\label{Aisomorphic-Dw2}:
For every 4-dimensional Koszul AS-regular algebra $A$, there exists a unique twisted superpotential $ w \in V^{\otimes 4}$ such that
    \[
    A \simeq D(w, 2).
    \]
\end{theorem}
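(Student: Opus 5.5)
The natural route is through the Koszul dual. Write $A = T(V)/(R)$ with $R \subseteq V^{\otimes 2}$ and $\dim R = 6$. Since $A$ is Koszul of type (14641), its minimal resolution of $\mathbb{K}$ is the Koszul complex $A \otimes W_i^{*}$, where
\[
W_0=\mathbb{K},\quad W_1=V,\quad W_2=R,\quad W_i=\bigcap_{a+b+2=i} V^{\otimes a}\otimes R\otimes V^{\otimes b}\subseteq V^{\otimes i}.
\]
Comparing with the resolution $0\to A(-4)\to A(-3)^4\to A(-2)^6\to A(-1)^4\to A\to\mathbb{K}\to 0$ gives $\dim W_4=1$ and $\dim W_3=4$. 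The plan is to take $w$ to be a spanning vector of $W_4 = (R\otimes V^{\otimes 2})\cap(V\otimes R\otimes V)\cap(V^{\otimes 2}\otimes R)$. Uniqueness up to a nonzero scalar, which is the sense in which "unique" must be read, then comes for free from $\dim W_4 = 1$, once we show that any $w'$ with $D(w',2)=A$ must lie in $W_4$.

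\textbf{Step 1: the derivatives of $w$ span $R$.} Since $w\in V^{\otimes 2}\otimes R$, we can write $w=\sum_{j,k} x_jx_k\otimes \partial_{x_j}\partial_{x_k}w$ with every $\partial_{x_j}\partial_{x_k}w\in R$, so $\partial^2 w\subseteq R$. For equality, the map $(V^{\otimes 2})^*\to R$, $\xi\mapsto (\xi\otimes\mathrm{id})(w)$, must be surjective. I would get this from the Gorenstein condition: the pairing $A_2^!\otimes A_2^!\to A_4^!\cong\mathbb{K}$ in the Koszul dual is nondegenerate, because $A^!$ is Frobenius when $A$ is AS-regular of dimension 4 (Smith's theorem). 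Dually, nondegeneracy says that $w$, viewed as an element of $V^{\otimes 2}\otimes V^{\otimes 2}$, has rank exactly $6$ as a tensor in $R\otimes R$ (interpreted through $W_2=R$). Hence its second-order contractions span all of $R$, and $D(w,2)=T(V)/(R)=A$.

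\textbf{Step 2: $w$ is twisted.} Nondegeneracy of the Frobenius pairing in degrees $(1,3)$ and $(3,1)$ gives two isomorphisms $V^*\cong W_3$, one from contracting on the first factor and one from contracting on the last. Composing them yields $\mu\in\mathrm{GL}(V)$ with $(\mu\otimes\mathrm{id}^{\otimes 3})\varphi(w)=w$. Equivalently, this is the Nakayama automorphism of the Frobenius algebra $A^!$, transported back to $A$.

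\textbf{Step 3: uniqueness.} Suppose $w'$ is a twisted superpotential with $(\partial^2 w')=R$. The twisted cyclicity lets us move the derivative positions around, so $w'\in V^{\otimes 2}\otimes R$ and, after applying $\varphi$ and $\mu$, also $w'\in R\otimes V^{\otimes 2}$ and $w'\in V\otimes R\otimes V$. Therefore $w'\in W_4=\mathbb{K}w$.

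The main obstacle is Step 1, showing that $\partial^2 w$ is all of $R$ rather than a proper subspace. I expect to route it through the Frobenius property of $A^!$, which is equivalent to the Gorenstein condition (3) together with Koszulity. That equivalence is the genuinely homological input; everything else is linear algebra with the spaces $W_i$.
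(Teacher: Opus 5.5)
Your proposal is correct and follows the same route as the paper. The paper does not reprove the statement; it cites Dubois-Violette and then, as you do, takes $w$ to be a generator of $A_4^{!*}=\bigcap_{i=0}^{2}V^{\otimes i}\otimes R\otimes V^{\otimes(2-i)}$, using that $A^!$ is Frobenius. Your Steps 1--3 correctly supply the details left to that citation:
\begin{itemize}
\item nondegeneracy of $A^!_2\times A^!_2\to A^!_4$ gives $\mathrm{span}\,\partial^2 w=R$;
\item nondegeneracy in degrees $(1,3)$ and $(3,1)$ gives the Nakayama twist;
\item twisted cyclicity forces any competing $w'$ into the one-dimensional $W_4$.
\end{itemize}
One small notational slip: the terms of the Koszul resolution are $A\otimes W_i$, not $A\otimes W_i^{*}$.
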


\begin{definition}
\textbf{Koszul Algebras}:
Let \( A = T(V)/\langle R \rangle \) be a connected \(\mathbb{N}\)-graded quadratic algebra over a field \( \mathbb{K} \), where \( R \subseteq V \otimes V \). Its quadratic dual is
\[
A^! = T(V^*) / \langle R^\perp \rangle,
\]
where \( R^\perp \subseteq V^* \otimes V^* \) is the orthogonal complement of \( R \).

Let \( h_A(t) \) and \( h_{A^!}(t) \) be the Hilbert series of \( A \) and \( A^! \), respectively.
If \( A \) is Koszul, then the minimal graded projective resolution of the trivial \( A \)-module \( \mathbb{K} \) is linear, and the Koszul duality identity holds \cite{Positselskii1995} (see also \cite[Thm.~5.9]{MR1388568}):
\[
h_A(t)\, h_{A^!}(-t) = 1.
\]
    
\end{definition}

\begin{definition}
\textbf{Frobenius Algebra}:
A finite-dimensional graded associative \(\mathbb{K}\)-algebra \(A^!\) is called Frobenius of degree \(n\) if there exists a nondegenerate bilinear pairing
\[
\langle -, - \rangle : A^! \otimes A^! \to \mathbb{K}
\]
such that
\[
\langle a, b \cdot c \rangle = \langle a \cdot b, c \rangle \quad \text{for all } a, b, c \in A^!.
\]
\end{definition}

\begin{theorem}{\cite[Corollary D]{lu2007koszul}}\label{thmfrobn}
Let \(A\) be a connected graded algebra. Then \(A\) is AS-regular if and only if the Ext-algebra \(E := \bigoplus_{i= 0}^{\infty} \Ext^i_A(\mathbb{K}, \mathbb{K})\) is a Frobenius algebra.
\end{theorem}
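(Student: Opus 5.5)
The statement is Corollary D of Lu--Palmieri--Wu--Zhang: a connected graded algebra $A$ is AS-regular if and only if its Ext-algebra $E=\bigoplus_i \Ext^i_A(\mathbb{K},\mathbb{K})$ is Frobenius. The plan is to work with the minimal free resolution $P_\bullet\to\mathbb{K}$ and the Yoneda product on $E$. The two directions then reduce to two facts: the Gorenstein symmetry of the resolution on the regular side, and the nondegeneracy of a pairing into the top degree of $E$ on the Frobenius side.

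\textbf{AS-regular implies Frobenius.} Suppose $A$ is AS-regular of dimension $d$ with Gorenstein parameter $l$.
\begin{enumerate}
\item Since $\mathrm{gldim}\,A=d$ and the resolution is minimal, $E^i=\Hom_A(P_i,\mathbb{K})$, so $E$ is finite dimensional and concentrated in cohomological degrees $0,\dots,d$.
\item As recalled after the definition of AS-regularity, $P_d=A(-l)$. Hence $E^d$ is one dimensional, and I take the pairing $\langle a,b\rangle=\epsilon(ab)$, where $\epsilon:E\to E^d\cong\mathbb{K}$ is projection to the top degree.
\item This pairing automatically satisfies $\langle a,bc\rangle=\langle ab,c\rangle$ by associativity. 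The only real content is nondegeneracy: for each nonzero $a\in E^i$ we need some $b\in E^{d-i}$ with $ab\neq 0$.
\item To get nondegeneracy, apply $\Hom_A(-,A)$ to $P_\bullet$. The Gorenstein condition says the dual complex $P_\bullet^\vee$ is a minimal free resolution of $\mathbb{K}(l)$ as a right module.
\item I then identify the Yoneda product $E^i\otimes E^{d-i}\to E^d$ with the composite of the natural pairing $\Hom(P_i,\mathbb{K})\otimes\Hom(P_{d-i},\mathbb{K})\to\mathbb{K}$ and the duality isomorphism $P_{d-i}^\vee\otimes_A\mathbb{K}\cong \Hom(P_i,\mathbb{K})^*$ that comes from the self-duality of the resolution.
\end{enumerate}
I expect this identification of the Yoneda product with the duality pairing to be the main obstacle in this direction. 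The cleanest route is probably through $A_\infty$-structures on $E$ rather than a direct chain-level computation.

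\textbf{Frobenius implies AS-regular.} Suppose $E$ is Frobenius, with top degree $d$.
\begin{enumerate}
\item Finite dimensionality of $E$ gives finite global dimension $d$.
\item Frobenius forces $E^d$ to be one dimensional, so $P_d=A(-l)$ for a single shift $l$.
\item For the Gorenstein condition I compute $\Ext^i_A(\mathbb{K},A)$ as the cohomology of $P_\bullet^\vee$.
\item Here I use the $A_\infty$ Koszul duality of Lu--Palmieri--Wu--Zhang: $A$ is recovered from the $A_\infty$-algebra $E$, and $\Ext_A(\mathbb{K},A)$ corresponds to $\Hom$ over $E$ from $\mathbb{K}$ to $E$.
\item Over a Frobenius algebra $E$ is self-injective, with $E\cong E^*$ up to shift. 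Therefore this $\Hom$ equals $\mathbb{K}$ concentrated in degree $d$, which is exactly the Gorenstein condition.
\end{enumerate}
The step I expect to be hardest here is transferring the Frobenius property through the $A_\infty$ Koszul duality, and specifically showing that the higher multiplications $m_n$ respect the pairing.

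For this paper I would simply cite \cite[Corollary D]{lu2007koszul}. In the Koszul case relevant here, $E=A^!$ and all $m_n$ with $n\ge 3$ vanish, so the argument reduces to the classical statement that $A^!$ is Frobenius if and only if $A$ is Gorenstein, proved via the Koszul complex $A\otimes (A^!)^*$.
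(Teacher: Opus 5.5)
Your plan to end by citing \cite[Corollary D]{lu2007koszul} is what the paper does. However, your sketch of the converse has a genuine gap that the citation does not close.

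In the direction ``$E$ Frobenius $\Rightarrow$ $A$ AS-regular'', your steps yield only finite global dimension and the Gorenstein condition. The definition of AS-regular used in this paper also requires $A$ to be generated in degree one and to have polynomial growth. Neither follows from $E$ being Frobenius.
\begin{itemize}
\item $A=\mathbb{K}[x]$ with $\deg x=2$ has $E\cong\mathbb{K}[\xi]/(\xi^2)$, which is Frobenius, but $A$ is not generated in degree one.
\item $A=\mathbb{K}\langle x_1,x_2,x_3\rangle/(x_1x_3+x_2^2+x_3x_1)$ is Koszul with $h_{A^!}(t)=1+3t+t^2$. Its dual $A^!$ is Frobenius, because the product $A^!_1\times A^!_1\to A^!_2\cong\mathbb{K}$ is evaluation on the full-rank tensor $x_1\otimes x_3+x_2\otimes x_2+x_3\otimes x_1$. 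Yet $h_A(t)=1/(1-3t+t^2)$ grows exponentially.
\end{itemize}
The same objection applies to your Koszul-case reduction: Smith's criterion gives ``$A^!$ Frobenius iff $A$ Gorenstein'', which says nothing about growth or generation.

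The problem is definitional. Lu--Palmieri--Wu--Zhang call $A$ AS-regular when it is AS-Gorenstein of finite global dimension, with no growth or degree-one hypothesis. For that notion your outline is a reasonable route. With the definition adopted here, the equivalence holds only after adding degree-one generation and polynomial growth as hypotheses on the Frobenius side. This does not affect the paper: it uses only the forward direction (to get $\dim_{\mathbb{K}}A^!_4=1$), and that direction is fine because AS-regular in the paper's sense implies AS-regular in theirs.

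The step you flagged as hardest is also easier than you expect: you do not need the higher products $m_n$ to be compatible with the pairing. Since $E$ is free of rank one over itself, the Frobenius form $\epsilon\in E^*$ determines a morphism $E\to E^*$ (suitably shifted) in the derived category of $A_\infty$-modules. On cohomology this morphism is $a\mapsto\epsilon(a\,\cdot)$, so it is an isomorphism by nondegeneracy. Adjunction then gives $\Ext_E(\mathbb{K},E)\cong\Ext_E(\mathbb{K},E^*)\cong\mathbb{K}$ up to shift.
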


\begin{corollary}{\cite[Thm.~4.3 and Prop.~5.10]{MR1388568}}
If \(A\) is a Koszul algebra, then its Ext-algebra \(E\) is isomorphic to the dual \(A^!\). Hence, for Koszul algebras, Theorem~\ref{thmfrobn} implies that \(A\) is AS-regular if and only if \(A^!\) is Frobenius.
\end{corollary}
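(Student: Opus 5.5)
The statement is a direct combination of two facts, so I will prove it by identifying the Ext-algebra with the quadratic dual and then substituting into Theorem~\ref{thmfrobn}.

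\emph{Step 1: compute $E$ through the Koszul resolution.} Write $A=T(V)/\langle R\rangle$ with $R\subseteq V\otimes V$, and take the Koszul complex $K_\bullet = A\otimes (K_i)^*$, where
\[
K_0=\mathbb{K},\quad K_1=V,\quad K_i=\bigcap_{j+2+l=i} V^{\otimes j}\otimes R\otimes V^{\otimes l}\ \ (i\ge 2).
\]
This complex is always a minimal complex of graded free modules, with generators of $A\otimes K_i$ in degree $i$. Koszulity of $A$ means exactly that $K_\bullet$ is exact, so it is the minimal free resolution of $\mathbb{K}$. By minimality, applying $\Hom_A(-,\mathbb{K})$ kills all differentials, and so
\[
\Ext^i_A(\mathbb{K},\mathbb{K})\cong K_i^* .
\]
Dualizing the intersection gives
\[
K_i^*\cong (V^*)^{\otimes i}\Big/\sum_{j+2+l=i}(V^*)^{\otimes j}\otimes R^\perp\otimes (V^*)^{\otimes l}=A^!_i .
\]
Hence $E\cong A^!$ as graded vector spaces.

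\emph{Step 2: match the products.} I will check that the Yoneda product on $E$ corresponds to multiplication in $A^!$. A standard route is the diagonal, or length-one, description. Because $A$ is Koszul, $E$ is generated in degree one, namely by $\Ext^1=V^*$. The Yoneda product of degree-one classes is computed by the comparison maps of $K_\bullet$, which are given by the inclusions $K_{i+1}\subseteq V\otimes K_i$. Dually, this is concatenation $V^*\otimes K_i^*\to K_{i+1}^*$, so $E$ is the quotient of $T(V^*)$ by the kernel in degree $2$, which is $R^\perp$. That yields $E\cong A^!$ as algebras. This is the step I expect to need the most care, because of sign conventions and the order of multiplication (one may get $(A^!)^{\mathrm{op}}$). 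That ambiguity is harmless here, since a graded algebra is Frobenius if and only if its opposite is, via the transposed pairing. For this step I would simply cite \cite[Thm.~4.3 and Prop.~5.10]{MR1388568}.

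\emph{Step 3: conclude.} Theorem~\ref{thmfrobn} says that $A$ is AS-regular if and only if $E$ is Frobenius. Being Frobenius is preserved under graded algebra isomorphism and under passing to the opposite algebra. Therefore $A$ is AS-regular if and only if $A^!$ is Frobenius.
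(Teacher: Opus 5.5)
Your proposal is correct and follows the same route as the paper. The paper gives no argument beyond citing \cite[Thm.~4.3 and Prop.~5.10]{MR1388568} for $E\cong A^!$ and then substituting into Theorem~\ref{thmfrobn}. Your Koszul-complex sketch and your remark that the Frobenius property passes to the opposite algebra (so the $(A^!)^{\mathrm{op}}$ convention issue is harmless for the second claim) are useful extras. One small slip: with $K_i\subseteq V^{\otimes i}$ as you define it, the Koszul resolution is $A\otimes K_i$, not $A\otimes K_i^*$, and $A\otimes K_i$ is what you actually use to get $\Ext^i_A(\mathbb{K},\mathbb{K})\cong K_i^*$.
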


Assume \( A \) is a Koszul AS-regular algebra of global dimension \( n \). Then its Koszul dual \( A^{!} \) is a finite-dimensional Frobenius algebra of length \( n \), with \[
\dim_\mathbb{K} A^{!}_n = 1. \]
Therefore, \( A^{!}_n \) is a one-dimensional $\mathbb{K}$-vector space. Its dual $A_n^{!*} := (A_n^{!})^*$ embeds into \( V^{\otimes n} \), since $ A_n^{!*} \subseteq \big( (V^*)^{\otimes n} \big)^* \cong V^{\otimes n}.$

The superpotential \( w \) is defined as a generator (unique up to scalar) of this space:
\[
w \in A_n^{!*}.
\]
Moreover, this space can be explicitly realized as the intersection
\[
A_n^{!*} = \bigcap_{i=0}^{n - 2} V^{\otimes i} \otimes R \otimes V^{\otimes (n - 2 - i)} \subseteq V^{\otimes n},
\]
as proved in \cite[Lemma 2.11]{Mori_Izuru_Smith}.

In particular, for the Koszul algebra with global dimension \( n = 4 \), this becomes
$$ w \in A_4^{!*} = \bigcap_{i=0}^{2} V^{\otimes i} \otimes R \otimes V^{\otimes (2 - i)} \subseteq V^{\otimes 4}.$$
We develop Magma code that computes the superpotential for all Koszul AS-regular algebras of dimension four~\cite{vishal2025}. 

\begin{example}\label{example:algebraH}
Consider the Algebra H. The relations are given by
\[
\begin{alignedat}{2}
& x_2x_1 - x_1x_2 - x_1^2, &\quad & x_4x_3 + x_3x_4, \\
& x_3x_1 + h(-x_1x_4), &\quad & x_3x_2 + h(-fx_1x_4 - x_2x_4), \\
& x_4x_1 + h(-x_1x_3), &\quad & x_4x_2 + h(-fx_1x_3 - x_2x_3)
\end{alignedat}
\]
where \( f, h \in \mathbb{K} \).

Using the Magma code~\cite{vishal2025}, we compute the superpotential $w$ as follows:

$  h^4  x_1^2  x_3  x_4
+ h^4  x_1^2  x_4  x_3
+ h^4  x_1  x_2  x_3  x_4
+ h^4  x_1  x_2  x_4  x_3
+ (h^3 f - h^3)  x_1  x_3  x_1  x_3
- h^3  x_1  x_3  x_2  x_3
+ (-2 h^2 f + h^2)  x_1  x_3  x_4  x_1
+ h^2  x_1  x_3  x_4  x_2
+ (h^3 f - h^3)  x_1  x_4  x_1  x_4
- h^3  x_1  x_4  x_2  x_4
+ (-2 h^2 f + h^2)  x_1  x_4  x_3  x_1
+ h^2  x_1  x_4  x_3  x_2
- h^4  x_2  x_1  x_3  x_4
- h^4  x_2  x_1  x_4  x_3
+ h^3  x_2  x_3  x_1  x_3
- h^2  x_2  x_3  x_4  x_1
+ h^3  x_2  x_4  x_1  x_4
- h^2  x_2  x_4  x_3  x_1
+ h^2  x_3  x_1^2  x_4
+ h^2  x_3  x_1  x_2  x_4
+ (h f - h)  x_3  x_1  x_3  x_1
- h  x_3  x_1  x_3  x_2
- h^2  x_3  x_2  x_1  x_4
+ h  x_3  x_2  x_3  x_1
+ x_3  x_4  x_1^2
+ x_3  x_4  x_1  x_2
- x_3  x_4  x_2  x_1
+ h^2  x_4  x_1^2  x_3
+ h^2  x_4  x_1  x_2  x_3
+ (h f - h)  x_4  x_1  x_4  x_1
- h  x_4  x_1  x_4  x_2
- h^2  x_4  x_2  x_1  x_3
+ h  x_4  x_2  x_4  x_1
+ x_4  x_3  x_1^2
+ x_4  x_3  x_1  x_2
- x_4  x_3  x_2  x_1 .$
\end{example}

\section{Representation Theory}
In this section, we present two important theorems in representation theory: \textbf{Schur-Weyl duality} and the \textbf{Borel-Weil theorem}. Schur-Weyl duality reveals a deep connection between the irreducible representations of the symmetric group and the irreducible algebraic representations of the general linear group over a finite-dimensional vector space defined over a field $\mathbb{K}$. The Borel-Weil theorem provides a geometric realization of irreducible representations of linear algebraic groups by identifying them with spaces of global sections of line bundles over partial flag varieties.

\begin{proposition}~\cite[Theorem 3.1]{etingof2011introductionrepresentationtheory}\label{prop:621}
Let $G$ be a finite group and $\mathbb{K}$ be a field such that
$\operatorname{char}(\mathbb{K}) \nmid |G|$.
Then $\mathbb{K}[G]$ is semisimple and
\[
\mathbb{K}[G] \cong \bigoplus_i \End_{\mathbb{K}}(V_i)
\]
as $\mathbb{K}$-algebras,
where the direct sum is taken over all distinct irreducible representations \(V_i\) of \(G\).
\end{proposition}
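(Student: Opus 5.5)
The plan is the standard argument: Maschke's theorem, then Schur's lemma, then Artin–Wedderburn.

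\emph{Semisimplicity.} Let $M$ be a finite-dimensional $\mathbb{K}[G]$-module and $W\subseteq M$ a submodule. Choose any $\mathbb{K}$-linear projection $p\colon M\to W$ and average it over the group:
\[
P=\frac{1}{|G|}\sum_{g\in G} g\,p\,g^{-1}.
\]
This is legitimate because $|G|$ is invertible in $\mathbb{K}$. One checks that $P$ is $G$-equivariant and restricts to the identity on $W$. Hence $\ker P$ is a $G$-stable complement to $W$. It follows that every finite-dimensional module, in particular the regular module $\mathbb{K}[G]$, is a direct sum of irreducibles. So $\mathbb{K}[G]$ is semisimple.

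\emph{Wedderburn decomposition.} Write the regular module as
\[
\mathbb{K}[G]\cong\bigoplus_i V_i^{\oplus m_i},
\]
with the $V_i$ pairwise non-isomorphic irreducibles. The identification $\End_{\mathbb{K}[G]}(\mathbb{K}[G])\cong\mathbb{K}[G]^{op}$, via right multiplication, shows that $\mathbb{K}[G]^{op}\cong\prod_i \mathrm{Mat}_{m_i}(D_i)$, where $D_i=\End_G(V_i)$. Taking opposites and using $\mathrm{Mat}_m(D)^{op}\cong\mathrm{Mat}_m(D^{op})$ gives the algebra decomposition of $\mathbb{K}[G]$ itself.

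\emph{Identifying the factors.} This is the step where the hypotheses actually matter. To reach the stated form $\End_{\mathbb{K}}(V_i)$, we need $D_i=\mathbb{K}$ and $m_i=\dim V_i$. The first is Schur's lemma, which requires $\mathbb{K}$ algebraically closed. That is the standing assumption of this paper, and the cited statement in Etingof et al.\ also assumes it. Without it one only gets division algebras $D_i$. Then $\dim\Hom_G(\mathbb{K}[G],V_i)=\dim V_i$, since a $G$-map out of $\mathbb{K}[G]$ is determined by the image of $1$. This forces $m_i=\dim V_i$. Thus each factor is $\mathrm{Mat}_{\dim V_i}(\mathbb{K})\cong\End_{\mathbb{K}}(V_i)$.

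\emph{Exhaustiveness.} Every irreducible $V$ is a quotient of $\mathbb{K}[G]$, via $a\mapsto av$ for any nonzero $v\in V$. So every irreducible appears among the $V_i$, and the sum runs over all irreducible representations.

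Equivalently, one can present the isomorphism directly as $\mathbb{K}[G]\to\bigoplus_i\End_{\mathbb{K}}(V_i)$, sending $g\mapsto(\rho_i(g))_i$. Surjectivity follows from the density theorem applied to the pairwise non-isomorphic irreducibles $V_i$ with $\End_G(V_i)=\mathbb{K}$. Injectivity follows from semisimplicity, because an element acting as zero on every irreducible acts as zero on the regular module, hence is zero.
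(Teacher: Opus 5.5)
Your argument is correct. This includes your observation that identifying the Wedderburn factors with $\End_{\mathbb{K}}(V_i)$ requires $\mathbb{K}$ to be algebraically closed (otherwise only division algebras $D_i$ appear), a hypothesis missing from the statement as written but supplied by the paper's standing assumption. The paper gives no proof of its own and simply cites Etingof et al., whose argument is essentially your final paragraph (Maschke averaging for complete reducibility, then the density theorem showing $g\mapsto(\rho_i(g))_i$ is an isomorphism); your main Artin--Wedderburn route through $\End_{\mathbb{K}[G]}(\mathbb{K}[G])\cong\mathbb{K}[G]^{op}$ is an equally standard and complete alternative.
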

Let $c_\lambda$ be the Young symmetrizer corresponding to the partition $\lambda \vdash n$, as in \cite[page 58]{etingof2011introductionrepresentationtheory}.

\begin{theorem}~\cite[Theorem 4.36]{etingof2011introductionrepresentationtheory}\label{thm:641ksn}
The subspace
\[
U_\lambda = \mathbb{K}[S_n] \, c_\lambda
\]
obtained by left multiplication of $c_\lambda$ is an irreducible representation of $S_n$. Moreover, every irreducible representation of $S_n$ is isomorphic to some $U_\lambda$ for a unique partition $\lambda$ of $n$.
\end{theorem}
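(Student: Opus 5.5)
The plan is to prove Theorem~\ref{thm:641ksn} in three steps: show $c_\lambda$ is a nonzero multiple of an idempotent, deduce that $U_\lambda$ is irreducible, then show that distinct partitions give non-isomorphic modules and finish by counting conjugacy classes. Write $c_\lambda = a_\lambda b_\lambda$, where
\[
a_\lambda=\sum_{g\in P_\lambda} g, \qquad b_\lambda=\sum_{g\in Q_\lambda}\operatorname{sgn}(g)\, g .
\]
Here $P_\lambda$ and $Q_\lambda$ are the row and column stabilizers of a fixed Young tableau of shape $\lambda$. By Proposition~\ref{prop:621}, $\mathbb{K}[S_n]$ is semisimple, so every left ideal is a direct summand and a direct sum of irreducibles. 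The key combinatorial input is the following lemma. \emph{If $x\in\mathbb{K}[S_n]$ satisfies $p\,x\,q=\operatorname{sgn}(q)\,x$ for all $p\in P_\lambda$ and $q\in Q_\lambda$, then $x$ is a scalar multiple of $c_\lambda$.}

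To prove the lemma, I will show that for $g\notin P_\lambda Q_\lambda$ there is a transposition $t\in P_\lambda$ with $g^{-1}tg\in Q_\lambda$. This follows from a pigeonhole argument: some two entries lying in a common row of the tableau must lie in a common column of the $g$-transformed tableau. The identity $t x (g^{-1}tg)=-x$ then forces the coefficient of $g$ in $x$ to vanish. For $g=pq$, the invariance determines the coefficient as $\operatorname{sgn}(q)$ times the coefficient of $e$.

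Since $c_\lambda x c_\lambda$ satisfies this invariance for every $x$, the lemma gives $c_\lambda x c_\lambda \in \mathbb{K}c_\lambda$. In particular $c_\lambda^2=n_\lambda c_\lambda$ for some scalar $n_\lambda$. To see that $n_\lambda\neq 0$, compute the trace of right multiplication by $c_\lambda$ on $\mathbb{K}[S_n]$ in two ways. Directly, it equals $n!$, the coefficient of $e$ being $1$. Via the near-idempotence, it equals $n_\lambda\dim U_\lambda$. Hence $n_\lambda = n!/\dim U_\lambda\neq 0$, which uses $\operatorname{char}\mathbb{K}=0$. Thus $e_\lambda := c_\lambda/n_\lambda$ is an idempotent and $U_\lambda=\mathbb{K}[S_n]e_\lambda$. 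By semisimplicity, $\operatorname{End}_{S_n}(U_\lambda)\cong e_\lambda\mathbb{K}[S_n]e_\lambda$, and the lemma shows this ring is $\mathbb{K}e_\lambda$, which is one-dimensional. A semisimple module with one-dimensional endomorphism ring is irreducible.

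For non-isomorphism, order partitions lexicographically and suppose $\lambda>\mu$. A similar pigeonhole argument shows that for every $g$ there is a transposition $t\in P_\lambda$ with $g^{-1}tg\in Q_\mu$. It follows that $a_\lambda\, g\, b_\mu=0$, hence $c_\lambda\,\mathbb{K}[S_n]\,c_\mu=0$. Now
\[
\operatorname{Hom}_{S_n}(U_\lambda,U_\mu)\cong e_\lambda\mathbb{K}[S_n]e_\mu ,
\]
and this space vanishes, so $U_\lambda\not\cong U_\mu$. Finally, the number of isomorphism classes of irreducible representations equals the number of conjugacy classes of $S_n$. This is read off Proposition~\ref{prop:621} by comparing the dimension of the center with the number of matrix blocks. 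The conjugacy classes of $S_n$ correspond to cycle types, i.e.\ to partitions of $n$. The $U_\lambda$ are therefore pairwise non-isomorphic and exhaust all irreducibles, and the partition attached to each irreducible is unique.

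The main obstacle will be the two combinatorial pigeonhole lemmas. The first is the characterization of $P_\lambda Q_\lambda$ via transpositions. The second is the vanishing $a_\lambda g b_\mu=0$ for $\lambda>\mu$, where the tableau bookkeeping is the delicate part. I would handle both with one argument: compare row contents of one tableau with column contents of the other, working row by row.
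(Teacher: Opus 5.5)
Your proposal is correct and follows essentially the same route as the proof of Theorem 4.36 in \cite{etingof2011introductionrepresentationtheory}, which the paper cites without reproducing. The shared steps are: the transposition argument giving $a_\lambda x b_\lambda\in\mathbb{K}c_\lambda$; the trace of right multiplication giving $c_\lambda^2=\frac{n!}{\dim U_\lambda}c_\lambda$; the vanishing $a_\lambda\mathbb{K}[S_n]b_\mu=0$ for $\lambda>\mu$; the identification $\operatorname{Hom}_{S_n}(U_\lambda,U_\mu)\cong c_\lambda\mathbb{K}[S_n]c_\mu$; and the count of conjugacy classes.

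One small point: the paper already uses $e_\lambda$ for the central idempotent projecting onto the $\lambda$-isotypic component. Your $c_\lambda/n_\lambda$ is a primitive and in general non-central idempotent, so give it a different name.
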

Since \(\mathbb{K}[S_n]\) is a semisimple algebra, it decomposes as a direct sum of matrix algebras corresponding to the irreducible representations of \(S_n\), as in Proposition~\ref{prop:621}:
$$ \mathbb{K}[S_n] \cong \bigoplus_{\lambda \vdash n} \operatorname{End}(U_\lambda),$$
where the sum ranges over all partitions \(\lambda \vdash n\), and each \(U_\lambda\) is an irreducible representation of \(S_n\) over \(\mathbb{K}\).

This decomposition corresponds to a decomposition of the identity element into a sum of orthogonal central idempotents:
\[
1 = \sum_{\lambda \vdash n} e_\lambda,
\]
where each \(e_\lambda \in \mathbb{K}[S_n]\) is a central idempotent satisfying $\mathbb{K}[S_n] e_\lambda \cong \operatorname{End}(U_\lambda).$

Each idempotent \( e_\lambda \in \mathbb{K}[S_4] \) projects onto the \( \lambda \)-isotypic component and is given by
\begin{equation*}~\label{eqchformula}
e_\lambda = \frac{\dim U_{\lambda}}{4!} \sum_{g \in S_4} \chi_\lambda(g) \cdot g,
\end{equation*}
where \( \chi_\lambda(g) \) denotes the character of \( g \) in the irreducible representation labeled by \( \lambda \). The values \( \chi_\lambda(g) \) are taken from the character table of \( S_4 \)~\cite[page 19]{fulton1991representation}.

In particular, each central idempotent \(e_\lambda\) acts as a projection onto the \(\lambda\)-isotypic component in \(\mathbb{K}[S_n]\)-module \(V\). More precisely, the image of \(e_\lambda\) in \(V\) is given by
\[
e_\lambda V = \operatorname{Hom}_{S_n}(U_\lambda, V) \otimes U_\lambda,
\]
which corresponds to the multiplicity space of the irreducible representation \(U_\lambda\) within \(V\).

To further analyze the superpotential \( w \in V^{\otimes 4} \), we consider the natural action of the symmetric group \( S_4 \) on \( V^{\otimes 4} \) by permuting tensor positions. Since this action commutes with the diagonal \( \mathrm{GL}(V) \)-action~\cite[page 76]{fulton1991representation}, the space decomposes into isotypic components corresponding to irreducible representations of \( S_4 \):
$$ w = \sum_{\lambda \vdash 4} w_\lambda, \quad \text{where} \quad w_\lambda = e_\lambda \cdot w.$$
To demonstrate the decomposition of the superpotential $w$ into isotypic components, we present some examples below.
All computations are done using Magma~\cite{vishal2025}.

\begin{example}
For the polynomial ring \( A = \mathbb{K}[x_1,x_2,x_3,x_4] \), the superpotential \( w \in V^{\otimes 4} \), as in Example~\ref{superpolyring}, lies entirely in the alternating component of \( V^{\otimes 4} \).

We project \( w \) onto the isotypic components of \( V^{\otimes 4} \) corresponding to the irreducible representations of \( S_4 \). Denoting \( w_\lambda := e_\lambda \cdot w \), we find:
\[
w = w_{(1,1,1,1)} \qquad \text{and} \qquad w_\lambda = 0 \quad \text{for all } \lambda \ne (1,1,1,1).
\]
That is, the superpotential decomposes as
\[
w = e_{(4)} \cdot 0 + e_{(3,1)} \cdot 0 + e_{(2,2)} \cdot 0 + e_{(2,1,1)} \cdot 0 + e_{(1,1,1,1)} \cdot w.
\]
\end{example}

\begin{example}
Consider the Clifford algebra~\cite[Section 1.1]{Cassidy2010GeneralizationsOG}. In this case all components vanish except the trivial component:
\[
w = w_{(4)} \qquad \text{and} \qquad w_\lambda = 0 \quad \text{for all } \lambda \ne (4).
\]
That is, the superpotential decomposes as
\[
w = e_{(4)} \cdot w + e_{(3,1)} \cdot 0 + e_{(2,2)} \cdot 0 + e_{(2,1,1)} \cdot 0 + e_{(1,1,1,1)} \cdot 0.
\]
\end{example}

\begin{theorem}
[\textbf{Schur-Weyl Duality}, {\cite[Corollary 4.59]{etingof2011introductionrepresentationtheory}}]
\label{schurweylduality}
Let \( V \) be a finite-dimensional vector space over a field \( \mathbb{K} \). Then, as a representation of \( \GL(V) \times S_n \), there is a decomposition
\[
V^{\otimes n} \simeq \bigoplus_{|\lambda| = n} \mathbb{S}_\lambda V \otimes U_\lambda,
\]
where
\begin{itemize}
    \item \( \lambda \) ranges over all partitions of \( n \),
    \item \( U_\lambda \) is the irreducible representation of the symmetric group \( S_n \) corresponding to \( \lambda \),
    \item \( \mathbb{S}_\lambda V := \Hom_{S_n}(U_\lambda, V^{\otimes n}) \) is the multiplicity space, carrying an irreducible representation of \( \GL(V) \).
\end{itemize}  
\end{theorem}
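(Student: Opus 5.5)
The plan is to derive the decomposition from the double centralizer theorem, applied to the semisimple algebra $\mathbb{K}[S_n]$ acting on $W := V^{\otimes n}$. Throughout we work, as the paper does, over $\mathbb{K}$ algebraically closed of characteristic zero; then $\operatorname{char}(\mathbb{K}) \nmid n!$.

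\textbf{Step 1: isotypic decomposition under $S_n$.} By Proposition~\ref{prop:621}, $\mathbb{K}[S_n]$ is semisimple. By Theorem~\ref{thm:641ksn}, its irreducible modules are exactly the $U_\lambda$ with $\lambda \vdash n$. Hence $W$ decomposes as
\[
W \cong \bigoplus_{\lambda \vdash n} \Hom_{S_n}(U_\lambda, W) \otimes U_\lambda,
\]
where the isomorphism is the evaluation map $\phi \otimes u \mapsto \phi(u)$. This map is an isomorphism onto the isotypic component $e_\lambda W$, by Schur's lemma together with complete reducibility. Setting $\mathbb{S}_\lambda V := \Hom_{S_n}(U_\lambda, W)$, this already gives the decomposition as vector spaces and as $S_n$-modules.

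\textbf{Step 2: the $\GL(V)$-action and equivariance.} The diagonal $\GL(V)$-action on $W$ commutes with place permutations. Therefore $\GL(V)$ acts on each $\Hom_{S_n}(U_\lambda, W)$ by post-composition, and the evaluation map is $\GL(V)\times S_n$-equivariant.

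\textbf{Step 3: irreducibility of $\mathbb{S}_\lambda V$.} This is the main obstacle, and I would attack it through the double centralizer theorem. Let $B := \End_{S_n}(W)$. Since $W$ is a semisimple module over $A := $ the image of $\mathbb{K}[S_n]$ in $\End(W)$, the double centralizer theorem says:
\begin{itemize}
\item $B$ is semisimple;
\item the nonzero spaces $\Hom_{S_n}(U_\lambda, W)$ are pairwise non-isomorphic simple $B$-modules.
\end{itemize}
To see the second point: $B \cong \prod_\lambda \End(\Hom_{S_n}(U_\lambda, W))$, and each factor acts irreducibly on its own multiplicity space.

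It then remains to show that the image of $\GL(V)$, or equivalently of its linear span, equals $B$. First, $\End_{S_n}(W) = (\End V)^{\otimes n}$ restricted to the $S_n$-invariants, which is $\operatorname{Sym}^n(\End V)$. Second, in characteristic zero, $\operatorname{Sym}^n(U)$ is spanned by the pure powers $X^{\otimes n}$ with $X \in U$. This follows from polarization, since
\[
x_1 \cdots x_n = \frac{1}{n!} \sum_{\emptyset \ne S \subseteq \{1,\dots,n\}} (-1)^{n-|S|} \Bigl(\sum_{i\in S} x_i\Bigr)^n .
\]
Third, $\GL(V)$ is Zariski dense in $\End V$. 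So the span of $\{g^{\otimes n} : g \in \GL(V)\}$ equals the span of $\{X^{\otimes n} : X \in \End V\}$, which is $B$. Consequently each nonzero $\mathbb{S}_\lambda V$ is an irreducible $\GL(V)$-representation, and distinct $\lambda$ give non-isomorphic ones.

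The delicate points are exactly the polarization identity, which needs $n!$ invertible, and the density argument. I would verify these carefully, since the rest is formal semisimple algebra. Finally, I would note that $\mathbb{S}_\lambda V = 0$ precisely when $\lambda$ has more than $\dim V$ rows. This is consistent with the statement, which includes such $\lambda$ trivially.
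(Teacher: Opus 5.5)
Your proof is correct and follows essentially the same route as the proof the paper relies on. The paper gives no argument of its own and simply cites Etingof et al., Corollary~4.59, which rests on the same two ingredients: the double centralizer theorem for the semisimple algebra $\mathbb{K}[S_n]$ acting on $V^{\otimes n}$, and the characteristic-zero polarization lemma saying that $\operatorname{Sym}^n(\End V)$ is spanned by pure powers, together with replacing these by $g^{\otimes n}$ for $g \in \GL(V)$.

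One point in your favour: you prove irreducibility only for the nonzero $\mathbb{S}_\lambda V$, which is the accurate form of the statement; as written, it overclaims when $\lambda$ has more than $\dim V$ parts. Your closing claim that $\mathbb{S}_\lambda V$ vanishes \emph{precisely} in that case is asserted rather than proved, but it is not needed for the statement.
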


Moreover, \( \mathbb{S}_\lambda V = 0 \) whenever the partition \( \lambda \) has more than \( d \) parts (i.e., when the \((d+1)\)-th part of \( \lambda \) is nonzero), where $d = \dim V.$

\begin{remark}
Schur-Weyl duality does not require $\dim V = n$ in general; in our case, $\dim V = n = 4$, so all partitions of 4 are allowed and no Schur functor vanishes.
\end{remark}

\begin{example}
The partitions of \(2\) are \((2)\) and \((1,1)\). By Schur-Weyl duality, we have
$V^{\otimes 2} \simeq \mathrm{Sym}^2 V \oplus \bigwedge^2 V.$

\end{example}

\begin{example}
The partitions of \(3\) are \((3)\), \((2,1)\), and \((1,1,1)\). By Schur-Weyl duality,
$ V^{\otimes 3} \simeq \mathrm{Sym}^3 V \oplus (\mathbb{S}_{(2,1)} V)^{\oplus 2} \oplus \bigwedge^3 V.$
The representation \(\mathbb{S}_{(2,1)} V\) does not admit as simple a description as the symmetric or alternating powers. It can be shown that~\cite[page 76]{fulton1991representation}:
\[
\mathbb{S}_{(2,1)} V \simeq V^{\otimes 3} c_{(2,1)} \cong \ker \bigl( V \otimes \bigwedge^2 V \xrightarrow{\varphi} \bigwedge^3 V \bigr),
\]
where the map \(\varphi\) is the alternating map
\[
\varphi : V \otimes \bigwedge^2 V \to \bigwedge^3 V, \quad \varphi(v_1 \otimes (v_2 \bigwedge v_3)) = v_1 \bigwedge v_2 \bigwedge v_3.
\]
\end{example}

\begin{example}
The partitions of \(4\) are:
\[
(4), \quad (3,1), \quad (2,2), \quad (2,1,1), \quad (1,1,1,1).
\]

Schur-Weyl duality yields the decomposition:
\[
\begin{aligned}
V^{\otimes 4} \simeq &\ \mathbb{S}_{(4)} V \otimes U_{(4)} \oplus \mathbb{S}_{(3,1)} V \otimes U_{(3,1)} \oplus \mathbb{S}_{(2,2)} V \otimes U_{(2,2)} \\
&\oplus \mathbb{S}_{(2,1,1)} V \otimes U_{(2,1,1)} \oplus \mathbb{S}_{(1,1,1,1)} V \otimes U_{(1,1,1,1)} \\
 \simeq &\ \mathrm{Sym^4} V  \oplus (\mathbb{S}_{(3,1)} V \otimes U_{(3,1)}) \oplus (\mathbb{S}_{(2,2)} V \otimes U_{(2,2)}) \\
&\oplus (\mathbb{S}_{(2,1,1)} V \otimes U_{(2,1,1)}) \oplus 
\bigwedge^4 V. 
\end{aligned}
\]
\end{example}

\begin{definition}\label{def:partial-flag-variety}
(\textbf{Partial Flag Variety}, {\cite[Definition 9.5.1]{lakshmibai2009flag}})
Let \( d = (d_1 < d_2 < \cdots < d_r) \) be a strictly increasing sequence of integers with \( d_r < n \). The \textbf{partial flag variety}
\[
F(d; n) = F(d_1, d_2, \ldots, d_r; n)
\]
is defined as the set of \( r \)-tuples of subspaces
\[
(W_{d_1}, W_{d_2}, \ldots, W_{d_r})
\]
satisfying
\[
W_{d_1} \subset W_{d_2} \subset \cdots \subset W_{d_r} \subset \mathbb{K}^n,
\]
with \(\dim W_{d_i} = d_i\) for each \( i \). This variety embeds into the product of Grassmannians
\[
F(d; n) \subset \Gr(d_1, n) \times \Gr(d_2, n) \times \cdots \times \Gr(d_r, n).
\]
The \textbf{full} or \textbf{complete flag variety} \( F(n) \) corresponds to the special case
\[
r = n - 1 \quad \text{and} \quad d = (1 < 2 < \cdots < n - 1),
\]
whose points are called complete flags.
\end{definition}
Note that, by \cite[Example 9.5.2]{lakshmibai2009flag}, the partial flag variety 
$F(d;n)$ is a closed subvariety of 
$\Gr(d_1,n) \times \cdots \times \Gr(d_r,n)$; hence it is projective.

\begin{proposition}\cite[Proposition 2.1]{friedman2025grassmannflagvarietieslinear}
Let \( d = (d_1 < d_2 < \cdots < d_r) \) be a strictly increasing tuple and define \( d_0 := 0 \). Then the partial flag variety \( F(d; n) \) is irreducible and has dimension
\[
\dim F(d; n) = \sum_{i=1}^r (d_i - d_{i-1})(n - d_i).
\]
In particular, the full flag variety \( F(n) = F(1, 2, \ldots, n-1; n) \) has dimension
\[
\dim F(n) = \binom{n}{2}.
\]
\end{proposition}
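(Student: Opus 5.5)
We argue by induction on $r$, building $F(d;n)$ one subspace at a time; the dimension formula and irreducibility then come out together. For $r=1$, $F(d_1;n)=\Gr(d_1,n)$. It is covered by the standard affine charts $\mathbb{A}^{d_1(n-d_1)}$: a $d_1$-plane complementary to a fixed coordinate $(n-d_1)$-plane is the graph of a linear map $\mathbb{K}^{d_1}\to\mathbb{K}^{n-d_1}$. Any two of these charts intersect, since a generic subspace is complementary to both coordinate subspaces. A variety covered by irreducible opens that pairwise meet is irreducible. So $\Gr(d_1,n)$ is irreducible of dimension $d_1(n-d_1)=(d_1-d_0)(n-d_1)$.

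For the inductive step, consider the map forgetting the smallest subspace,
\[
\pi: F(d_1,\dots,d_r;n)\to F(d_2,\dots,d_r;n),\qquad (W_{d_1},\dots,W_{d_r})\mapsto (W_{d_2},\dots,W_{d_r}).
\]
Its fibre over a point is $\Gr(d_1,W_{d_2})\cong\Gr(d_1,d_2)$. It is more convenient to peel off the largest subspace instead, via the map to $\Gr(d_r,n)$, whose fibre is $F(d_1,\dots,d_{r-1};d_r)$. By induction (applied with ambient dimension $d_r$) this fibre is irreducible of dimension
\[
\sum_{i=1}^{r-1}(d_i-d_{i-1})(d_r-d_i).
\]

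Next we show that this map is Zariski-locally trivial. Over the standard chart $U\subset\Gr(d_r,n)$, where $W_{d_r}$ is the graph of $A\in\mathrm{Hom}(\mathbb{K}^{d_r},\mathbb{K}^{n-d_r})$, the map $\mathbb{K}^{d_r}\to W_{d_r}$, $v\mapsto(v,Av)$, is an isomorphism varying algebraically with $A$. Transporting flags along it gives
\[
\pi^{-1}(U)\cong U\times F(d_1,\dots,d_{r-1};d_r).
\]
Establishing this local triviality is the only step needing care; everything else is bookkeeping. Irreducibility of the total space then follows from a standard fact: a variety that is locally a product of irreducible varieties over an irreducible base is irreducible. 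Concretely, each $\pi^{-1}(U)$ is irreducible, and any two of them meet because the base charts meet. Dimensions add, so
\[
\dim F(d;n)=d_r(n-d_r)+\sum_{i=1}^{r-1}(d_i-d_{i-1})(d_r-d_i).
\]

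It remains to match this with $\sum_{i=1}^r(d_i-d_{i-1})(n-d_i)$. The difference between the target and the expression above is
\[
\sum_{i=1}^{r-1}(d_i-d_{i-1})(n-d_r)+(d_r-d_{r-1})(n-d_r)-d_r(n-d_r).
\]
The first two terms telescope to $d_r(n-d_r)$, so the difference is $0$ and the formula holds. For the full flag variety, $d_i=i$ with $r=n-1$, so
\[
\dim F(n)=\sum_{i=1}^{n-1}(n-i)=\binom{n}{2}.
\]
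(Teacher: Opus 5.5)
Your proof is correct. The paper has no proof of its own to compare with: the proposition is quoted from \cite{friedman2025grassmannflagvarietieslinear} without argument. Your induction therefore stands as a self-contained proof.

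The following steps all check out:
\begin{itemize}
\item the pairwise-meeting chart argument for $\Gr(d_1,n)$;
\item the identification of the fibre of $F(d;n)\to\Gr(d_r,n)$ with $F(d_1,\dots,d_{r-1};d_r)$;
\item the trivialization over graph charts;
\item the irreducibility of $U\times F(d_1,\dots,d_{r-1};d_r)$, which uses that $\mathbb{K}$ is algebraically closed;
\item the fact that the opens $\pi^{-1}(U)$ pairwise meet, since the base charts meet and the fibres are nonempty;
\item the telescoping identity.
\end{itemize}
For the trivialization, it is worth saying explicitly what the inverse map is. It is induced by the coordinate projection $\mathbb{K}^n\to\mathbb{K}^{d_r}$, which is an isomorphism on $W_{d_r}$ and acts polynomially on Pl\"ucker coordinates, so both directions are morphisms.

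Two other standard routes are worth knowing.
\begin{itemize}
\item \emph{Forgetting the largest subspace.} The map $F(d_1,\dots,d_r;n)\to F(d_1,\dots,d_{r-1};n)$ has fibre $\Gr(d_r-d_{r-1},\,n-d_{r-1})$, of dimension $(d_r-d_{r-1})(n-d_r)$. The formula then appears term by term, with $n$ fixed and no telescoping. The trivialization is equally easy: identify $\mathbb{K}^n/W_{d_{r-1}}$ with a fixed coordinate complement over a chart of $\Gr(d_{r-1},n)$.
\item \emph{Group action.} $\GL_n$ acts transitively on $F(d;n)$, with stabilizer the block upper-triangular parabolic $P_d$. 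Irreducibility is then immediate from connectedness of $\GL_n$. The dimension is $n^2-\dim P_d=\sum_{i<j}m_im_j$, with $m_i=d_i-d_{i-1}$ and $d_{r+1}=n$, which equals $\sum_{i=1}^r m_i(n-d_i)$. This is shorter, but it relies on the fibre-dimension theorem for the orbit map. Your argument uses only charts and products.
\end{itemize}

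One cosmetic point: you first define $\pi$ as the map forgetting the smallest subspace, then write $\pi^{-1}(U)$ for the map to $\Gr(d_r,n)$. Rename one of them.
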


\begin{definition}
Let \( V \) be a finite-dimensional vector space over a field \( \mathbb{K} \), and fix a basis of \( V \). Then the subgroup \( B \subset \mathrm{GL}(V) \) consisting of all invertible upper-triangular matrices with respect to this basis is called a \textbf{Borel subgroup}.
The subgroup \( T \subset \mathrm{GL}(V) \) consisting of all invertible diagonal matrices with respect to the same basis is called a \textbf{Maximal torus}.
\end{definition}

\begin{definition}
\textbf{Weight vector}:
A nonzero vector \( v \in V \) is called a weight vector if it is an eigenvector for all elements \( t \in T \); that is, for each \( t \in T \), there exists a scalar \( \chi(t) \in \mathbb{K}^\times \) such that
\[
t \cdot v = \chi(t) v.
\]
The map \( \chi : T \to \mathbb{K}^\times \) is called the weight of \( v \).
\end{definition}

\begin{definition}
\textbf{Highest weight vector}:
Let \( B \subset \GL(V) \) be a Borel subgroup containing the maximal torus \( T \subset B \). A nonzero vector \( v \in V \) is called a highest weight vector if for all \( b \in B \),
\[
b \cdot v = \mu(b) v,
\]
where \( \mu : B \to \mathbb{K}^\times \) is a group homomorphism (i.e., a character of \( B \)).

In particular, since \( T \subset B \), the vector \( v \) is a weight vector with weight given by the restriction of \( \mu \) to \( T \). The map \( \mu \) is called the highest weight associated to \( v \).
\end{definition}

\begin{definition}
\textbf{Dominant weight}:
A weight 
\[
\lambda = (\lambda_1, \lambda_2, \ldots, \lambda_n) \in \mathbb{Z}^n
\]
is called dominant (w.r.t the Borel subgroup of $\GL_{n}$) if
\[
\lambda_1 \geq \lambda_2 \geq \ldots \geq \lambda_n.
\]
\end{definition}

\begin{definition}
\textbf{Difference tuple} \(\mathbf{d_\lambda}\):  
Let \(\lambda = (\lambda_1, \lambda_2, \ldots, \lambda_k)\) be a dominant weight.
We extend \(\lambda\) by zeros to length \( n \):
\[
\lambda = (\lambda_1, \lambda_2, \ldots, \lambda_k, 0, \ldots, 0).
\]
Such a dominant weight \(\lambda\) corresponds to a partition of length at most \( n \). The partition can be identified with the Young diagram having rows of lengths \(\lambda_1, \lambda_2, \ldots, \lambda_n\).

Define the difference tuple \( d_\lambda \) associated to \(\lambda\) as
\[
d_\lambda = (\lambda_1 - \lambda_2, \lambda_2 - \lambda_3, \ldots, \lambda_{n-1} - \lambda_n),
\]
which is a tuple of non-negative integers since \(\lambda\) is dominant.
\end{definition}

\begin{example}\label{notations-lambda-}
We now provide examples of partitions of 4 and their corresponding difference tuples.

\begin{center}
\renewcommand{\arraystretch}{1.2}
\begin{tabular}{|c|c|c|}
\hline
Partition \( \lambda \) & Extended \( \lambda \) (length 4) & Difference tuple \( d_\lambda \) \\
\hline
\( (4) \) & \( (4,0,0,0) \) & \( (4,0,0) \) \\
\( (3,1) \) & \( (3,1,0,0) \) & \( (2,1,0) \) \\
\( (2,2) \) & \( (2,2,0,0) \) & \( (0,2,0) \) \\
\( (2,1,1) \) & \( (2,1,1,0) \) & \( (1,0,1) \) \\
\( (1,1,1,1) \) & \( (1,1,1,1) \) & \( (0,0,0) \) \\
\hline
\end{tabular}
\end{center}
\end{example}


Let $V$ be an $n$-dimensional vector space over a field $\mathbb{K}$, $G = \GL(V)$, and consider the flag variety $G/B$.

\begin{theorem}[\textbf{Borel-Weil}, {\cite[Proposition 11.2.2]{lakshmibai2009flag}}]\label{borelweilthm}

Let \( \lambda = (\lambda_1, \ldots, \lambda_k) \) be a dominant weight with difference tuple $d_\lambda.$ Then
$$\mathrm{H}^0(G/B, L_{\lambda}) \cong \mathbb{S}_{\lambda}(V),$$
where \( \mathbb{S}_\lambda(V) \) denotes the irreducible representation of \( \GL(V) \) with highest weight \( \lambda \) (equivalently, with difference tuple \( d_\lambda \)).

\end{theorem}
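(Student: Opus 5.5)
To prove Theorem~\ref{borelweilthm}, I will realize the line bundle concretely and compute its sections by reducing to the Peter--Weyl-type decomposition of the coordinate ring of $G$. Set $G=\GL(V)$ with the Borel $B$ of upper-triangular matrices and torus $T$. Extend $\lambda$ to a character of $B$ through $B\to T$. Define $L_\lambda = G\times^B \mathbb{K}_{-\lambda}$; the sign is chosen so that sections are nonzero exactly for dominant $\lambda$. Its global sections are then
\[
\mathrm{H}^0(G/B,L_\lambda)=\{f\in\mathbb{K}[G] : f(gb)=\lambda(b)^{-1}f(g)\ \text{for all } b\in B\},
\]
with $G$ acting by left translation.

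\textbf{Step 1: reduce to a multiplicity count.} By the algebraic Peter--Weyl theorem, $\mathbb{K}[G]\cong\bigoplus_\mu \mathbb{S}_\mu(V)^*\otimes \mathbb{S}_\mu(V)$ as a $G\times G$-module, using complete reducibility in characteristic zero. The sum runs over dominant $\mu\in\mathbb{Z}^n$, with negative entries allowed via powers of $\det$. Taking $B$-semi-invariants for the right action therefore gives
\[
\mathrm{H}^0(G/B,L_\lambda)\cong\bigoplus_\mu \mathbb{S}_\mu(V)\otimes\bigl(\mathbb{S}_\mu(V)^*\bigr)^{(B,\lambda^{-1})},
\]
where the second factor is the space of vectors on which $B$ acts by the character $\lambda^{-1}$. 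Up to the convention for which side carries the dual, it suffices to show that the $B$-eigenspace with character $\lambda$ in an irreducible $\mathbb{S}_\mu(V)$ is one-dimensional when $\mu=\lambda$ and zero otherwise.

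\textbf{Step 2: uniqueness of the highest weight line.} A $B$-eigenvector is killed by the unipotent radical $U$, so it is a $U$-invariant weight vector. I will argue that in an irreducible representation the space $\mathbb{S}_\mu(V)^U$ is one-dimensional and spanned by the highest weight vector. This follows from the big-cell density of $U^-TU$: the $G$-span of a $U$-fixed weight vector $v$ equals the $U^-$-span of $v$. That span contains the weight of $v$ only in the line $\mathbb{K}v$, and all its other weights are lower. By irreducibility this span is everything, so $v$ has the unique maximal weight $\mu$. Hence the character is $\mu$, and $\mu=\lambda$ is forced.

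\textbf{Step 3: identify with the Schur functor.} Finally, $\mathbb{S}_\lambda V$ as defined via Schur--Weyl duality (Theorem~\ref{schurweylduality}) is irreducible. Its highest weight is $\lambda$: the Young symmetrizer image contains $e_1^{\otimes\lambda_1}\wedge\cdots$, a $U$-invariant vector of weight $\lambda$. Irreducibles are determined by their highest weight, which gives the identification. The difference tuple $d_\lambda$ only records which partial flag variety $L_\lambda$ descends to, namely the one obtained by forgetting the steps $i$ with $d_i=0$; this does not affect the section count.

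I expect the main obstacle to be the bookkeeping of dual conventions and signs in Step~1, namely whether one obtains $\mathbb{S}_\lambda V$ or its dual. I would settle this by testing the case $\lambda=(1,0,\dots,0)$. There $G/B\to\mathbb{P}(V)$, and the sections must come out to $V$, matching whichever projectivization convention the paper uses for $X_{400}$ as quartics. Once the convention is fixed, the remaining steps are standard.
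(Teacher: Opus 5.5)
The paper gives no proof of this statement; it simply quotes it from Lakshmibai--Brown. Your route is therefore a genuinely different, self-contained argument: identify sections with $B$-semi-invariant functions on $G$, decompose $\mathbb{K}[G]$ by algebraic Peter--Weyl, and use uniqueness of the $U$-fixed line. Steps 2 and 3 are sound. Density of $U^-TU$ does give $\mathrm{span}(Gv)=\mathrm{span}(U^-v)$, hence $W^U=\mathbb{K}v$ with $v$ of the unique maximal weight. Over the citation, this buys an explicit $\GL(V)$-equivariant identification, which is what the proof of Theorem~\ref{maintheorem-inv} actually relies on. It also forces one to confront a duality convention that the paper's statement leaves implicit.

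However, the bookkeeping you defer is exactly where the argument, as written, fails. With $B$ upper triangular, your decomposition needs $(\mathbb{S}_\mu(V)^*)^{(B,\lambda^{-1})}$. A vector $\xi\in\mathbb{S}_\mu(V)^*$ with $b\xi=\lambda(b)^{-1}\xi$ is precisely a $B$-equivariant map $\mathbb{S}_\mu(V)\to\mathbb{K}_\lambda$. Such a map factors through the $U$-coinvariants of $\mathbb{S}_\mu(V)$, which (applying your Step 2 to the dual) form the \emph{lowest} weight line, of weight $w_0\mu=(\mu_n,\dots,\mu_1)$.

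So your formula yields $\mathrm{H}^0(G/B,L_\lambda)\neq 0$ only for antidominant $\lambda$. It gives $0$ for every partition other than a multiple of $(1,\dots,1)$, contradicting your claim that the sign makes sections nonzero exactly for dominant $\lambda$. Replacing this by ``the $B$-eigenspace with character $\lambda$ in $\mathbb{S}_\mu(V)$'' is not a harmless choice of which side carries the dual. The space $\Hom_B(W,\mathbb{K}_\lambda)$ detects the lowest weight of $W$, while $\Hom_B(\mathbb{K}_\lambda,W)$ detects the highest. Also, under the usual convention $(g,z)\sim(gb,b^{-1}z)$, your $G\times^B\mathbb{K}_{-\lambda}$ has sections $f(gb)=\lambda(b)f(g)$, so your two descriptions of $L_\lambda$ do not agree.

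There are two consistent repairs.

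\emph{Keep $B$ upper triangular and use $f(gb)=\lambda(b)f(g)$.} Step 2 applied in $\mathbb{S}_\mu(V)^*$, whose highest weight is $-w_0\mu$, forces $\mu=-w_0\lambda$. Hence $\mathrm{H}^0\cong\mathbb{S}_{-w_0\lambda}(V)\cong\mathbb{S}_\lambda(V)^*$. Your own test case shows this: $\mathcal{O}(1)$ on the space of lines in $V$ has sections $V^*$, not $V$.

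\emph{Keep your sign but use the lower-triangular Borel $B^-$.} This gives $\mathbb{S}_\lambda(V)$ as stated. The $B^-$-eigenline of weight $-\lambda$ in $\mathbb{S}_\mu(V)^*$ is its lowest weight line (Step 2 with $U$ and $U^-$ interchanged), of weight $-\mu$, so $\mu=\lambda$. Equivalently, regard $G/B$ as flags in $V^*$, so that $L_{(1,0,\dots,0)}$ is pulled back from the space of lines in $V^*$. This is the reading under which the paper's statement holds. It also matches the paper's use of $x_1,\dots,x_4\in V$ as homogeneous coordinates on $\mathbb{P}^3$.
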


By Schur-Weyl duality~\ref{schurweylduality}, we have the decomposition
\[
V^{\otimes n} \cong \bigoplus_{\lambda \vdash n} \mathbb{S}_\lambda(V) \otimes U_\lambda.
\]
By Borel-Weil~\ref{borelweilthm}, each irreducible summand \( \mathbb{S}_\lambda(V) \) can be realized as the space of global sections of the line bundle $L_\lambda$ on the flag variety \( G/B \) as
\[
\mathbb{S}_\lambda(V) \cong \mathrm{H}^0(G/B, L_\lambda).
\]
Therefore, the decomposition of a superpotential \( w \in V^{\otimes n} \) can be expressed as
\[
w = \sum_{\lambda \vdash n} w_\lambda, \quad \text{with } w_\lambda \in \mathrm{H}^0(G/B, L_\lambda) \otimes U_\lambda.
\]

Each component \( w_\lambda \) corresponds to a morphism of vector spaces
\[
\widetilde{w}_\lambda : U_\lambda^* \to \mathrm{H}^0(G/B, L_\lambda),
\]
whose image is a linear subsystem
\[
\widetilde{w}_\lambda(U_\lambda^*) \subseteq \mathrm{H}^0(G/B, L_\lambda).
\]

\begin{definition}\label{def:Xlambda}
\textbf{Geometric Locus}:
We define the associated geometric locus as the base locus of this linear system:
\[
X_\lambda(w) := \mathrm{Base}\big(\widetilde{w}_\lambda(U_\lambda^*)\big) \subseteq G/B,
\]
i.e., the subset of points in \( G/B \) where all sections in \( \widetilde{w}_\lambda(U_\lambda^*) \) simultaneously vanish.
\end{definition}

\begin{proposition}[{\cite[p.~142]{Fulton_1996}}]
The Picard group \(\mathrm{Pic}(G/B)\) is the group of isomorphism classes of line bundles on \(G/B\). For the complete flag variety,
\[
\mathrm{Pic}(G/B) \cong \mathbb{Z}^{n-1},
\]
where each factor corresponds to the pullback of the hyperplane line bundle \(\mathcal{O}(1)\) from the corresponding projective space in the Pl\"ucker embedding.

For each $i = 1,\dots, n-1$, let $ \pi_i : G/B \longrightarrow \Gr(i,n)$
denote the projection to the $i$-dimensional subspace in the flag. 

Thus, any line bundle \(L_\lambda\) on \(G/B\) can be expressed uniquely as
\[
L_\lambda = \pi_1^* \mathcal{O}(d_1) \otimes \pi_2^* \mathcal{O}(d_2) \otimes \cdots \otimes \pi_{n-1}^* \mathcal{O}(d_{n-1}),
\]
where \((d_1, d_2, \dots, d_{n-1})\) is the difference tuple of the highest weight \(\lambda\). 
\end{proposition}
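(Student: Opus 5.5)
To prove the statement about the Picard group, I will reduce it to the classical description of $\mathrm{Pic}(G/B)$ through characters of $B$, and then identify the generators with Plücker pullbacks. The first step uses the fact that $G/B$ is a smooth projective homogeneous variety. Every line bundle on it is $G$-linearizable, since $G=\GL(V)$ is connected and, after passing to $\mathrm{SL}(V)$, simply connected. Each such bundle is therefore of the form $L_\chi = G\times^B \mathbb{K}_\chi$ for a character $\chi: B\to\mathbb{K}^\times$. Characters of $B$ factor through $B\to T$, so they are given by weights $\lambda\in\mathbb{Z}^n$. Twisting by a power of $\det$, the character $(1,\dots,1)$, gives the trivial bundle, because $\det$ extends to all of $G$. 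This yields a surjection $\mathbb{Z}^n/\mathbb{Z}(1,\dots,1)\cong\mathbb{Z}^{n-1}\to \mathrm{Pic}(G/B)$.

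The second step proves that this map is injective, which I expect to be the main obstacle. The cleanest route is to compute $\mathrm{Pic}$ directly from the Bruhat cell decomposition. The big cell $B^-B/B\cong\mathbb{A}^{\binom n2}$ has trivial Picard group, and its complement is the union of the $n-1$ Schubert divisors $D_1,\dots,D_{n-1}$. Hence $\mathrm{Pic}(G/B)$ is generated by the classes $[D_i]$, and there is an exact sequence
\[
\mathcal{O}(\text{cell})^\times/\mathbb{K}^\times \to \bigoplus_{i=1}^{n-1}\mathbb{Z}[D_i]\to \mathrm{Pic}(G/B)\to 0 .
\]
The first term vanishes because units on affine space are constants, so $\mathrm{Pic}(G/B)\cong\mathbb{Z}^{n-1}$ freely. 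The remaining work is to check that the divisor of a section of $L_\lambda$ matches the character description. This is the delicate point.

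The third step identifies generators. The divisor $D_i$ is the zero locus of the pulled-back Plücker coordinate $p_{\{n-i+1,\dots,n\}}$, or of the lowest coordinate under the opposite convention, on $\Gr(i,n)$. Hence $[D_i]=\pi_i^*\mathcal{O}(1)$, and these bundles form a basis. On the character side, $\pi_i^*\mathcal{O}(1)$ corresponds to the fundamental weight $\omega_i=(1,\dots,1,0,\dots,0)$ with $i$ ones: its sections are $(\bigwedge^i V)^*$, or $\bigwedge^i V$ with the dual convention used in Theorem~\ref{borelweilthm}.

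Finally, I write a dominant $\lambda$ modulo $(1,\dots,1)$, or take $\lambda_n=0$ as in the paper's partitions, in the form $\lambda=\sum_{i=1}^{n-1}(\lambda_i-\lambda_{i+1})\omega_i$. This gives
\[
L_\lambda\cong\bigotimes_i\pi_i^*\mathcal{O}(d_i)
\]
with $(d_i)=d_\lambda$, and uniqueness follows from the freeness established in the second step. For $n=4$ and $\lambda\vdash 4$, the resulting tuples can be compared directly with the table in Example~\ref{notations-lambda-}.
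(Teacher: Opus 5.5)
The paper gives no argument for this proposition: it is quoted as background, with only the pointer to Fulton. Your proposal is therefore an independent proof, and it is essentially correct. Your route is to write every line bundle as $L_\chi=G\times^B\mathbb{K}_\chi$ and then use the Bruhat big cell to show $\mathrm{Pic}(G/B)\cong\mathbb{Z}^{n-1}$, freely generated by the boundary divisors. This buys two things the citation leaves implicit.
\begin{itemize}
\item The expansion $\lambda=\sum_{i=1}^{n-1}(\lambda_i-\lambda_{i+1})\omega_i+\lambda_n(1,\dots,1)$ holds for every weight, not only dominant ones. So it genuinely covers ``any line bundle'', with negative $d_i$ allowed.
\item It forces an explicit dictionary between $\omega_i$ and $\pi_i^*\mathcal{O}(1)$. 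That is exactly where the paper's implicit dualization has to be made consistent: its Borel--Weil statement yields $\mathbb{S}_\lambda V$, whereas on flags of subspaces of $V$ the sections of $\pi_i^*\mathcal{O}(1)$ are $\bigwedge^i V^*$.
\end{itemize}

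Your Steps 1 and 2 overlap, and each suffices on its own.
\begin{itemize}
\item The injectivity you call the main obstacle follows at once from the exact sequence $\Hom(G,\mathbb{K}^\times)\to\mathrm{Pic}^G(G/B)\to\mathrm{Pic}(G/B)$. On a complete connected variety, a $G$-linearization of $\mathcal{O}$ is a character of $G$. The characters of $\GL(V)$ restrict to $\mathbb{Z}(1,\dots,1)$.
\item Conversely, once Step 2 shows $\mathrm{Pic}(G/B)$ is free of rank $n-1$, the surjection from $\mathbb{Z}^n/\mathbb{Z}(1,\dots,1)$ is automatically an isomorphism.
\end{itemize}

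For Step 3, identifying on fibers is cleaner than reading off sections. Let $E_i=\langle e_1,\dots,e_i\rangle$ be the standard flag fixed by $B$. Then $\pi_i^*\mathcal{O}(-1)$ has fiber $\bigwedge^i(gE_i)$ at $gB$, so it is $G\times^B\bigwedge^iE_i$, on which $B$ acts through $\omega_i$. Hence $\pi_i^*\mathcal{O}(1)$ is $L_{\pm\omega_i}$, the sign depending on your convention for $G\times^B\mathbb{K}_\chi$. This avoids leaning on Borel--Weil, which the paper itself phrases in terms of $L_\lambda$. Since $\omega_1,\dots,\omega_{n-1}$ is a basis of $\mathbb{Z}^n/\mathbb{Z}(1,\dots,1)$, it also gives the basis statement with no divisor computation.

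If you keep the divisor route, two details need fixing.
\begin{itemize}
\item For the cell $B^-B/B$ with $B$ upper triangular, $D_i$ is cut out by $p_{\{1,\dots,i\}}$. The coordinate $p_{\{n-i+1,\dots,n\}}$ goes with the cell $Bw_0B/B$ instead.
\item You must check that $\pi_i^*p$ vanishes to order exactly one along $D_i$. Otherwise you only get $\pi_i^*\mathcal{O}(1)=m_i[D_i]$, and the bundles $\pi_i^*\mathcal{O}(1)$ would span only a finite-index subgroup, which breaks your uniqueness claim.
\end{itemize}
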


\begin{proposition}\label{prop:flag-grassmannian-sections}
Let $\lambda \vdash n$ be a partition. Define the support of $\lambda$ as
\[
\mathrm{supp}(\lambda) = \{\, i \mid d_i \neq 0 \,\}.
\]  
Let $X_{\mathrm{supp}}$ be the image of $G/B$ in the product 
\[
\prod_{i \in \mathrm{supp}(\lambda)} \Gr(i,n),
\]
which is a partial flag variety(Definition~\ref{def:partial-flag-variety}).

Let
$ \pi_{\mathrm{supp}} : G/B \longrightarrow X_{\mathrm{supp}}$
be the natural projection. Then the global sections of $L_\lambda$ can be computed on the smaller partial flag variety:
\[
\mathrm{H}^0(G/B, L_\lambda) \;\cong\; \mathrm{H}^0\Big(X_{\mathrm{supp}}, \bigotimes_{i \in \mathrm{supp}(\lambda)} \mathcal{O}_{\Gr(i,n)}(d_i) \Big|_{X_{\mathrm{supp}}} \Big).
\]
\end{proposition}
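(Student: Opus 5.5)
The plan is to show that $L_\lambda$ is pulled back from $X_{\mathrm{supp}}$ along $\pi_{\mathrm{supp}}$, and then to show that pullback along $\pi_{\mathrm{supp}}$ induces an isomorphism on global sections. Write $M := \bigotimes_{i\in\mathrm{supp}(\lambda)} \mathcal{O}_{\Gr(i,n)}(d_i)\big|_{X_{\mathrm{supp}}}$.

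\textbf{Step 1: identify $L_\lambda$ as a pullback.} By the preceding proposition on $\mathrm{Pic}(G/B)$, we have $L_\lambda = \bigotimes_{i=1}^{n-1}\pi_i^*\mathcal{O}(d_i)$. If $d_i = 0$, then $\pi_i^*\mathcal{O}(0)$ is trivial, so the product runs only over $i\in\mathrm{supp}(\lambda)$. Each $\pi_i$ with $i\in\mathrm{supp}(\lambda)$ factors as $\mathrm{pr}_i\circ\pi_{\mathrm{supp}}$, where $\mathrm{pr}_i : X_{\mathrm{supp}}\to\Gr(i,n)$ is the restricted projection. Hence
\[
L_\lambda \cong \pi_{\mathrm{supp}}^* M .
\]

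\textbf{Step 2: compare global sections.} By the projection formula,
\[
\mathrm{H}^0(G/B,\pi_{\mathrm{supp}}^*M)\cong \mathrm{H}^0\big(X_{\mathrm{supp}}, M\otimes \pi_{\mathrm{supp}*}\mathcal{O}_{G/B}\big),
\]
so it suffices to prove $\pi_{\mathrm{supp}*}\mathcal{O}_{G/B}=\mathcal{O}_{X_{\mathrm{supp}}}$. The map $\pi_{\mathrm{supp}}$ is the natural $GL_n$-equivariant map $G/B\to G/P$, where $P\supseteq B$ is the parabolic subgroup stabilizing the standard partial flag of type $\mathrm{supp}(\lambda)$. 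This map is surjective, proper, and smooth. Its fibre is $P/B$, which is a product of full flag varieties of the blocks of $P$. In particular the fibre is connected, projective, and satisfies $\mathrm{H}^0(P/B,\mathcal{O})=\mathbb{K}$. Since $X_{\mathrm{supp}}$ is a smooth (hence normal) projective variety and $\pi_{\mathrm{supp}}$ is proper with connected fibres, Zariski's main theorem (Stein factorization) gives $\pi_{\mathrm{supp}*}\mathcal{O}_{G/B}=\mathcal{O}_{X_{\mathrm{supp}}}$. Alternatively, one can argue via flat base change together with $\mathrm{H}^0$ of the fibres being one-dimensional.

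\textbf{Step 3: conclude, and note the main obstacle.} Combining Steps 1 and 2 yields the stated isomorphism, which is compatible with the $GL(V)$-action. The main point to check carefully is the identification of $X_{\mathrm{supp}}$ with the homogeneous space $G/P$. This requires showing that the image of $G/B$ in the product of Grassmannians is exactly the partial flag variety $F(\mathrm{supp}(\lambda);n)$ of Definition~\ref{def:partial-flag-variety}, with its reduced smooth structure. Both sides are single $G$-orbits. Since $\operatorname{char}\mathbb{K}=0$, the orbit map $G/P\to X_{\mathrm{supp}}$ is a bijective morphism onto a normal variety with separable generic fibre, and is therefore an isomorphism.
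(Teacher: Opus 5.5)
Your proposal is correct and follows essentially the same route as the paper: you write $L_\lambda \cong \pi_{\mathrm{supp}}^*M$, apply the projection formula, and deduce $(\pi_{\mathrm{supp}})_*\mathcal{O}_{G/B}\cong\mathcal{O}_{X_{\mathrm{supp}}}$ from the fact that $\pi_{\mathrm{supp}}$ is proper with connected projective fibres. The paper just cites the Stacks Project for that last step, whereas you justify it yourself, either via normality of $X_{\mathrm{supp}}\cong G/P$ plus Stein factorization in characteristic zero, or via flatness plus cohomology and base change; this supplies hypotheses that the paper leaves implicit.
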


\begin{proof}
By the Picard group description of the flag variety, any line bundle on $G/B$ can be written uniquely as a tensor product of pullbacks of Plücker line bundles:
\[
L_\lambda \cong \bigotimes_{i=1}^{n-1} \pi_i^* \mathcal{O}_{\Gr(i,n)}(d_i).
\]
Let $X_{\mathrm{supp}}$ be the partial flag variety corresponding to the indices $i$ with $d_i \neq 0$, and let $\pi_{\mathrm{supp}}: G/B \to X_{\mathrm{supp}}$ be the natural projection. Then there exists a line bundle $M_\lambda$ on $X_{\mathrm{supp}}$ such that
\[
L_\lambda \cong \pi_{\mathrm{supp}}^* M_\lambda.
\]

Since $\pi_{\mathrm{supp}}$ has connected projective fibers, $(\pi_{\mathrm{supp}})_* \mathcal{O}_{G/B} \cong \mathcal{O}_{X_{\mathrm{supp}}}$;
using~\cite[Tag 0E0L]{stacks-project}. 
By the projection formula~\cite[Tag 01E8]{stacks-project},
\[
(\pi_{\mathrm{supp}})_* (\pi_{\mathrm{supp}}^* M_\lambda) \;\cong\; (\pi_{\mathrm{supp}})_* \mathcal{O}_{G/B} \otimes M_\lambda \;\cong\; M_\lambda.
\]
Taking global sections then gives the desired isomorphism:
\[
\mathrm{H}^0(G/B, L_\lambda) \;\cong\; \mathrm{H}^0(X_{\mathrm{supp}}, M_\lambda).
\]
\end{proof}

\begin{example}
For \( n = 4 \), the flag variety \( G/B \) can be described as
\[
X = G/B = \{ (p, l, \pi) \in \mathbb{P}^3 \times \mathrm{Gr}(2,4) \times \mathbb{P}^{3*} \mid p \in l \subseteq \pi \},
\]
where
\begin{itemize}
    \item \( p \in \mathbb{P}^3 \) is a point in projective 3-space,
    \item \( l \in \mathrm{Gr}(2,4) \) is a line (2-dimensional subspace) in \( \mathbb{K}^4 \),
    \item \( \pi \in \mathbb{P}^{3*} \) is a hyperplane in \( \mathbb{P}^3 \).
\end{itemize}

Line bundles on \( G/B \) are of the form
\[
L_{\lambda} = \pi_1^* \mathcal{O}(d_1) \otimes \pi_2^* \mathcal{O}(d_2) \otimes \pi_3^* \mathcal{O}(d_3),
\]
where the projections are
\[
\pi_1: G/B \to \mathbb{P}^3, \quad \pi_2: G/B \to \mathrm{Gr}(2,4), \quad \pi_3: G/B \to \mathbb{P}^{3*}.
\]

Let us consider the case \( d_1 = d_2 = 0 \), so that
\[
L_{\lambda} = \pi_3^* \mathcal{O}(d_3).
\]
Then the global sections satisfy
\[
\mathrm{H}^0(G/B, L_{\lambda}) = \mathrm{H}^0(G/B, \pi_3^* \mathcal{O}(d_3)) \cong \mathrm{H}^0\big(\mathbb{P}^{3*}, \pi_{3*} \pi_3^* \mathcal{O}(d_3)\big).
\]
By the projection formula
\[
\pi_{3*} \pi_3^* \mathcal{O}(d_3) \cong \pi_{3*} \mathcal{O}_{G/B} \otimes \mathcal{O}(d_3),
\]
so that
\[
\mathrm{H}^0(G/B, \pi_3^* \mathcal{O}(d_3)) \cong \mathrm{H}^0\big(\mathbb{P}^{3*}, \pi_{3*} \mathcal{O}_{G/B} \otimes \mathcal{O}(d_3)\big).
\]
Since the pushforward satisfies
\[
\pi_{3*} \mathcal{O}_{G/B} \cong \mathcal{O}_{\mathbb{P}^{3*}},
\]
it follows that
\[
\mathrm{H}^0(G/B, \pi_3^* \mathcal{O}(d_3)) \cong \mathrm{H}^0(\mathbb{P}^{3*}, \mathcal{O}(d_3)).
\] 
\end{example}

\begin{example}\label{example:flagvarietiesforpartition4}
The partitions of \(4\) are \((4), (3,1), (2,2), (2,1,1), (1,1,1,1)\). We now discuss each partition one by one.

\begin{enumerate}
  \item[(i)] {\textbf{\boldmath$\lambda = (4,0,0,0):$}} \\
  The highest weight is
  \[
  d_{(4,0,0,0)} = (4-0, 0-0, 0-0) = (4, 0, 0).
  \]
  The partial flag variety reduces to \(\mathbb{P}^3\), and the space of global sections is
  \[
  w_{(4,0,0,0)} \in \mathrm{H}^0\big(\mathbb{P}^3, \mathcal{O}(4)\big).
  \]

 \item[(ii)] {\textbf{\boldmath$\lambda = (3,1,0,0):$}} \\
  Here,
  \[
  d_{(3,1,0,0)} = (3-1, 1-0, 0-0) = (2,1,0).
  \]
  The partial flag variety embeds into
  \[
  F(1,2;4) \subset \mathbb{P}^3 \times \mathrm{Gr}(2,4).
  \]
  Let \(\pi_1 : \mathbb{P}^3 \times \mathrm{Gr}(2,4) \to \mathbb{P}^3\) and \(\pi_2 : \mathbb{P}^3 \times \mathrm{Gr}(2,4) \to \mathrm{Gr}(2,4)\) be the natural projections onto the first and second factors, respectively. Then the associated line bundle is
  \[
  L_{(3,1,0,0)} = \pi_1^* \mathcal{O}(2) \otimes \pi_2^* \mathcal{O}(1),
  \]
  and the global sections lie in
  \[
  w_{(3,1,0,0)} \in \mathrm{H}^0\big(F(1,2;4), \mathcal{O}(2,1)\big).
  \]

 \item[(iii)] {\textbf{\boldmath$\lambda = (2,2,0,0):$}} \\
  Then
  \[
  d_{(2,2,0,0)} = (2-2, 2-0, 0-0) = (0,2,0).
  \]
  The partial flag variety reduces to \(\mathrm{Gr}(2,4)\), and
  \[
  w_{(2,2,0,0)} \in \mathrm{H}^0\big(\mathrm{Gr}(2,4), \mathcal{O}(2)\big).
  \]

  \item[(iv)] {\textbf{\boldmath$\lambda = (2,1,1,0):$}} \\
  The highest weight is
  \[
  d_{(2,1,1,0)} = (2-1, 1-1, 1-0) = (1,0,1).
  \]
  The partial flag variety embeds into
  \[
  F(1,3;4) \subset \mathbb{P}^3 \times \mathbb{P}^{3*}.
  \]
  Let \(\pi_1 : \mathbb{P}^3 \times \mathbb{P}^{3*} \to \mathbb{P}^3\) and \(\pi_3 : \mathbb{P}^3 \times \mathbb{P}^{3*} \to \mathbb{P}^{3*}\) denote the natural projections onto the first and second factors, respectively. The associated line bundle is
  \[
  L_{(2,1,1,0)} = \pi_1^* \mathcal{O}(1) \otimes \pi_3^* \mathcal{O}(1).
  \]
  Thus, the space of global sections is
  \[
  w_{(2,1,1,0)} \in \mathrm{H}^0\big(F(1,3;4), \mathcal{O}(1,1)\big).
  \]

    \item[(v)] {\textbf{\boldmath$\lambda = (1,1,1,1):$}} \\
  The highest weight tuple is
  \[
  d_{(1,1,1,1)} = (1 - 1,\, 1 - 1,\, 1 - 1) = (0,0,0).
  \]
  The associated line bundle is trivial. Therefore,
  \[
  w_{(1,1,1,1)} \in \mathrm{H}^0(G/B, \mathcal{O}) \cong \mathbb{K}.
  \]
This component does not help distinguish between the algebras, so we do not include it in the tables.
\end{enumerate}
\end{example}

\section{COMPUTATION OF DISCRETE INVARIANTS}\label{sec:discrete-invariants}

In this section, we present some examples of the computation of discrete invariants using Magma. Some parts of the code are very long, so they are not written out in full. The complete code is available at~\cite{vishal2025}. 

Recall from Definition \ref{def:Xlambda} that the associated geometric locus is the base locus of this linear system:
\[
X_\lambda(w) := \mathrm{Base}\big(\widetilde{w}_\lambda(U_\lambda^*)\big) \subseteq G/B.
\]

This $X_\lambda(w)$ is a geometric invariant of the algebra. We then compute discrete invariants of \(X_\lambda(w)\) for all partitions \(\lambda \vdash 4\) to distinguish the algebras.

\begin{definition}
Let \( X_\lambda(w) \) be a closed subscheme defined by the vanishing of sections $\widetilde{w}_\lambda(U_\lambda^*)$ of a globally generated line bundle \( L_\lambda \) on the flag variety \( G/B \). There is a one-to-one correspondence
\[
  \left\{ \lambda \text{ a partition with at most } n \text{ parts} \right\}
  \quad \longleftrightarrow \quad
  \left\{ \text{effective line bundles } L_\lambda \text{ on } G/B \right\},
\]
which allows us to label each such subscheme by a partition \( \lambda \); see \cite[page 393]{fulton1991representation}.
\end{definition}



\subsection{Computation for Algebra H}
\begin{example}\label{discrete-inv-4-algH}
Consider \(w_\lambda\), the component of the superpotential for \(\lambda = (4)\). Using the Schur-Weyl duality~\ref{schurweylduality}, this component is obtained as
\[
w_{(4)} = e_{(4)} \cdot w \in \mathbb{S}_{(4)}(V) \otimes U_{(4)} \cong \mathrm{Sym}^4(V),
\]
where $e_{(4)}$ is the idempotent calculated using the character formula on page~\pageref{eqchformula}.

The partial flag variety \( G/B \) reduces to projective space $\mathbb{P}^3,$
and the line bundle \( L_{(4)} \) corresponds to \( \mathcal{O}(4) \), as in Example~\ref{example:flagvarietiesforpartition4}. Therefore, the component \( w_{(4)} \) defines a quartic hypersurface in \( \mathbb{P}^3 \), and the scheme
\[
X_{(4)}(w) \subseteq \mathbb{P}^3
\]
is precisely the vanishing locus of this quartic polynomial.

Next, we show the Magma code for Algebra H (as seen in Example~\ref{example:algebraH}). We construct the projective space over the rational function field $\mathbb{Q}(f,h)$. Throughout, $f$ and $h$ are taken to be generic. The code is as follows:

\begin{flushleft}
\texttt{QQ := RationalField();} \\
\texttt{K<f,h> := FunctionField(QQ, 2);} \\
\texttt{P<x1,x2,x3,x4> := ProjectiveSpace(K, 3);} \\
\texttt{X := Scheme(P, w(4));}
\end{flushleft}

We now compute the discrete invariants, listed below:
\begin{flushleft}
\texttt{// Primary Components of the scheme X } \\
\texttt{[[Dimension(p), Degree(p)] : p in PrimaryComponents(X)];}\\
\texttt{> [ [ 2, 2 ], [ 2, 2 ] ]}
\end{flushleft}

\begin{flushleft}
\texttt{// Primary Components of the reduced subscheme of X} \\
\texttt{[[Dimension(p), Degree(p)] : p in PrimaryComponents(ReducedSubscheme(X))];}\\
\texttt{> [ [ 2, 1 ], [ 2, 2 ] ]}
\end{flushleft}

\begin{flushleft}
\texttt{// Primary Components of the singular subscheme of X} \\
\texttt{[[Dimension(p), Degree(p)] : p in PrimaryComponents(SingularSubscheme(X))];}\\
\texttt{> [ [ 2, 1 ], [ 1, 4 ], [ 1, 1 ] ]}
\end{flushleft}

\begin{flushleft}
\texttt{// Primary Components of the reduced subscheme of the singular subscheme of X} \\
\texttt{[[Dimension(p), Degree(p)] : p in PrimaryComponents(ReducedSubscheme(SingularSubscheme(X)))];}\\
\texttt{> [ [ 2, 1 ], [ 1, 1 ] ]}
\end{flushleft}

\end{example}

When working with schemes over a field \( \mathbb{K} \), the decomposition into primary components over \( \mathbb{K} \) may not reflect the true geometric structure. In particular, these components may fail to capture irreducibility over the algebraic closure \( \overline{\mathbb{K}} \). To address this, we consider finite extensions of \( \mathbb{K} \), where the primary components become absolutely irreducible. 

\begin{definition}[\textbf{Base Change of a Scheme}]\label{def:base-change}
Let \( X \) be a projective scheme over a field \( \mathbb{K} \), and let \( L \) be an extension field of \( \mathbb{K} \). The base change of \( X \) to \( L \), denoted
\[
X_L = X \times_{\mathrm{Spec}\,\mathbb{K}} \mathrm{Spec}\,L,
\]
is the scheme obtained by extending scalars from \( \mathbb{K} \) to \( L \).
\end{definition}

\begin{lemma}\textnormal{\cite[Tag 035X]{stacks-project}}\label{lemma:XLreduced}
For any field extension \( L \) of \( \mathbb{K} \), the scheme \( X \) is reduced if and only if \( X_L \) is reduced.
\end{lemma}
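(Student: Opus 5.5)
The plan is to reduce the statement to the affine, ring-theoretic case and then use that field extensions are flat. Being reduced is a local property: a scheme is reduced if and only if it has an affine open cover $\{\operatorname{Spec} A_i\}$ with each $A_i$ reduced. The base change $X_L$ is covered by $\operatorname{Spec}(A_i \otimes_{\mathbb{K}} L)$, and the projection $X_L \to X$ is surjective, since $\operatorname{Spec} L \to \operatorname{Spec}\mathbb{K}$ is surjective and surjectivity is stable under base change. So it suffices to show, for a $\mathbb{K}$-algebra $A$, that $A$ is reduced if and only if $A \otimes_{\mathbb{K}} L$ is reduced.

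\emph{The direction ``$X_L$ reduced $\Rightarrow X$ reduced''} is the easy one. Since $L$ is a free, hence faithfully flat, $\mathbb{K}$-module, the map $A \to A\otimes_{\mathbb{K}} L$, $a \mapsto a\otimes 1$, is injective. A nilpotent $a$ maps to a nilpotent $a\otimes 1$, which is $0$, and therefore $a = 0$.

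\emph{The direction ``$X$ reduced $\Rightarrow X_L$ reduced''} carries the content and needs characteristic zero, which the paper assumes ($\mathbb{K}$ contains $\mathbb{Q}$; in the Magma computations the base field is $\mathbb{Q}(f,h)$). In characteristic zero every field extension is separable, so the result is the statement that reduced algebras remain reduced after a separable base change. I would argue in three steps.

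First, reduce to $A$ finitely generated over $\mathbb{K}$ and $L/\mathbb{K}$ finitely generated. A nilpotent element of $A\otimes L$ involves only finitely many elements of $A$ and of $L$, and tensor products commute with filtered colimits of injections, so it already lives in a tensor product of finitely generated pieces.

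Second, a reduced Noetherian $A$ embeds into $\prod_{\mathfrak p} \kappa(\mathfrak p)$, the product of the residue fields at its minimal primes, because the nilradical, which is $0$, is the intersection of the minimal primes. Tensoring this embedding with $L$ preserves injectivity by flatness. It therefore suffices to show that $F\otimes_{\mathbb{K}} L$ is reduced for field extensions $F, L$ of $\mathbb{K}$.

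Third, take $L$ finitely generated and separable over $\mathbb{K}$. Then $L$ has a separating transcendence basis, so $L$ is finite separable over $\mathbb{K}(t_1,\dots,t_r)$. On one side, $F\otimes \mathbb{K}(t_1,\dots,t_r)$ is a localization of $F[t_1,\dots,t_r]$, which is a domain. On the other side, for a finite separable extension $\mathbb{K}'(\alpha)$ with $\alpha$ having separable minimal polynomial $g$, we have $F' \otimes \mathbb{K}'(\alpha) \cong F'[x]/(g)$ for a domain $F'$. Passing to the fraction field of $F'$, the polynomial $g$ stays squarefree because its discriminant is nonzero. The Chinese remainder theorem then shows $\operatorname{Frac}(F')[x]/(g)$ is a product of fields, and $F'[x]/(g)$ embeds in it, so it is reduced.

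I expect this third step, the field-theoretic core, to be the main obstacle. It is exactly where separability, and hence $\operatorname{char}\mathbb{K}=0$, is used. Since the paper cites \cite[Tag 035X]{stacks-project}, it is acceptable to invoke the standard fact that separable extensions preserve reducedness instead of re-deriving it. The remaining steps are formal.
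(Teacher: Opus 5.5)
Your proof is correct and is essentially the standard argument behind the result the paper cites; the paper gives no proof of its own beyond the reference to \cite[Tag 035X]{stacks-project}. Your descent direction (injectivity of $A \to A \otimes_{\mathbb{K}} L$) and your ascent direction (reduce to finite type, embed into a finite product of fields, then treat a separably generated extension) are exactly how that fact is established. You are also right that the ascent direction needs $L/\mathbb{K}$ to be separable, so characteristic zero or a perfect $\mathbb{K}$; this matters because the paper applies the lemma over non-closed fields such as $\mathbb{Q}(f,h)$, and the statement as written fails for imperfect fields of characteristic $p$.
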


\begin{definition}[\textbf{Absolutely Irreducible}, \textnormal{\cite[Tag 0365]{stacks-project}}]\label{def:absolutely-irred}
Let \( X \) be a scheme over a field \( \mathbb{K} \). We say that \( X \) is absolutely irreducible over \( \mathbb{K} \) if the scheme \( X_L \) is irreducible for every field extension \( L \) of \( \mathbb{K} \).
\end{definition}

\begin{lemma}\textnormal{\cite[Tag 054R]{stacks-project}}\label{lemma:galextension-primarycomp}
Let \( X \) be an equidimensional projective scheme over a field \( \mathbb{K} \). Then there exists a finite Galois extension \( L/\mathbb{K} \) such that
\[
X_L = C_1 \cup C_2 \cup \cdots \cup C_t,
\]
where each \( C_i \) is a primary component of \( X_L \). Let \( R_i \) denote the reduced subscheme associated to \( C_i \) for all \( i \). Then:
\begin{enumerate}
    \item Each \( R_i \) is absolutely irreducible,
    \item \( \dim C_i = \dim R_i = \dim X \),
    \item \( \deg C_i \) does not depend on \( i \),
    \item \( {\deg C_i}/{\deg R_i} \) does not depend on \( i \),
    \item \( \deg X = \deg X_L = t \cdot \deg C_i. \)
\end{enumerate}
\end{lemma}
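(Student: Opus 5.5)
We treat the lemma as a statement about one $\mathbb{K}$-primary component at a time. So we assume that $X$ is irreducible over $\mathbb{K}$ and has no embedded components, i.e.\ $X$ is a single primary component of the kind returned by \texttt{PrimaryComponents}. This is how the lemma is used, and it is needed: items (3)--(5) fail for a union of two unrelated $\mathbb{K}$-components of different degrees. Since $\operatorname{char}\mathbb{K}=0$, all extensions are separable, and we may work inside $\overline{\mathbb{K}}$.

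\emph{Step 1 (choosing $L$).} The scheme $X_{\overline{\mathbb{K}}}$ has finitely many irreducible components $Z_1,\dots,Z_t$. Each $Z_j$, together with its primary ideal, is cut out by finitely many polynomials. Their coefficients lie in a finite extension of $\mathbb{K}$, and we let $L$ be the Galois closure of the field generated by all of them. Then the primary decomposition of $X_L$ already holds over $L$: $X_L=C_1\cup\dots\cup C_t$. Moreover $(R_i)_{\overline{\mathbb{K}}}=Z_i$ is irreducible, so by Definition~\ref{def:absolutely-irred} each reduced $R_i$ is absolutely irreducible; this gives (1). By Lemma~\ref{lemma:XLreduced}, reducedness is compatible with this base change, so $R_i$ really is the reduction of $C_i$.

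\emph{Step 2 (Galois transitivity).} The group $\Gamma=\mathrm{Gal}(L/\mathbb{K})$ acts on $X_L=X\times_{\mathbb{K}}\operatorname{Spec}L$ through the second factor. It therefore permutes the associated points and the primary components $C_i$. Take the $\Gamma$-orbit of $C_1$ and form the union of its members. This is a $\Gamma$-stable closed subscheme, so by Galois descent for closed subschemes (equivalently, $\Gamma$-invariant ideals are generated by their $\mathbb{K}$-rational parts) it descends to a closed subscheme of $X$. It contains a generic point of $X_L$ lying over the unique generic point of $X$. Since $X$ is irreducible with no embedded components, the descended subscheme is all of $X$, and hence $\Gamma$ acts transitively on $\{C_i\}$. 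This is the main step. The point needing most care is that, in the primary (non-reduced) case, the multiplicities $\operatorname{length}\mathcal{O}_{X_L,\eta_i}$ at the generic points are also permuted; this holds because each $\sigma\in\Gamma$ induces isomorphisms $\mathcal{O}_{X_L,\eta_i}\cong\mathcal{O}_{X_L,\sigma\eta_i}$.

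\emph{Step 3 (dimensions and degrees).} Each $\sigma\in\Gamma$ acts as a $\mathbb{K}$-automorphism of $\mathbb{P}^n_L$ that fixes the linear coordinates. It therefore preserves Hilbert polynomials, so $\deg C_i$ and $\deg R_i$ are constant along the orbit, which is everything. This gives (3) and (4). For (2), the base change $\operatorname{Spec}L\to\operatorname{Spec}\mathbb{K}$ is flat and integral, so $\dim X_L=\dim X$, and transitivity forces every component to have this dimension. For (5), flat base change preserves the Hilbert polynomial, so $\deg X=\deg X_L$. Degree is additive over top-dimensional components counted with multiplicity: $\deg X_L=\sum_i \operatorname{length}(\mathcal{O}_{X_L,\eta_i})\deg R_i=\sum_i\deg C_i$. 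By (3) this equals $t\cdot\deg C_1$.
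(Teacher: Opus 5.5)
Your proof is correct, and it goes well beyond what the paper offers. The paper gives no argument of its own, only the pointer to the Stacks Project. That reference supplies at most the existence of a finite separable (after closure, Galois) extension over which the irreducible components become geometrically irreducible, i.e.\ essentially item (1). Items (2)--(5) are not justified there. As you observe, they are false for an arbitrary equidimensional $X$, and restricting to a single $\mathbb{K}$-primary component is the right repair; it is also how the paper actually uses the lemma.

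The paper's own Example~\ref{procedure-check} shows this. Over the extension, $X_{400}$ for Algebra H has primary components of degrees $2,1,1$. That violates (3)--(5) for $X_{400}$ as a whole. It is exactly what your version predicts for its two $\mathbb{K}$-primary components: a double plane with $t=1$, and a quadric splitting into two planes with $t=2$.

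Your route has four steps:
\begin{enumerate}
\item take $L$ to be the Galois closure of a field of definition of the $\overline{\mathbb{K}}$-primary decomposition;
\item deduce transitivity of $\mathrm{Gal}(L/\mathbb{K})$ on the $C_i$ by descent;
\item obtain constancy of $\deg C_i$ and $\deg R_i$ because a $\sigma$-semilinear bijection of $L[x_0,\dots,x_n]_d$ preserves $L$-dimensions of subspaces;
\item get (5) from flat base change plus the associativity formula.
\end{enumerate}
This gives a self-contained proof of all five items and isolates the hypothesis that is really needed. The citation only buys brevity.

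Three points should be tightened.

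First, you assume $X$ has no embedded components, but Step 1 uses that $X_{\overline{\mathbb{K}}}$ has none. Otherwise the primary ideals of the $Z_j$ do not exhaust the primary decomposition, and (2) fails. This does hold. Let $I\subseteq S=\mathbb{K}[x_0,\dots,x_n]$ be the saturated ideal of $X$ and $P=\sqrt{I}$. For the flat extension $S\to \overline{S}=S\otimes_{\mathbb{K}}\overline{\mathbb{K}}$, the associated primes of $\overline{S}/I\overline{S}$ are those of $\overline{S}/P\overline{S}$. These are minimal because $(S/P)\otimes_{\mathbb{K}}\overline{\mathbb{K}}$ is reduced in characteristic zero.

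Second, in Step 2 you infer ``the descended subscheme $Y$ contains a point over the generic point, hence $Y=X$.'' This is false scheme-theoretically when $X$ is non-reduced: the reduction of a double plane contains its generic point. You only need it on underlying sets, though. By irreducibility $|Y|=|X|$, hence $|Y_L|=|X_L|$, so the orbit of $C_1$ contains every generic point of $X_L$. Transitivity therefore uses only irreducibility of $X$.

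Third, the definition of absolute irreducibility asks for irreducibility after every field extension, not just over $\overline{\mathbb{K}}$. Passing from irreducibility of $(R_i)_{\overline{\mathbb{K}}}$ to this uses the standard fact that geometric irreducibility can be tested over an algebraic closure.
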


\begin{lemma}\label{equidim-absolirred}
Let \( X \subseteq \mathbb{P}^n \) be an equidimensional closed subscheme of dimension \( d \), and let \( L \subseteq \mathbb{P}^n \) be a linear subspace of codimension \( c \). Suppose that:
\begin{enumerate}
  \item \( \deg(X \cap L) = \deg(X) \),
  \item \( \dim(X \cap L) = d - c \), and
  \item \( X \cap L \) is absolutely irreducible.
\end{enumerate}
Then \( X \) is absolutely irreducible.
\end{lemma}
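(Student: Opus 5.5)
The plan is to pass to the algebraic closure and show that $X_{\overline{\mathbb{K}}}$ has a single irreducible component. Since the hypotheses and conclusion are stable under base change (degrees and dimensions of projective schemes do not change, and Lemma~\ref{lemma:XLreduced} handles reducedness), we may assume $\mathbb{K}=\overline{\mathbb{K}}$. Write the fundamental cycle of $X$ as $[X]=\sum_{i=1}^t m_i[Y_i]$. Here the $Y_i$ are the irreducible components, all of dimension $d$ by equidimensionality, and the $m_i\ge 1$ are their multiplicities. Then $\deg X=\sum_i m_i\deg Y_i$. We must show $t=1$.

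\textbf{Step 1 (set-theoretic).} By the projective dimension theorem, each $Y_i\cap L$ is nonempty and every component of it has dimension $\ge d-c$. Hypothesis (2) says $X\cap L$ has dimension exactly $d-c$. Hypothesis (3) says its underlying set is a single irreducible $Z$. Hence every $Y_i\cap L$ equals $Z$ set-theoretically, so $Z\subseteq Y_i$ for all $i$. The intersection of $X$ with $L$ is therefore proper, with a single distinguished variety $Z$.

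\textbf{Step 2 (multiplicities).} Because the intersection is proper, refined B\'ezout (Fulton, Intersection Theory, Ch.~7--8) gives
\[
\deg X=\deg(X\cdot L)=i(Z;X\cdot L)\,\deg Z ,
\qquad
i(Z;X\cdot L)=\sum_i m_i\, i(Z;Y_i\cdot L)\ \ge\ \sum_i m_i\ \ge\ t .
\]
On the other hand, the scheme-theoretic degree of $X\cap L$ is $\ell\cdot\deg Z$, where $\ell$ is the length of $\mathcal{O}_{X\cap L}$ at the generic point of $Z$. Fulton's inequality states that an intersection multiplicity is at most the corresponding length, so $i(Z;X\cdot L)\le\ell$. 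Hypothesis (1), $\deg(X\cap L)=\deg X$, then forces $i(Z;X\cdot L)=\ell$.

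\textbf{Step 3 (conclusion).} If $X\cap L$ is reduced at the generic point of $Z$, then $\ell=1$. This gives $t\le\sum_i m_i\le 1$, so $X_{\overline{\mathbb{K}}}$ is irreducible (indeed generically reduced), and $X$ is absolutely irreducible.

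\textbf{Main obstacle.} The crux is exactly this use of reducedness, and the statement needs it. Absolute irreducibility of $X\cap L$ in the purely topological sense of Definition~\ref{def:absolutely-irred} is not enough. For example, take $X=\{xy=0\}\subset\mathbb{P}^3$, which has $d=2$ and $\deg X=2$. Let $L$ be a general line through a point of $\{x=y=0\}$, so $c=2$. Then $X\cap L$ is a single point of length $2$, so conditions (1)--(3) hold, yet $X$ is reducible. I would therefore interpret (3), as in Lemma~\ref{lemma:galextension-primarycomp}, as requiring $X\cap L$ to be geometrically integral, or at least generically reduced. I would state this assumption explicitly in the proof; it is automatic for a sufficiently general $L$ by Bertini whenever $X$ itself is generically reduced. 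With that reading, the remaining work is just invoking Fulton's inequality $i\le\ell$ correctly for the proper intersection $X\cap L$.
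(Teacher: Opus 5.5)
Your counterexample is correct. The lemma as printed is false, and the paper's proof breaks exactly where you say reducedness is needed. Take $X=V(xy)\subset\mathbb{P}^3$, $I=(xy)$, $K=(x)$, $J=(y)$, and $L=\{x=y=z\}$ through $p=[0:0:0:1]$, with ideal $Z=(x-z,\,y-z)$. Then $I+Z=(x-z,\,y-z,\,z^2)$ is a length-two point, so (1)--(3) hold while $X$ is reducible.

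The paper's proof asserts that absolute irreducibility of $X\cap L$ makes $I+Z$ prime. That tacitly assumes $X\cap L$ is geometrically integral, and here $I+Z\neq K+Z=(x,y,z)$. Two other steps are also unjustified in general:
\begin{enumerate}
\item The ``modular law'' $(K\cap J)+Z=(K+Z)\cap(J+Z)$ holds only up to radical; in your example the right side is $(x,y,z)$.
\item The inequality $\deg(K+Z)\le\deg K$ goes the wrong way for proper sections in general, since $i\le\ell$ gives $\deg(V(K)\cap L)\ge\deg K$. It is valid only for the reduced intersection, where it is B\'ezout's inequality $\deg Z\le\deg V(K)$.
\end{enumerate}

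Under your strengthened hypothesis ($X\cap L$ geometrically integral, or generically geometrically reduced), your proof is correct and takes a different route from the paper's. The paper splits $I=K\cap J$ and hunts for a degree contradiction at the level of ideals. You work with the fundamental cycle $\sum m_i[Y_i]$ and the bound $t\le\sum m_i\le i(Z;X\cdot L)$. This shows exactly where reducedness enters, and it also gives generic reducedness of $X$.

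The paper's argument can be repaired under the same hypothesis. Since $V(K)\cap L$ is a closed subscheme of the integral scheme $X\cap L$ of the same dimension, it equals $X\cap L$, so no modular law is needed. Then $\deg X=\deg(X\cap L)=\deg Z\le\deg V(K)<\deg X$. The middle inequality is precisely your $i(Z;V(K)\cdot L)\ge 1$, so both proofs rest on the same input.

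Your Step~2 uses more than it needs. Once $\ell=1$, Fulton's bound $1\le i\le\ell$ already forces $t=1$, so hypothesis (1) becomes redundant. Conversely, (1) alone gives $i=\ell$ without invoking $i\le\ell$, because $i\deg Z=\deg X=\deg(X\cap L)=\ell\deg Z$.
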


\begin{proof}
Assume that \(X\) is reducible. Then its ideal can be written as
\[
I = K \cap J,
\]
where \(K\) and \(J\) are proper ideals corresponding to distinct irreducible components of \(X\), each of dimension \(d\), and \(I \neq K, I \neq J\).

Intersecting with the linear subspace \(L\) corresponds to adding its ideal \(Z\), so
\[
I + Z = (K \cap J) + Z.
\]
By the modular law for ideals,
\[
I + Z = (K + Z) \cap (J + Z).
\]
Since \(X \cap L\) is absolutely irreducible, \(I + Z\) is a prime ideal. Thus,
\[
I + Z = K + Z \quad \text{or} \quad I + Z = J + Z.
\]
Without loss of generality, suppose
\[
I + Z = K + Z.
\]
From the additivity of degree for components, we have
\[
\deg(X) = \deg(K) + \deg(J).
\]
Because intersecting with \(L\) preserves degree by assumption,
\[
\deg(X \cap L) = \deg(X) = \deg(K) + \deg(J).
\]
On the other hand, 
\[
\deg(K + Z) \leq \deg(K).
\]
But since \(I + Z = K + Z\), it follows that
\[
\deg(X \cap L) = \deg(I + Z) = \deg(K + Z) \leq \deg(K),
\]
which contradicts $\deg(K) + \deg(J) = \deg(X \cap L) \leq \deg(K),$
because \(\deg(J) > 0\).

Hence, the assumption that \(X\) is reducible leads to a contradiction. Therefore, \(X\) is absolutely irreducible.
\end{proof}

Therefore, using Definition~\ref{def:absolutely-irred}, Lemma~\ref{lemma:galextension-primarycomp} and Lemma~\ref{equidim-absolirred}, we can verify whether each component of \(X\) is absolutely irreducible. Below we give a brief explanation of how absolute irreducibility is checked in Magma.

We intersect a primary component \texttt{Component1} with a randomly chosen linear space to obtain a curve $C$. The command \texttt{IsAbsolutelyIrreducible(C)} returns \texttt{true} if \(C\) is absolutely irreducible, and \texttt{false} otherwise.

If the command returns \texttt{false}, we can determine a suitable field extension by computing \texttt{FieldOfGeometricIrreducibility(C)}, which gives the algebraic closure of the base field in the function field of \(C\). This process can be repeated for all primary components of $X$ to verify absolute irreducibility. The code is given below:

\begin{flushleft}
\texttt{pc := PrimaryComponents(X);} \\
\texttt{Component1 := pc[1]; // first component} \\
\texttt{C := Curve(Intersection(Component1, Scheme(P1, [49*x1 - 17*x2 + 6*x3])));} \\
\texttt{IsAbsolutelyIrreducible(C);} \\
\texttt{// true  $\rightarrow$ component is absolutely irreducible}
\end{flushleft}

\begin{flushleft}
\texttt{// false $\rightarrow$ we set }\\
\texttt{KK := FieldOfGeometricIrreducibility(C);} \\
\texttt{XKK := ChangeRing(X, KK);} \\
\end{flushleft}

\begin{example}~\label{procedure-check}
The Primary components computed in Example~\ref{discrete-inv-4-algH} are not all absolutely irreducible, so we recompute them over \texttt{XKK} and repeat until all components are absolutely irreducible.

\begin{flushleft}
\texttt{// Primary Components of the scheme XKK } \\
\texttt{[[Dimension(p), Degree(p)] : p in PrimaryComponents(XKK)];}\\
\texttt{> [ [ 2, 2 ], [ 2, 1 ], [ 2, 1 ] ]}
\end{flushleft}

\begin{flushleft}
\texttt{// Primary Components of the reduced subscheme of XKK} \\
\texttt{[[Dimension(p), Degree(p)] : p in PrimaryComponents(ReducedSubscheme(XKK))];}\\
\texttt{> [ [ 2, 1 ], [ 2, 1 ], [ 2, 1 ] ]}
\end{flushleft}

\begin{flushleft}
\texttt{// Primary Components of the singular subscheme of XKK} \\
\texttt{[[Dimension(p), Degree(p)] : p in PrimaryComponents(SingularSubscheme(XKK))];}\\
\texttt{> [ [ 2, 1 ], [ 1, 2 ], [ 1 , 2 ] [ 1, 1 ] ]}
\end{flushleft}

\begin{flushleft}
\texttt{// Primary Components of the reduced subscheme of the singular subscheme of XKK} \\
\texttt{[[Dimension(p), Degree(p)] : p in PrimaryComponents(ReducedSubscheme(SingularSubscheme(XKK)))];}\\
\texttt{> [ [ 2, 1 ], [ 1, 1 ] ]}
\end{flushleft}
\end{example}

\subsection{Representation-Theoretic Setup for \(\lambda = (2,2)\)}
\noindent

Consider \(w_\lambda\), the component of the superpotential for $\lambda = (2,2)$, defined by
\[
w_{(2,2)} := e_{(2,2)} \cdot w \in \mathbb{S}_{(2,2)} V \otimes U_{(2,2)} \subseteq V^{\otimes 4}.
\]
As a \(\GL(V)\)-representation, we have $ \mathbb{S}_{(2,2)} V \otimes U_{(2,2)} \;\simeq\; (\mathbb{S}_{(2,2)} V)^{\oplus 2}$, since  $U_{(2,2)}$ is 2-dimensional. 

The two copies of \(\mathbb{S}_{(2,2)} V\) can be realized inside \(V^{\otimes 4}\) in terms of symmetric and exterior powers:
\[
\mathbb{S}_{(2,2)} V \subseteq \mathrm{Sym}^2(\bigwedge^2 V) \subseteq V^{\otimes 4} \quad \text{and} \quad 
\mathbb{S}_{(2,2)} V \subseteq \mathrm{Sym}^2(\mathrm{Sym}^2 V) \subseteq V^{\otimes 4}.
\]

Let \( C_4 \leq S_4 \) denote the cyclic subgroup generated by the cycle $p = (1234)$. 
Upon restricting the irreducible \(S_4\)-representation \(U_{(2,2)}\) to \(C_4\), we obtain
\[
U_{(2,2)}\big|_{C_4} \simeq L_0 \oplus L_2,
\]
where \(L_0\) and \(L_2\) denote the one-dimensional representations of \(C_4\) on which the generator $p$ acts by \(1\) and \(-1\), respectively.
 
Therefore, we can write
\[
\mathbb{S}_{(2,2)}V \otimes U_{(2,2)}
\;\simeq\;
(\mathbb{S}_{(2,2)}V \otimes L_0)
\;\oplus\;
(\mathbb{S}_{(2,2)}V \otimes L_2).
\]

As $\mathrm{GL}(V)$-representations, the two copies of $\mathbb{S}_{(2,2)} V$ are isomorphic. In particular, we can write the exact sequence
\[
0 \longrightarrow \mathbb{S}_{(2,2)}V \;\longrightarrow\; \mathrm{Sym}^2(\mathrm{Sym}^2 V) \;\longrightarrow\; \mathrm{Sym}^4(V) \longrightarrow 0,
\]
where
\[
\mathbb{S}_{(2,2)} V = \ker\big(\mathrm{Sym}^2(\mathrm{Sym}^2 V) \to \mathrm{Sym}^4(V)\big).
\]

To make these copies of $\mathbb{S}_{(2,2)}V$ explicit, we construct bases for the subspaces
\[
\mathrm{Sym}^2(\mathrm{Sym}^2 V) \subseteq V^{\otimes 4} \quad \text{and} \quad \mathrm{Sym}^2(\bigwedge^2 V) \subseteq V^{\otimes 4}.
\]
Using commutators and skew-commutators, we generate spanning sets for each subspace and then reduce them to linearly independent sets to form bases. Each basis has dimension 20. 

For convenience, we set $W_0 := \mathrm{Sym}^2(\bigwedge^2 V)$ and $W_1 := \mathrm{Sym}^2(\mathrm{Sym}^2 V)$, and we now find highest weight vectors in these subspaces using the coproduct action. Recall that the coproduct action of a Lie algebra element $g \in \mathfrak{gl}(V)$ on $V^{\otimes 4}$ is defined by
\[
g \cdot (v_1 \otimes v_2 \otimes v_3 \otimes v_4) 
= \sum_{i=1}^{4} v_1 \otimes \cdots \otimes v_{i-1} \otimes g(v_i) \otimes v_{i+1} \otimes \cdots \otimes v_4,
\]
so that $g$ acts on each tensor factor individually while leaving the others unchanged.

We choose a set of generators for the Lie algebra $\mathfrak{gl}(V)$, consisting of the positive root vectors $e_{ij}$ with $i < j$.  
Intersecting the nullspaces of the raising operators $e_{ij}$ with the previously constructed subspaces $W_0$ and $W_1$ produces the highest weight vectors 
\[
h_{0} \in W_0 \quad \text{and} \quad h_{1} \in W_1, 
\]
which are given by 
$$h_0 := -x_3x_4x_3x_4 + x_3x_4^{2}x_3 + x_4x_3^{2}x_4 - x_4x_3x_4x_3,$$ and 
$$ h_1:= x_3^2 x_4^2
- \frac{1}{2} x_3 x_4 x_3 x_4
- \frac{1}{2} x_3 x_4^2 x_3
- \frac{1}{2} x_4 x_3^2 x_4
- \frac{1}{2} x_4 x_3 x_4 x_3
+ x_4^2 x_3^2.$$
Next, we decompose the highest weight vector $h_1$ into its $C_4$-isotypic components. These components correspond to the characters $L_0$ and $L_2$. The idempotents in $\mathbb{K}[C_4]$ associated with these characters are given by
\[
e_{L_0} = \frac{1}{4}(1 + p + p^2 + p^3), 
\qquad
e_{L_2} = \frac{1}{4}(1 - p + p^2 - p^3),
\]
which gives
\[
h_1^0 := e_{L_0} \cdot h_1
\qquad \text{and} \qquad 
h_1^1 := e_{L_2} \cdot h_1.
\]

The expressions for $h_{1}^{0}$ and $h_{1}^{1}$ are given by 
\[
\begin{aligned}
h_1^0 &:= \frac{1}{4} x_3^2 x_4^2
- \frac{1}{2} x_3 x_4 x_3 x_4
+ \frac{1}{4} x_3 x_4^2 x_3
+ \frac{1}{4} x_4 x_3^2 x_4
- \frac{1}{2} x_4 x_3 x_4 x_3
+ \frac{1}{4} x_4^2 x_3^2,\\
h_1^1 &:= \frac{3}{4} x_3^2 x_4^2
- \frac{3}{4} x_3 x_4^2 x_3
- \frac{3}{4} x_4 x_3^2 x_4
+ \frac{3}{4} x_4^2 x_3^2.
\end{aligned}
\]
Thus, the three highest weight vectors are $h_0, \; h_1^0, \; h_1^1.$ It is easy to interpret $h_0$ as a function on $\mathrm{Gr}(2,4)$, while $h_1$ is not, so we split it as $h_1 = h_1^0 + h_1^1$.

To generate the full subrepresentations, we repeatedly apply the coproduct action of Lie algebra elements to each highest weight vector to obtain the matrices $M_0$, $M_1$, and $M_2$. Each matrix has size $10000 \times 256$; their rank is 20, which equals $\dim \mathbb{S}_{(2,2)} V$. Here,
\[
\text{lieAlg} = \{ e_1, e_2, e_3, e_4 \} \;\cup\; \{ e_{ij} \mid 1 \le j < i \le 4 \} 
\]
denotes the set of Cartan elements and lowering operators. The matrices are then defined in Magma as follows:
\begin{flushleft} 
\texttt{M0 := [ h\textsubscript{1}\textsuperscript{0} * coproduct(X) * coproduct(Y) * coproduct(Z) * coproduct(W) | X, Y, Z, W in lieAlg ];}
\end{flushleft}

\begin{flushleft} 
\texttt{M1 := [ h\textsubscript{0} * coproduct(X) * coproduct(Y) * coproduct(Z) * coproduct(W) | X, Y, Z, W in lieAlg ];}
\end{flushleft}

\begin{flushleft} 
\texttt{M2 := [ h\textsubscript{1}\textsuperscript{1} * coproduct(X) * coproduct(Y) * coproduct(Z) * coproduct(W) | X, Y, Z, W in lieAlg ];}
\end{flushleft}

Next, to interpret these highest weight vectors as functions on $\mathrm{Gr}(2,4)$, we construct isomorphisms $$ h_1^0 \cdot \mathfrak{gl}(V) \simeq \mathbb{S}_{(2,2)} V \otimes L_0 \subseteq V^{\otimes 4}, \quad
 h_1^1 \cdot \mathfrak{gl}(V) \simeq \mathbb{S}_{(2,2)} V \otimes L_2 \subseteq V^{\otimes 4},$$
and $$h_{0} \cdot \mathfrak{gl}(V) \simeq W_0  \subseteq V^{\otimes 4}.$$  
Let \(\phi_1^0 : V^{\otimes 4} \longrightarrow V^{\otimes 4}\) denote the linear map. We are interested in its restriction 
\(
\phi_1^0| : h_1^0 \cdot \mathfrak{gl}(V)\;\xrightarrow{\sim}\; h_{0} \cdot \mathfrak{gl}(V).
\)

For a vector \(X \in \mathbb{K}^{10000}\) in the \(\mathfrak{gl}(V)\)-orbit, $M_0$ sends it to $\mathbb{S}_{(2,2)} V \otimes L_0 \subseteq V^{\otimes 4}$, while $M_1$ sends it to $W_0 \subseteq V^{\otimes 4}$. We then solve for \(\phi_1^0\) making the diagram commute:
\begin{equation*}
\phi_1^0 (M_0 X) = M_1 X \quad \text{for all } X.
\end{equation*}

\begin{figure}[h!]
\centering
\begin{tikzcd}[row sep=0.5cm, column sep=0.5cm]
& \mathbb{K}^{256} \arrow[dd, "\phi_1^0"] & \\
 \mathbb{K}^{10000} \arrow[ur, "M_0"] \arrow[dr, swap, "M_1"] & & \\
&  \mathbb{K}^{256} &
\end{tikzcd}
\end{figure}

Similarly, we can define the map \(\phi_1^1\) using the highest weight vector \(h_1^1\). We find the matrices representing \(\phi_1^0\) and \(\phi_1^1\) in Magma using the following commands:
\begin{flushleft}
\texttt{S01 := Transpose(Solution(Transpose(M0), Transpose(M1)));}
\\
\texttt{S21 := Transpose(Solution(Transpose(M2), Transpose(M1)));} 
\end{flushleft}

Each of the matrices $S01$ and $S21$ has size  $256 \times 256$. Applying the maps to the highest weight vectors gives $\phi_1^0(h_1^0) = h_0$ and  $\phi_1^1(h_1^1) = h_0.$

We project \(w_{(2,2)}\) onto \(L_0\) and \(L_2\), obtaining
$$w_{(2,2)} = w_{(2,2)}^{\mathrm{triv}} + w_{(2,2)}^{\mathrm{sign}},$$
where 
$w_{(2,2)}^{\mathrm{triv}} := e_{L_{0}} \cdot w_{(2,2)} \quad \text{and } \quad
w_{(2,2)}^{\mathrm{sign}} := e_{L_{2}} \cdot w_{(2,2)}.$

We then obtain the vectors $f_0$ and $f_2$ by applying the linear maps $\phi_1^0$ and $\phi_1^1$ to the corresponding components of the superpotential:
$$f_0 := \phi_1^0\big(w_{(2,2)}^{\mathrm{triv}}\big), \qquad
f_2 := \phi_1^1\big(w_{(2,2)}^{\mathrm{sign}}\big).$$
Let $S = \{S_i\}$ be an ordered basis of $W_0 \subseteq V^{\otimes 4}$, we then solve the following system of equations for the coefficients $\beta_0^i$ and $\beta_2^i$:
$$ f_0 = \sum_i \beta_0^i \, S_i, \qquad f_2 = \sum_i \beta_2^i \, S_i.$$
Next, we describe the geometric locus $X_{(2,2)}(w) \subseteq \mathrm{Gr}(2,4).$
Let $R = \mathbb{K}[b_1, \dots, b_6]$ be the polynomial ring in six variables. Via the Plücker embedding, the Grassmannian $\mathrm{Gr}(2,4)$ is realized as a projective subvariety $ \mathrm{Gr}(2,4) \subseteq \mathbb{P}(\bigwedge^2 V) \cong \mathbb{P}^5,$
with homogeneous coordinate ring generated by $b_1, \dots, b_6$ and subject to the Plücker relation
$$Q_{\mathrm{Gr}} := b_1 b_6 - b_2 b_5 + b_3 b_4.$$
Finally, to obtain the quadrics in the Plücker coordinates, we write the basis elements $S_i$ as quadratic forms in $b_1, \dots, b_6$ and form the linear combinations
\[
\beta_0 = \sum_i \beta_0^i \, S_i(b_1,\dots,b_6), \qquad
\beta_2 = \sum_i \beta_2^i \, S_i(b_1,\dots,b_6).
\]
A few basis elements are
\[
S = \{4 b_1^2, 4 b_1 b_2, 4 b_2^2, 4 b_1 b_3, 4 b_2 b_3, 4 b_3^2,  \dots \}.
\]
We then define the ideal
$I := \langle Q_{\mathrm{Gr}}, \beta_0, \beta_2 \rangle$ and the corresponding projective scheme is
$X_{(2,2)}(w) := \mathrm{Proj}(R/I) \subseteq \mathbb{P}^5.$ Therefore, we can compute the discrete invariants as in the Example~\ref{discrete-inv-4-algH}.

Similar computations are done for the partitions $\lambda = (2,1,1)$ and $\lambda = (3,1)$; the code is available in the GitHub repository~\cite{vishal2025}.

\section{NOTATIONS AND DISCRETE INVARIANTS}

Following the notation for partitions from Example~\ref{notations-lambda-}, we define the associated geometric locus \(X_\lambda(w)\) as
\[
\begin{aligned}
X_{(4)}(w) &\; :=  X_{400}, & 
X_{(3,1)}(w) &\; := X_{210}, & 
X_{(2,2)}(w) &\; := X_{020}, \\
X_{(2,1,1)}(w) &\; := X_{101}, & 
X_{(1,1,1,1)}(w) &\; := X_{000}.
\end{aligned}
\]

We also introduce the following schemes and their associated discrete invariants:
\begin{itemize}
  \item Let $\mathrm{PC}(X)$ denote the set of primary components of the scheme $X$. 
  The dimension-degree data of the primary components of \( X \) is given by
  \[
  \mathrm{dimdegPC}(X) := \{ [\dim(C), \deg(C)] \mid C \in \mathrm{PC}(X) \}.
  \]

  \item The \textbf{reduced subscheme} of $X$ is denoted by $\bar{X}$, 
  with associated data
  \[
  \mathrm{dimdegPC}(\bar{X}) := \{ [\dim(C), \deg(C)] \mid C \in \mathrm{PC}(\bar{X}) \}. \] 
  
  \item The \textbf{singular subscheme} of $X$ is denoted by 
   $X^{\mathrm{sing}}$, 
  with associated data
  $$
  \mathrm{dimdegPC}(X^{\mathrm{sing}}) := \{ [\dim(C), \deg(C)] \mid C \in \mathrm{PC}(X^{\mathrm{sing}}) \}. 
  $$

  \item The \textbf{reduced subscheme of the singular subscheme} of $X$ is denoted by 
  $\bar{X}^{\mathrm{sing}}$, with associated data
  $$
  \mathrm{dimdegPC}(\bar{X}^{\mathrm{sing}}) := \{ [\dim(C), \deg(C)] \mid C \in \mathrm{PC}(\bar{X}^{\mathrm{sing}}) \}. 
   $$
\end{itemize}

For each partition $\lambda \vdash 4$, we provide dimension-degree data for \( X \), \( X^{\mathrm{sing}} \), and \( \bar{X}^{\mathrm{sing}} \). Entries are pairs \([\mathrm{dim}, \mathrm{deg}]\), with multiplicities indicated where relevant. An empty bracket entry [ ] indicates that the singular subscheme is empty (i.e., its dimension is $-1$), and hence the corresponding surface is smooth. A table entry of $0$ indicates that the corresponding component $w_{\lambda}$ is zero.

We note that some algebras have already been shown to be isomorphic in the literature. In their paper \cite{MR2529094}, the authors classify 4-dimensional regular domains of the form \( (kQ[x_1, x_2])_P[y_1, y_2; \sigma] \) up to isomorphism~\cite[Proposition~4.4]{MR2529094}. Using the notion of \emph{\(\Sigma\)-\(M\)-duality}~\cite[Definition~4.3]{MR2529094}, based on twist equivalence~\cite[Definition~3.3]{MR2529094}, they identify dual pairs - (E, J), (F, I), (N, P), (T, U), and (W, Z) which are isomorphic via exchange of \( x_i \)'s and \( y_i \)'s, and self-dual algebras: B, C, M, O, R, and S.

We now present the discrete invariants for the 77 known families of Koszul AS-regular algebras in Sections~5.1-5.4.

\clearpage

\subsection{Discrete invariants for \(\boldsymbol{\lambda = (4)}\)}
\noindent


\begin{table}[ht!]
\centering
\renewcommand{\arraystretch}{1.5}  
\small 
\begin{tabularx}{\textwidth}{|c|>{\RaggedRight\arraybackslash}X|
                                >{\RaggedRight\arraybackslash}X|
                                >{\RaggedRight\arraybackslash}X|}
\hline
\textbf{Algebra} & \(\boldsymbol{\mathrm{dimdegPC}}(X)\) & \(\boldsymbol{\mathrm{dimdegPC}}(X^{\mathrm{sing}})\) & \(\boldsymbol{\mathrm{dimdegPC}}(\bar{X}^{\mathrm{sing}})\) \\
\hline
 A  & [2,1], [2,3] & [1,3], [1,1]$\times$2, [0,8], [0,2] & [1,1]$\times$3 \\
\hline
 B  & [2,4] & [1,1]$\times$2, [0,8]$\times$2 & [1,1]$\times$2 \\
\hline
 C & [2,4] & [1,1]$\times$2, [0,6]$\times$2, [0,2]$\times$2 & [1,1]$\times$2 \\
\hline
 D & [2,1], [2,3] & [1,1]$\times$4, [0,4]$\times$3 & [1,1]$\times$4 \\
\hline
 (E,J) & [2,4] & [1,1]$\times$2, [0,8]$\times$2 & [1,1]$\times$2 \\
\hline
 (F,I) & [2,4] & [1,1]$\times$2, [0,8]$\times$2 & [1,1]$\times$2 \\
\hline
 G & [2,1], [2,3] & [1,1]$\times$4, [0,4]$\times$3 & [1,1]$\times$4 \\
\hline
 H & [2,1]$\times$2, [2,2] & [1,1], [1,2]$\times$2,  [2,1]  & [2,1], [1,1] \\
\hline
 K & [2,1]$\times$4 & [1,1]$\times$6 & [1,1]$\times$6 \\
\hline
 L & [2,1]$\times$4 & [1,1]$\times$6 & [1,1]$\times$6 \\
\hline
 M & [2,1]$\times$4 & [1,1]$\times$6 & [1,1]$\times$6 \\
\hline
 (N,P) & [2,4] & [1,1]$\times$2, [0,8], [0,4]$\times$2 & [1,1]$\times$2 \\
\hline
 O & [2,4] & [1,1]$\times$2, [0,8], [0,4]$\times$2 & [1,1]$\times$2 \\
\hline
 Q & [2,1], [2,3] & [1,1]$\times$4, [0,2]$\times$2, [0,4]$\times$2 & [1,1]$\times$4 \\
\hline
 R & [2,4] & [1,1]$\times$2, [0,8]$\times$2 & [1,1]$\times$2 \\
\hline
 S & [2,4] & [1,1]$\times$2, [0,4]$\times$2, [0,3]$\times$2, [0,2]$\times$2 & [1,1]$\times$2 \\
\hline
 (T,U) & [2,4] & [1,1]$\times$2, [0,4]$\times$3, [0,3], [0,2] & [1,1]$\times$2 \\
\hline
 V & [2,1], [2,3] & [1,1]$\times$4, [0,2]$\times$2, [0,4]$\times$2 & [1,1]$\times$4\\
\hline
 (W,Z) & [2,4] & [1,1]$\times$2, [0,8], [0,4]$\times$2 & [1,1]$\times$2 \\
\hline
 X & [2,2], [2,1]$\times$2 & [2,1], [1,2]$\times$2, [1,1] & [2,1], [1,1] \\
\hline
 Y & [2,1], [2,3] & [1,1]$\times$4, [0,4]$\times$2, [0,2]$\times$2 & [1,1]$\times$4 \\
\hline
\end{tabularx} 
\caption{Discrete invariants for Koszul Artin-Schelter regular algebras of dimension four from \cite{MR2529094}.}\label{discreteinvlambda4:tab1}
\end{table}

\clearpage

\pagebreak

\begin{table}[ht!]
\centering
\renewcommand{\arraystretch}{1.5}  
\setlength{\tabcolsep}{2pt} 
\small 

\begin{tabularx}{\textwidth}{|c|>{\RaggedRight\arraybackslash}X|
                                >{\RaggedRight\arraybackslash}X|
                                >{\RaggedRight\arraybackslash}X|}
\hline
\textbf{Algebra} & \(\boldsymbol{\mathrm{dimdegPC}}(X)\) & \(\boldsymbol{\mathrm{dimdegPC}}(X^{\mathrm{sing}})\) & \(\boldsymbol{\mathrm{dimdegPC}}(\bar{X}^{\mathrm{sing}})\) \\
\hline
Vancliff & [2,1]$\times$2, [2,2] & [1,1]$\times$5, [0,4]$\times$2 & [1,1]$\times$5   \\
\hline
Vancliff twist &  [2,1]$\times$2, [2,2] &  [1,1]$\times$5, [0,4]$\times$2 & [1,1]$\times$5   \\
\hline 
$S_\infty$ & [2,4] & [ ] & [ ] \\
\hline
$S_\infty$ twist & [2,4] & [ ] & [ ] \\
\hline
$S_{d,i}$ $(i=1)$	 & [2,1]$\times$2, [2,2] &  [1,2]$\times$2, [1,1] & [1,2]$\times$2, [1,1]   \\
\hline
$S_{d,i}$ $(i=2)$	  & [2,1]$\times$2, [2,2] &  [1,2]$\times$2, [1,1] & [1,2]$\times$2, [1,1]   \\
\hline
$S_{d,i}$ $(i=3)$	 & [2,1]$\times$2, [2,2] &  [1,2]$\times$2, [1,1] & [1,2]$\times$2, [1,1]    \\
\hline
$S_{d,i}$ $(i=4)$	  & [2,1]$\times$2, [2,2] &  [1,2]$\times$2, [1,1] & [1,2]$\times$2, [1,1]    \\
\hline
$S_{d,i}$ $(i=5)$	  & [2,1]$\times$2, [2,2] &  [1,2]$\times$2, [1,1] & [1,2]$\times$2, [1,1]   \\
\hline
$S_{d,i}$ $(i=6)$	  &  [2,1]$\times$2, [2,2] &  [1,2]$\times$2, [1,1] & [1,2]$\times$2, [1,1]    \\
\hline
$S_{d,i}$ twist $(i=1)$	& [2,4] & [ ] & [ ] \\
   \hline 
$S_{d,i}$ twist $(i=3)$  &  [2,4] & [ ] & [ ] \\
   \hline 
$S_{d,i}$ twist  $(i=5)$  & [2,4] & [ ] & [ ] \\
 \hline 
$\mathbb{K}_Q[x_1, x_2, x_3, x_4]$	 & [2,1]$\times$4 & [1,1]$\times$6 &  [1,1]$\times$6   \\
\hline
Clifford & [2,4] & [ ] & [ ] \\
   \hline 
Central Extensions twist & [2,1], [2,3] & [1,3]  & [1,3]   \\
\hline
Kirkman R & [2,4] & [1,1]$\times$2, [0,8]$\times$2 & [1,1]$\times$2   \\
\hline
Kirkman S & [2,4] & [0,1]$\times$6 & [0,1]$\times$6    \\
\hline
Kirkman T & [2,4] & [0,1]$\times$2 &  [0,1]$\times$2  \\
\hline
Cassidy-Vancliff 2 & [2,4] & [ ] & [ ] \\
\hline
Cassidy-Vancliff 3 & [2,4]  & [0,8], [0,3]  & [0,1]$\times$2  \\
\hline
$R(3,a)$ & [2,2]$\times$2 & [2,1], [1,6] & [2,1] \\
\hline
$A_5$ & [2,1], [2,3] & [1,3], [0,8] & [1,3], [0,1] \\
\hline
\end{tabularx} 

\caption{Discrete invariants for Koszul Artin-Schelter regular algebras of dimension four from \cite{MR1429334, Stafford1994, Davies_2016, ExoticElliptic, Cassidy2009Generalizations, Pym2013PoissonSA, LECOUTRE_2017, Shelton2001OnKA}.}\label{discreteinvlambda4:tab2}
\end{table}
\pagebreak

\begin{table}[ht!]
\centering
\renewcommand{\arraystretch}{1.4}  
\setlength{\tabcolsep}{2pt} 
\small 

\begin{tabularx}{\textwidth}{|c|>{\RaggedRight\arraybackslash}X|
                                >{\RaggedRight\arraybackslash}X|
                                >{\RaggedRight\arraybackslash}X|}

\hline
\textbf{Algebra} & \(\boldsymbol{\mathrm{dimdegPC}}(X)\) & \(\boldsymbol{\mathrm{dimdegPC}}(X^{\mathrm{sing}})\) & \(\boldsymbol{\mathrm{dimdegPC}}(\bar{X}^{\mathrm{sing}})\) \\
\hline
\(\mathcal{F}_{\substack{(0,-1,-1,2)\\(0,0,0,0)}}\) & [2,1]$\times$2, [2,2] & [1,2], [1,1]$\times$3, [0,5] & [1,2], [1,1]$\times$3 \\
\hline
\(\mathcal{F}_{\substack{(-1,-1,1,1)\\(0,0,0,0)}}\) & [2,1]$\times$2, [2,2] & [1,1]$\times$5, [0,4]$\times$2 &[1,1]$\times$5 \\
\hline
\(\mathcal{F}_{\substack{(0,-1,-1,2)\\(-1,0,-1,2)\\(0,0,0,0)}}\) & [2,1], [2,3] & [1,1]$\times$3, [0,7], [0,5]$\times$2 &[1,1]$\times$3 \\
\hline
\(\mathcal{F}_{\substack{(0,-1,-1,2)\\(0,0,0,0)\\(-1,-1,2,0)}}\) & [2,1], [2,3] & [1,2], [1,1], [0,10], [0,6] &[1,2], [1,1] \\ 
\hline
OreExt Type \(A_1\)  &  [2,1], [2,3] & [1,3], [0,8] & [1,3], [0,1] \\
\hline
OreExt Type \(A_2\) & [2,1], [2,3] & [1,3], [0,8] & [1,3], [0,1] \\
\hline
OreExt Type \(A_3\)  & [2,1], [2,3] & [1,3], [0,8] & [1,3], [0,1] \\
\hline
OreExt Type \(B_1\)  & [2,1], [2,3] & [1,3], [0,8] & [1,3], [0,1] \\
\hline
OreExt Type \(E_1\) & [2,1], [2,3] & [1,3], [0,8] & [1,3], [0,1] \\
\hline
OreExt Type \(E_2\) & [2,1], [2,3] & [1,3], [0,8] & [1,3], [0,1] \\
\hline
OreExt Type $H.$I  & [2,1], [2,3] & [1,3], [0,8] & [1,3], [0,1] \\
\hline
OreExt Type $H.$II & [2,1], [2,3] & [1,3], [0,8] & [1,3], [0,1] \\
\hline
OreExt Type \(S_1^{'}\) & [2,1]$\times$2, [2,2] & [1,2], [1,1]$\times$3, [0,5] & [1,2], [1,1]$\times$3 \\
\hline
OreExt Type \(S_2\) & [2,1]$\times$2, [2,2] & [1,2]$\times$2, [1,1]$\times$2 & [1,2]$\times$2, [1,1]$\times$2 \\
\hline
\end{tabularx} 
\caption{Discrete invariants for Koszul Artin-Schelter regular algebras of dimension four from \cite{grimley2016hochschild, MR1429334}.}\label{discreteinvlambda4:tab3}
\end{table}


\begin{remark}
Note that \( w_{(4)} = 0 \) for the following algebras: \\
    Central Extensions, $L(1,1,2)$, $E(3)$, 4D Sklyanin, 4D Sklyanin twist, Shelton-Tingey, $\mathbb{K}[x_1,x_2,x_3,x_4]$, $S(2,3)$, $S_{d,i}$ twist $(i = 2, 4, 6)$, Cassidy-Vancliff 1, $L(1,1,2)^{\sigma}$, $S(2,3)^{\sigma}$, Caines Algebra, 
    \(\mathcal{F}_{\substack{(0,-1,-1,2)\\(0,0,0,0)\\(2,-1,-1,0)}}\), Lie Algebra 1, Lie Algebra 2, Lie Algebra 3, Lie Algebra 4, $\mathfrak{sl_2}$, CentExt Type $B$, CentExt Type $E$, CentExt Type $H$, CentExt Type $S_1$, CentExt Type $S_1^{'}$, CentExt Type $S_2$
\end{remark}

\begin{remark}
For all algebras in Tables~\ref{discreteinvlambda4:tab1}, \ref{discreteinvlambda4:tab2}, and \ref{discreteinvlambda4:tab3}, we have 
$X = \bar{X},$ except for H, X, and \(R(3,a)\).
\end{remark}

\clearpage

\pagebreak

We give some examples illustrating the structure and geometric meaning of the discrete invariants; here \(X = X_{400}\).







 

\begin{example}[Algebra M]\leavevmode
\begin{itemize}
   \item \(\mathrm{dimdegPC}(X) = [2,1]\times4\)
   \item \(\mathrm{dimdegPC}(X^{\mathrm{sing}}) = [1,1]\times6\)
   \item \(\mathrm{dimdegPC}(\bar{X}^{\mathrm{sing}}) = [1,1]\times6\)
\end{itemize}

\(X\) is reducible, the union of four planes in \(\mathbb{P}^3\), each plane irreducible. It is singular along six lines.
\end{example}


 







\begin{example}[Algebra \(S_\infty\)]\leavevmode
\begin{itemize}
    \item $\mathrm{dimdegPC}(X) = [2,4]$
    \item \(\mathrm{dimdegPC}(X^{\mathrm{sing}}) = [\,]\)
    \item \(\mathrm{dimdegPC}(\bar{X}^{\mathrm{sing}}) = [\,]\)
\end{itemize}
 
\(X\) is a smooth quartic surface in \(\mathbb{P}^3\). 
\end{example}




\clearpage 

\subsection{Discrete invariants for \(\boldsymbol{\lambda = (2,2)}\)}
\noindent

\begin{table}[ht!]
\centering
\renewcommand{\arraystretch}{1.3}  
\small 

\begin{tabularx}{\textwidth}{|c|>{\RaggedRight\arraybackslash}X|
                                >{\RaggedRight\arraybackslash}X|
                                >{\RaggedRight\arraybackslash}X|}                   
\hline
\textbf{Algebra} & \(\boldsymbol{\mathrm{dimdegPC}}(X)\) & \(\boldsymbol{\mathrm{dimdegPC}}(X^{\mathrm{sing}})\) & \(\boldsymbol{\mathrm{dimdegPC}}(\bar{X}^{\mathrm{sing}})\) \\
\hline
 A & [2,3]$\times$2, [2,1]$\times$2 & [1,3]$\times$3, [1,1], [0,4]$\times$2 & [1,3], [1,1]$\times$3 \\
\hline
 B & [2,4]$\times$2 & [1,2]$\times2$, [0,5]$\times$2 & [1,2]$\times2$ \\
\hline
 C & [2,8] & [0,7]$\times$2, [0,1]$\times$2 & [0,1]$\times$4 \\
\hline
 D & [2,6], [2,1]$\times$2 & [1,1]$\times$5, [0,3]$\times$2, [0,2]$\times$2 & [1,1]$\times$5 \\
\hline
 (E,J) & [2,2]$\times$4 & [1,2]$\times4$ & [1,2]$\times4$ \\
\hline
 (F,I) & [2,8] & [0,7]$\times$2, [0,1]$\times$2 & [0,1]$\times$4 \\
\hline
 G & [2,3]$\times$2, [2,1]$\times$2 & [1,3], [1,1]$\times$5, [0,3]$\times$2 & [1,3], [1,1]$\times$5 \\
\hline
 H & [2,4]$\times$2 & [2,3]$\times$2, [1,14] & [2,1]$\times$2 \\
\hline
 K & [2,2]$\times$2, [2,1]$\times$4 & [1,1]$\times$10 & [1,1]$\times$10 \\
\hline
 L & [2,2]$\times$2, [2,1]$\times$4 & [1,1]$\times$10 & [1,1]$\times$10 \\
\hline
 M & [3,4] & [0,1]$\times$6 & [0,1]$\times$6 \\
\hline
 (N,P) & [2,2]$\times$4 & [1,2]$\times4$ & [1,2]$\times4$ \\
\hline
 O & [2,8] & [0,7]$\times$2 & [0,1]$\times$2 \\
\hline
 Q & [2,2]$\times3$, [2,1]$\times$2 & [1,2]$\times$2, [1,1]$\times5$ & [1,2]$\times$2, [1,1]$\times5$ \\
\hline
 R & [2,4]$\times$2 & [1,4], [0,7]$\times$2, [0,1]$\times$2 & [1,4], [0,1]$\times$2 \\
\hline
 S & [2,8] & [2,4], [0,2]$\times$2 & [2,4] \\
\hline
 (T,U) & [2,4]$\times2$ & [1,1]$\times$4, [0,6]$\times$2, [0,4]$\times$2 & [1,1]$\times$4 \\
\hline
 V & [2,2]$\times3$, [2,1]$\times$2 & [1,2]$\times$2, [1,1]$\times5$  & [1,2]$\times$2, [1,1]$\times5$  \\
\hline
 (W,Z) & [3,4] & [0,1]$\times$2 & [0,1]$\times$2 \\
\hline
 X & [2,4]$\times$2 & [2,3]$\times$2, [1,14] & [2,1]$\times$2 \\
\hline
 Y & [3,4] & [0,1]$\times$4 & [0,1]$\times$4 \\
\hline
\end{tabularx}

\caption{Discrete invariants for Koszul Artin-Schelter regular algebras of dimension four from \cite{MR2529094}.}\label{discreteinvlambda22:tab1}
\end{table}

\clearpage

\pagebreak

\begin{table}[ht!]
\centering
\renewcommand{\arraystretch}{1.2}  
\small 

\begin{tabularx}{\textwidth}{|c|>{\RaggedRight\arraybackslash}X|
                                >{\RaggedRight\arraybackslash}X|
                                >{\RaggedRight\arraybackslash}X|}
                                
\hline
\textbf{Algebra} & \(\boldsymbol{\mathrm{dimdegPC}}(X)\) & \(\boldsymbol{\mathrm{dimdegPC}}(X^{\mathrm{sing}})\) & \(\boldsymbol{\mathrm{dimdegPC}}(\bar{X}^{\mathrm{sing}})\) \\
\hline
$L(1,1,2)$ & [3,2]$\times$2 & [2,2], [0,1]$\times$2 & [2,2], [0,1]$\times$2     \\
\hline
$E(3)$ & [3,4] & [1,3],[0,4] &  [1,1]    \\
\hline
4D Sklyanin	& [3,4] &  [0,1]$\times$6 &  [0,1]$\times$6        \\
\hline
Vancliff & [2,4], [2,1]$\times$4  & [1,1]$\times$8, [0,2]$\times$4 &  [1,1]$\times$8    \\
\hline
Vancliff twist & [2,4], [2,1]$\times$4  & [1,1]$\times$8, [0,2]$\times$4 &  [1,1]$\times$8    \\
\hline
Shelton-Tingey & [3,2]$\times$2 & [2,2] &  [2,2]     \\
\hline
$S_\infty$ & [3,4] & [0,1]$\times$2 &  [0,1]$\times$2    \\
\hline
$S_\infty$ twist & [3,2]$\times$2 & [2,2] & [2,2]     \\
\hline
$S_{d,i}$ $(i=1)$ & [3,4] & [0,1]$\times$6 & [0,1]$\times$6     \\
\hline
$S_{d,i}$ $(i=2)$ & [3,4] & [0,1]$\times$6 & [0,1]$\times$6       \\
\hline
$S_{d,i}$ $(i=3)$ & [3,4] & [0,1]$\times$6 & [0,1]$\times$6       \\
\hline
$S_{d,i}$ $(i=4)$ & [3,4] & [0,1]$\times$6 & [0,1]$\times$6       \\
\hline
$S_{d,i}$ $(i=5)$ & [3,4]  & [0,1]$\times$6 & [0,1]$\times$6       \\
\hline
$S_{d,i}$ $(i=6)$ & [3,4] & [0,1]$\times$6 & [0,1]$\times$6      \\
\hline
$S_{d,i}$ twist $(i=1)$ & [3,2]$\times$2 & [2,2] &  [2,2]     \\
\hline
$S_{d,i}$ twist $(i=3)$ & [3,2]$\times$2  & [2,2] &  [2,2]     \\
\hline
$S_{d,i}$ twist $(i=5)$ & [3,2]$\times$2 & [2,2] &  [2,2] \\
\hline
$\mathbb{K}_Q[x_1, x_2, x_3, x_4]$	 & [2,1]$\times$8 &  [1,1]$\times$12  & [1,1]$\times$12    \\
\hline
Kirkman R & [3,2]$\times$2 & [2,2] &  [2,2]    \\
\hline
Kirkman S & [3,4] & [1,1], [0,2]$\times2$ &   [1,1]    \\
\hline
Kirkman T & [3,4] & [0,1] &  [0,1]     \\
\hline
Cassidy-Vancliff 1 & [3,2]$\times$2 & [2,2] & [2,2]     \\
\hline
Cassidy-Vancliff 2 & [3,4] & [3,2], [0,4] & [3,2]     \\
\hline
Cassidy-Vancliff 3 & [3,2]$\times$2 & [2,1]$\times$2, [1,4] &  [2,1]$\times$2   \\
\hline
$R(3,a)$ & [2,2]$\times$2, [2,4] & [2,1]$\times$2, [1,12], [0,26] & [2,1]$\times$2 \\
\hline
$L(1,1,2)^{\sigma}$ & [3,2]$\times$2 & [2,2], [0,1]$\times$2 & [2,2], [0,1]$\times$2  \\
\hline
$A_5$ & [3,4] & [1,2], [0,3]$\times$2 & [1,1]\\
\hline
\end{tabularx} 

\caption{Discrete invariants for Koszul Artin-Schelter regular algebras of dimension four from \cite{MR1429334, Stafford1994, Davies_2016, ExoticElliptic, Cassidy2009Generalizations, Pym2013PoissonSA, LECOUTRE_2017, Shelton2001OnKA}.}\label{discreteinvlambda22:tab2}
\end{table}

\pagebreak

\begin{table}[ht!]
\centering
\renewcommand{\arraystretch}{1.4}  
\small 

\begin{tabularx}{\textwidth}{|c|>{\RaggedRight\arraybackslash}X|
                                >{\RaggedRight\arraybackslash}X|
                                >{\RaggedRight\arraybackslash}X|}
                                
\hline
\textbf{Algebra} & \(\boldsymbol{\mathrm{dimdegPC}}(X)\) & \(\boldsymbol{\mathrm{dimdegPC}}(X^{\mathrm{sing}})\) & \(\boldsymbol{\mathrm{dimdegPC}}(\bar{X}^{\mathrm{sing}})\) \\
\hline
\(\mathcal{F}_{\substack{(0,-1,-1,2)\\(0,0,0,0)}}\) & [2,1]$\times$8 & [1,1]$\times$12 & [1,1]$\times$12 \\
\hline
\(\mathcal{F}_{\substack{(-1,-1,1,1)\\(0,0,0,0)}}\) & [2,4], [2,1]$\times$4 & [1,1]$\times$8, [0,2]$\times$4 & [1,1]$\times$8\\
\hline
\(\mathcal{F}_{\substack{(0,-1,-1,2)\\(0,0,0,0)\\(2,-1,-1,0)}}\) & [3,2]$\times$2 & [2,2], [0,1]$\times$2 & [2,2], [0,1]$\times$2 \\
\hline
\(\mathcal{F}_{\substack{(0,-1,-1,2)\\(-1,0,-1,2)\\(0,0,0,0)}}\) & [2,1]$\times$8 & [1,1]$\times$12 & [1,1]$\times$12 \\
\hline
\(\mathcal{F}_{\substack{(0,-1,-1,2)\\(0,0,0,0)\\(-1,-1,2,0)}}\) & [2,1]$\times$8 & [1,1]$\times$12 & [1,1]$\times$12 \\
\hline
OreExt Type \(A_1\) & [3,4] & [0,1]$\times$6 & [0,1]$\times$6 \\
\hline
OreExt Type \(A_2\) & [3,4] & [0,1]$\times$6 & [0,1]$\times$6 \\
\hline
OreExt Type \(A_3\) & [3,4] & [0,1]$\times$6 & [0,1]$\times$6 \\
\hline
OreExt Type \(B_1\)  & [3,2]$\times$2 & [2,1]$\times$2, [1,4] & [2,1]$\times$2 \\
\hline
OreExt Type \(E_1\) & [2,6], [2,1]$\times$2 & [1,3]$\times$2 & [1,3]$\times$2 \\
\hline
OreExt Type \(E_2\) & [2,6], [2,1]$\times$2 & [1,3]$\times$2 & [1,3]$\times$2 \\
\hline
OreExt Type $H.$I & [2,2]$\times$3, [2,1]$\times$2 & [2,1]$\times$2, [1,5], [1,2]$\times$2, [1,1]$\times$2 & [2,1]$\times$2,[1,1]$\times$2 \\
\hline
OreExt Type $H.$II  & [2,2]$\times$3, [2,1]$\times$2 & [2,1]$\times$2, [1,5], [1,2]$\times$2, [1,1]$\times$2 & [2,1]$\times$2,[1,1]$\times$2 \\
\hline
OreExt Type \(S_1^{'}\)  & [2,1]$\times$8 & [1,1]$\times$12 & [1,1]$\times$12 \\
\hline
OreExt Type \(S_2\)  & [2,2]$\times$2, [2,1]$\times$4 & [1,1]$\times$10 & [1,1]$\times$10 \\
\hline
CentExt Type $E$  & [3,4] & [0,1]$\times$6 & [0,1]$\times$6 \\
\hline
CentExt Type $H$  & [3,2]$\times$2 & [2,1]$\times$2, [1,4] & [2,1]$\times$2 \\
\hline
CentExt Type \(S_1\) & [3,4] & [0,1]$\times$6 & [0,1]$\times$6 \\
\hline
CentExt Type \(S_1^{'}\)  & [3,2]$\times$2 & [2,2], [0,1]$\times$2 & [2,2], [0,1]$\times$2 \\
\hline
CentExt Type \(S_2\) & [3,4] & [0,1]$\times$6 & [0,1]$\times$6 \\
\hline
\end{tabularx} 
\caption{Discrete invariants for Koszul Artin-Schelter regular algebras of dimension four from \cite{grimley2016hochschild, MR1429334}.}\label{discreteinvlambda22:tab3}
\end{table}



\begin{remark}
Note that \(w_{(2,2)} = 0\) for the following algebras: \\
Central Extensions, 4D Sklyanin twist, $S_{d,i}$ twist $(i=2,4,6)$, $\mathbb{K}[x_1,x_2,x_3,x_4]$, Clifford,  $S(2,3)$, Central Extensions twist, $S(2,3)^{\sigma}$, Caines Algebra, Lie Algebra 1, Lie Algebra 2, Lie Algebra 3, Lie Algebra 4, $\mathfrak{sl_2}$ and CentExt Type $B$
\end{remark}

\begin{remark}
For all algebras in Tables~\ref{discreteinvlambda22:tab1}, \ref{discreteinvlambda22:tab2}, and \ref{discreteinvlambda22:tab3}, we have 
 $X = \bar{X},$ except for H, S, X, Cassidy-Vancliff 2, $R(3,a)$, OreExt Type $H.$I and OreExt Type $H.$II.
\end{remark}

\clearpage

\pagebreak

The following examples are for \(X = X_{020}\).




\begin{example}[Algebra C]\leavevmode
\begin{itemize}
    \item \(\mathrm{dimdegPC}(X) = [2,8]\)
    \item \(\mathrm{dimdegPC}(X^{\mathrm{sing}}) = [0,7]\times 2, [0,1]\times 2\)
    \item \(\mathrm{dimdegPC}(\bar{X}^{\mathrm{sing}}) = [0,1]\times 4\)
\end{itemize}

\(X\) is an irreducible surface of degree 8. It is singular at four points.
\end{example}

\begin{example}[Algebra M]\leavevmode
\begin{itemize}
    \item \(\mathrm{dimdegPC}(X) = [3,4]\)
    \item \(\mathrm{dimdegPC}(X^{\mathrm{sing}}) = [0,1]\times 6\)
    \item \(\mathrm{dimdegPC}(\bar{X}^{\mathrm{sing}}) = [0,1]\times 6\)
\end{itemize}

\(X\) is an irreducible 3-dimensional quartic surface, i.e., a quartic threefold. It is singular at six points.
\end{example}




\clearpage
\pagebreak

\subsection{Discrete invariants for \(\boldsymbol{\lambda = (2,1,1)}\)}
\noindent

\begin{table}[ht!]
\centering
\renewcommand{\arraystretch}{1.3}  
\small 

\begin{tabularx}{\textwidth}{|c|>{\RaggedRight\arraybackslash}X|
                                >{\RaggedRight\arraybackslash}X|
                                >{\RaggedRight\arraybackslash}X|}

\hline
\textbf{Algebra} & \(\boldsymbol{\mathrm{dimdegPC}}(X)\) & \(\boldsymbol{\mathrm{dimdegPC}}(X^{\mathrm{sing}})\) & \(\boldsymbol{\mathrm{dimdegPC}}(\bar{X}^{\mathrm{sing}})\) \\
\hline
A & [3,12], [3,4]$\times$2 &
[3,6], [2,6]$\times$2, [0,1]$\times$2 &
[3,6]  \\
\hline
B & [2,1]$\times$8, [2,2]$\times$6 & [1,1]$\times$24 &
[1,1]$\times$24 \\
\hline
C & [2,1]$\times$8, [2,2]$\times$6 & [1,1]$\times$24 &
[1,1]$\times$24 \\
\hline
D & [3,6]$\times$2, [3,4]$\times$2 & [2,3]$\times$4, [0,1]$\times$2 & [2,3]$\times$4, [0,1]$\times$2  \\
\hline
(E,J) & [2,16], [2,2]$\times$2 & [1,4]$\times$2 &
[1,4]$\times$2 \\
\hline
(F,I) & [3,6]$\times$2, [3,4]$\times$2 & [2,3]$\times$4, [0,1]$\times$2 & [2,3]$\times$4, [0,1]$\times$2  \\
\hline
G & [3,6]$\times$2, [3,4]$\times$2 & [2,3]$\times$4, [0,1]$\times$2 & [2,3]$\times$4, [0,1]$\times$2  \\
\hline
H & [3,6]$\times$2, [3,4]$\times$2 & [2,3]$\times$4 & [2,3]$\times$4 \\
\hline
K & [3,6]$\times$2, [3,4]$\times$2 & [2,3]$\times$4 & [2,3]$\times$4 \\
\hline
L & [3,6]$\times$2, [3,4]$\times$2 & [2,3]$\times$4 & [2,3]$\times$4 \\
\hline
M & [3,20] & [0,1]$\times$12 & [0,1]$\times$12 \\
\hline
O & [4,20] & [1,2]$\times$2 & [1,2]$\times$2 \\
\hline
Q & [2,3]$\times$4, [2,2]$\times$4 & [2,1]$\times$2, [1,3]$\times$4,[1,2]$\times$2, [1,1]$\times$4, [0,42] & [2,1]$\times$2, [1,1]$\times$4 \\
\hline
R & [2,2]$\times$6, [2,1]$\times$8 & [1,1]$\times$24 &
[1,1]$\times$24 \\
\hline
(T,U) & [4,10]$\times$2 & [3,6], [0,4] & [3,6] \\
\hline
V & [2,3]$\times$4, [2,2]$\times$4 & [2,1]$\times$2, [1,3]$\times$4,[1,2]$\times$2, [1,1]$\times$4, [0,42] & [2,1]$\times$2, [1,1]$\times$4 \\
\hline
(W,Z) & [4,20] & [1,2] & [1,2] \\
\hline
X & [3,6]$\times$2, [3,4]$\times$2 & [2,3]$\times$4 & [2,3]$\times$4 \\
\hline
Y & [4,20] & [1,2] & [1,2] \\
\hline
\end{tabularx}

\caption{Discrete invariants for Koszul Artin-Schelter regular algebras of dimension four from \cite{MR2529094}.}\label{discreteinvlambda211:tab1}
\end{table}

\clearpage

\pagebreak
\begin{table}[ht!]
\centering
\renewcommand{\arraystretch}{1.2}  
\setlength{\tabcolsep}{2pt} 
\small 

\begin{tabularx}{\textwidth}{|c|>{\RaggedRight\arraybackslash}X|
                                >{\RaggedRight\arraybackslash}X|
                                >{\RaggedRight\arraybackslash}X|}
                                
\hline
\textbf{Algebra} & \(\boldsymbol{\mathrm{dimdegPC}}(X)\) & \(\boldsymbol{\mathrm{dimdegPC}}(X^{\mathrm{sing}})\) & \(\boldsymbol{\mathrm{dimdegPC}}(\bar{X}^{\mathrm{sing}})\) \\
\hline
Vancliff & [2,1]$\times$8, [2,2]$\times$6 & [1,1]$\times$24 & [1,1]$\times$24 \\
\hline
Vancliff twist & [2,1]$\times$8, [2,2]$\times$6 & [1,1]$\times$24 & [1,1]$\times$24 \\
\hline
$S_\infty$ & [4,20] & [1,2]$\times$2 & [1,2]$\times$2 \\
\hline
$S_{d,i} (i=1,2,3,4,5,6)$ & [2,6]$\times$2, [2,1]$\times$4, [2,2]$\times$2 & [1,1]$\times$16 & [1,1]$\times$16  \\
\hline
$\mathbb{K}_Q[x_1, x_2, x_3, x_4]$ & [2,1]$\times$8, [2,2]$\times$6 & [1,1]$\times$24 & [1,1]$\times$24 \\
\hline
Central Extensions twist & [4,10]$\times$2 & [3,6] & [3,6] \\
\hline
Kirkman S & [4,10]$\times$2 & [3,6] & [3,6] \\
\hline
Kirkman T & [4,20] & [1,2] & [1,2] \\
\hline
$R(3,a)$ & [2,12], [2,4]$\times$2 &
[2,10], [2,3]$\times$2, [1,20]$\times$2 &
[2,1]$\times$2, [2,2]  \\
\hline
$L(1,1,2)^{\sigma}$ & [3,20] & [0,1]$\times$12 & [0,1]$\times$12  \\
\hline
$S(2,3)^{\sigma}$ & [4,10]$\times$2 & [3,6], [0,4] & [3,6]  \\
\hline 
$A_5$ & [3,6]$\times$2, [3,4]$\times$2 &
[2,3]$\times$2, [2,2]$\times$2, [2,1]$\times$2, [1,4]$\times$2 & [2,3]$\times$2, [2,2]$\times$2, [2,1]$\times$2 \\
\hline
\(\mathcal{F}_{\substack{(0,-1,-1,2)\\(0,0,0,0)}}\) & [2,1]$\times$8, [2,2]$\times$6 & [1,1]$\times$24 & [1,1]$\times$24 \\
\hline
\(\mathcal{F}_{\substack{(-1,-1,1,1)\\(0,0,0,0)}}\) & [2,1]$\times$8, [2,2]$\times$6 & [1,1]$\times$24 & [1,1]$\times$24 \\
\hline
\(\mathcal{F}_{\substack{(0,-1,-1,2)\\(0,0,0,0)\\(2,-1,-1,0)}}\) & [3,20] & [0,1]$\times$12 & [0,1]$\times$12\\
\hline
\(\mathcal{F}_{\substack{(0,-1,-1,2)\\(-1,0,-1,2)\\(0,0,0,0)}}\) & [2,1]$\times$8, [2,2]$\times$6 & [1,1]$\times$24 & [1,1]$\times$24 \\
\hline
\(\mathcal{F}_{\substack{(0,-1,-1,2)\\(0,0,0,0)\\(-1,-1,2,0)}}\) & [2,1]$\times$8, [2,2]$\times$6 & [1,1]$\times$24 & [1,1]$\times$24 \\
\hline 
\end{tabularx} 

\caption{Discrete invariants for Koszul Artin-Schelter regular algebras of dimension four from \cite{MR1429334, Stafford1994, Davies_2016, ExoticElliptic, Cassidy2009Generalizations, grimley2016hochschild, LECOUTRE_2017, Shelton2001OnKA}.}\label{discreteinvlambda211:tab2}
\end{table}

\begin{table}[ht!]
\centering
\renewcommand{\arraystretch}{1.4}  
\small 

\begin{tabularx}{\textwidth}{|c|>{\RaggedRight\arraybackslash}X|
                                >{\RaggedRight\arraybackslash}X|
                                >{\RaggedRight\arraybackslash}X|}

\hline
\textbf{Algebra} & \(\boldsymbol{\mathrm{dimdegPC}}(X)\) & \(\boldsymbol{\mathrm{dimdegPC}}(X^{\mathrm{sing}})\) & \(\boldsymbol{\mathrm{dimdegPC}}(\bar{X}^{\mathrm{sing}})\) \\
\hline
Lie Algebra 2 & [4,10]$\times$2 &  [3,6], [0,4] & [3,6] \\
\hline
Lie Algebra 3 & [4,10]$\times$2 &  [3,6], [0,4] & [3,6] \\
\hline
Lie Algebra 4 & [4,10]$\times$2 &  [3,6], [0,4] & [3,6] \\
\hline
OreExt Type \(A_1\) & [2,1]$\times$8, [2,2]$\times$6 & [1,1]$\times$24 & [1,1]$\times$24  \\
\hline
OreExt Type \(A_2\) & [2,1]$\times$8, [2,2]$\times$6 & [1,1]$\times$24 & [1,1]$\times$24 \\
\hline
OreExt Type \(A_3\)  & [2,1]$\times$8, [2,2]$\times$6 & [1,1]$\times$24 & [1,1]$\times$24 \\
\hline
OreExt Type \(B_1\) & [4,10]$\times$2 & [3,6], [0,4] & [3,6] \\
\hline
OreExt Type \(E_1\)  & [3,20] & [0,1]$\times$6 & [0,1]$\times$6 \\
\hline
OreExt Type \(E_2\)  & [3,20] & [0,1]$\times$6 & [0,1]$\times$6 \\
\hline
OreExt Type $H.$I  & [3,6]$\times$2, [3,4]$\times$2 &
[2,3]$\times$2, [2,2]$\times$2, [2,1]$\times$2, [1,4]$\times$2 & [2,3]$\times$2, [2,2]$\times$2, [2,1]$\times$2 \\
\hline
OreExt Type $H.$II & [3,6]$\times$2, [3,4]$\times$2 &
[2,3]$\times$2, [2,2]$\times$2, [2,1]$\times$2, [1,4]$\times$2 & [2,3]$\times$2, [2,2]$\times$2, [2,1]$\times$2 \\
\hline
OreExt Type \(S_1^{'}\)  & [2,1]$\times$8, [2,2]$\times$6 & [1,1]$\times$24 & [1,1]$\times$24 \\
\hline
OreExt Type \(S_2\) & [3,6]$\times$2, [3,4]$\times$2 &
[2,3]$\times$4 & [2,3]$\times$4 \\
\hline
CentExt Type $B$  & [4,10]$\times$2 & [3,6], [0,4] & [3,6] \\
\hline
CentExt Type $E$ & [3,20] & [0,1]$\times$6 & [0,1]$\times$6 \\
\hline
CentExt Type $H$ & [3,6]$\times$2, [3,4]$\times$2 &
[2,3]$\times$2, [2,2]$\times$2, [2,1]$\times$2, [1,4]$\times$2 & [2,3]$\times$2, [2,2]$\times$2, [2,1]$\times$2 \\
\hline
CentExt Type \(S_1\)  & [3,20] & [0,1]$\times$12 & [0,1]$\times$12 \\
\hline
CentExt Type \(S_1^{'}\)  & [3,20] & [0,1]$\times$12 & [0,1]$\times$12 \\
\hline
CentExt Type \(S_2\)  & [3,6]$\times$2, [3,4]$\times$2 &
[2,3]$\times$4 & [2,3]$\times$4 \\
\hline
\end{tabularx} 
\caption{Discrete invariants for Koszul Artin-Schelter regular algebras of dimension four from \cite{Jacobson, MR1429334}.}\label{discreteinvlambda211:tab3}
\end{table}



\FloatBarrier

\begin{remark}
Note that \(w_{(2,1,1)} = 0\) for the following algebras: \\
N, P, S, Central Extensions, $L(1,1,2)$, $E(3)$, 4D Sklyanin, 4D Sklyanin twist, Shelton-Tingey, $S_\infty$ twist, $S_{d,i}$ twist ($i=1,\ldots,6$), $\mathbb{K}[x_1,x_2,x_3,x_4]$,  Clifford, $S(2,3)$, Kirkman R, Cassidy-Vancliff 1, Cassidy-Vancliff 2, Cassidy-Vancliff 3, Caines Algebra, Lie Algebra 1, and \(\mathfrak{sl}_2\).
\end{remark}

\begin{remark}
For all algebras in Tables~\ref{discreteinvlambda211:tab1}, \ref{discreteinvlambda211:tab2}, and \ref{discreteinvlambda211:tab3}, we have $X = \bar{X},$ except for A, Q, V, and $R(3,a)$.
\end{remark}

\clearpage

\pagebreak

\subsection{Discrete invariants for \(\boldsymbol{\lambda = (3,1)}\)}
\noindent

Because the Magma code for the partition \((3,1)\), that is, those arising from \(X_{210}\), did not terminate, certain entries could not be computed. These missing entries are indicated by \textbf{\texttt{*****}}. As a result, invariants from this partition are used only in a few cases when distinguishing the algebras.

\begin{table}[ht!]
\centering
\renewcommand{\arraystretch}{1.2}  
\small 

\begin{tabularx}{\textwidth}{|c|>{\RaggedRight\arraybackslash}X|
                                >{\RaggedRight\arraybackslash}X|
                                >{\RaggedRight\arraybackslash}X|}
\hline
\textbf{Algebra} & \(\boldsymbol{\mathrm{dimdegPC}}(\bar{X})\) & \(\boldsymbol{\mathrm{dimdegPC}}(X^{\mathrm{sing}})\) & \(\boldsymbol{\mathrm{dimdegPC}}(\bar{X}^{\mathrm{sing}})\) \\
\hline
A  & [3,24], [3,11], [3,6], [3,5]$\times$3  & \textbf{\texttt{*****}} &   [3,11], [3,5]$\times$2, [2,5], [2,3]$\times$2   \\
\hline
 B & [3,5]$\times$2, [2,6]$\times$6, [2,4]$\times$4 & [2,1]$\times$4, [1,5]$\times$2, [1,1]$\times$8, [0,17]$\times$2, [0,18]$\times$2  &[2,1]$\times$4, [1,1]$\times$10   \\
\hline
 C  & [3,67], [3,5]$\times$2  & \textbf{\texttt{*****}}  & [2,8]$\times$2, [2,1]$\times$4    \\
\hline
 D  & [3,6], [3,5]$\times$4, [2,6]$\times$3, [2,4]$\times$2, [2,3]$\times$2 & [2,5], [2,9], [2,3]$\times$3, [2,1]$\times$5, [1,17]$\times$2, [1,16], [1,12], [1,11], [1,10], [1,7], [1,5], [0,44]$\times$2, [0,29]$\times$2, [0,72], [0,20]  &  [2,3]$\times$4, [2,1]$\times$6, [1,1]     \\
\hline
 (E,J) &  \textbf{\texttt{*****}} &  \textbf{\texttt{*****}}   & \textbf{\texttt{*****}}   \\
\hline
 (F,I) & [3,5]$\times$2, [2,10]$\times$2, [2,6]$\times$2, [2,4]$\times$4, [2,3]$\times$4  & [2,4]$\times$2, [2,1]$\times$4, [1,10], [1,7], [1,4]$\times$2, [1,3], [1,5], [1,1]$\times$6, [0,5]$\times2$, [0,21], [0,55], [0,47]  & [2,4]$\times$2, [2,1]$\times$4, [1,1]$\times$7    \\
\hline
 G  & [3,24], [3,11]$\times$2, [3,6], [3,5]$\times$4 & \textbf{\texttt{*****}}       &  [3,5], [2,7]$\times$2, [2,6], [2,5], [2,4]$\times$2, [2,3]$\times$3, [2,1]$\times$2    \\
\hline
H  & [3,6], [3,5]$\times2$, [2,7]$\times$2, [2,4]$\times$2 & \textbf{\texttt{*****}}  & [3,6], [2,3]$\times$2, [2,1]    \\
\hline
 K  & [3,6]$\times$2, [3,5]$\times$2, [2,14], [2,8] & [2,4]$\times$2, [2,3]$\times$2, [2,6], [1,16], [1,8]$\times$2, [1,12], [1,4]$\times$4, [0,20], [0,94], [0,168], [0,216]&  [2,6], [2,3]$\times$2, [2,1]$\times$2, [1,4]$\times$4    \\
\hline
 L  &  [3,6]$\times$2, [3,5]$\times$2, [2,14], [2,4]$\times$2 & [2,4]$\times$2, [2,3]$\times$4, [1,16], [1,8]$\times$2, [1,12], [1,4]$\times$2, [1,2]$\times$4, [0,49]$\times$2, [0,168], [0,276]  &[2,3]$\times$4, [2,1]$\times$2, [1,4]$\times$2, [1,2]$\times$4     \\
\hline
 M  & [3,67], [3,5]$\times$2 & \textbf{\texttt{*****}} & [2,3]$\times$4, [2,1]$\times$8    \\
\hline
 (N,P) &  [3,67], [3,5]$\times$2 & \textbf{\texttt{*****}} & [2,8], [2,3]$\times$2, [2,1]$\times$6    \\
\hline
 O  & [3,67], [3,5]$\times$2 & \textbf{\texttt{*****}} & [2,3]$\times$4, [2,1]$\times$8    \\
\hline
 Q  & \textbf{\texttt{*****}} & \textbf{\texttt{*****}} & [2,3], [2,1], [1,2]$\times$4, [1,1]$\times$11   \\
\hline
 R  & \textbf{\texttt{*****}} & \textbf{\texttt{*****}} & [1,1]$\times$18    \\
\hline
 S  & [3,35], [3,16]$\times$2, [3,5]$\times$2 &  \textbf{\texttt{*****}}  & [2,6]$\times$2, [2,4], [2,3]$\times$4, [2,1]$\times$6     \\
\hline
\end{tabularx} 
\caption{Discrete invariants for Koszul Artin-Schelter regular algebras of dimension four from \cite{MR2529094}.}
\end{table}

\pagebreak

\begin{table}[ht!]
\centering
\renewcommand{\arraystretch}{1.2}  
\setlength{\tabcolsep}{2pt} 
\small 

\begin{tabularx}{\textwidth}{|c|>{\RaggedRight\arraybackslash}X|
                                >{\RaggedRight\arraybackslash}X|
                                >{\RaggedRight\arraybackslash}X|}

\hline
\textbf{Algebra} & \(\boldsymbol{\mathrm{dimdegPC}}(\bar{X})\) & \(\boldsymbol{\mathrm{dimdegPC}}(X^{\mathrm{sing}})\) & \(\boldsymbol{\mathrm{dimdegPC}}(\bar{X}^{\mathrm{sing}})\) \\
\hline
 (T,U)  & \textbf{\texttt{*****}}  & \textbf{\texttt{*****}} &[2,3]$\times$2, [2,1]$\times$2, [1,1]$\times$9    \\
 \hline
 V  & \textbf{\texttt{*****}} & \textbf{\texttt{*****}}  & [2,3], [2,1], [1,2]$\times$4,  [1,1]$\times$11      \\
\hline
 (W,Z)   & [3,30], [3,16]$\times$2, [3,5]$\times$2  & \textbf{\texttt{*****}} & [3,5], [2,10]$\times$2, [2,8], [2,1]$\times$2   \\
\hline
 X  & [3,6], [3,5]$\times$2, [2,7]$\times$2, [2,4]$\times$2  & \textbf{\texttt{*****}} &  [3,6], [2,3]$\times$2, [2,1]   \\
\hline
 Y   & [3,24], [3,11]$\times$2, [3,6], [3,5]$\times$4  & \textbf{\texttt{*****}}  &  [3,5], [2,7]$\times$2, [2,6], [2,5], [2,4]$\times$2, [2,3]$\times$3, [2,1]$\times$2     \\
\hline
Central Extensions  & [4,56] &  [1,9]  & [1,9]  \\
\hline
$L(1,1,2)$  & [4,56] & [2,1]$\times$2, [1,4]$\times$2, [1,2]$\times$4, [1,3]$\times$2, [1,1]  & [2,1]$\times$2, [1,4]$\times$2, [1,1]    \\
\hline
$E(3)$  & [4,56] & [2,1], [1,14], [1,4], [1,1], [0,83]   &  [2,1], [1,4], [1,1]    \\
\hline
4D Sklyanin	 & [4,56]  &  [1,12]   &  [1,12]      \\
\hline
4D Sklyanin twist & [4,56] & [ ]   &  [ ]  \\
\hline
Vancliff  &  \textbf{\texttt{*****}} & \textbf{\texttt{*****}}  &  [2,3]$\times$6, [2,1]$\times$4    \\
\hline
Vancliff twist  & \textbf{\texttt{*****}} & \textbf{\texttt{*****}} & [2,3]$\times$6, [2,1]$\times$4      \\
\hline
Shelton-Tingey  & [4,56]  & [ ] &  [ ]      \\
\hline
$S_{d,i} (i=1,2,3,4,5,6)$ &  \textbf{\texttt{*****}} & \textbf{\texttt{*****}}    & [2,3]$\times$2, [2,1]$\times$2, [1,8]$\times$2, [1,1]$\times$2, [0,1]$\times$6    \\
\hline
$S_{d,i}$ twist $(i=2,4,6)$	  &  [4,56] & [ ] & [ ]      \\
\hline
$\mathbb{K}_Q[x_1, x_2, x_3, x_4]$  &  [3,6]$\times$4, [3,5]$\times$6  & \textbf{\texttt{*****}}  &[2,3]$\times$12, [2,1]$\times$4    \\
\hline
 $S(2,3)$  & [4,56] & \textbf{\texttt{*****}}  & [2,1], [1,1]$\times$7     \\
\hline
Cassidy-Vancliff 1  & [4,56] & [ ]  &  [ ]   \\
\hline
$R(3,a)$  & \textbf{\texttt{*****}} & \textbf{\texttt{*****}} & [2,3], [2,1] \\
\hline
$L(1,1,2)^{\sigma}$  & [3,6]$\times$2, [3,5], [2,3]$\times$6, [2,4], [2,1]$\times$2 &  \textbf{\texttt{*****}}  & [2,3]$\times$2, [2,1]$\times$2, [1,1]$\times$6, [0,1]$\times$6   \\
\hline
$S(2,3)^{\sigma}$  & [4,56] & \textbf{\texttt{*****}}  & [2,1], [1,1]$\times$7 \\
\hline
$A_5$  & \textbf{\texttt{*****}} & \textbf{\texttt{*****}} & [2,1], [1,14], [1,2]$\times$3, [1,1]$\times$6  \\
\hline
Caines Algebra  & [4,56] & [1,1], [0,7], [0,1]$\times$2    & [1,1], [0,1]$\times$2 \\
\hline
\(\mathcal{F}_{\substack{(0,-1,-1,2)\\(0,0,0,0)}}\) & [3,6]$\times$2, [3,5]$\times$3, [2,14], [2,4], [2,3]$\times$2  &  \textbf{\texttt{*****}}       & [2,3]$\times$6, [2,1]$\times$3, [1,4], [1,1]$\times$3 \\
\hline
\(\mathcal{F}_{\substack{(-1,-1,1,1)\\(0,0,0,0)}}\)  & [3,6]$\times$2, [3,5]$\times$5, [2,14], [2,4], [2,3]$\times$2 &  \textbf{\texttt{*****}}     &  [2,3]$\times$6, [2,1]$\times$4  \\
\hline
\(\mathcal{F}_{\substack{(0,-1,-1,2)\\(0,0,0,0)\\(2,-1,-1,0)}}\)  & [3,6]$\times$2, [3,5],[2,6]$\times$2, [2,4], [2,3]$\times$2, [2,2] & \textbf{\texttt{*****}}   & [2,3]$\times$2, [2,1]$\times$2, [1,1]$\times$6, [0,1]$\times$6 \\
\hline
\(\mathcal{F}_{\substack{(0,-1,-1,2)\\(-1,0,-1,2)\\(0,0,0,0)}}\)  & [3,6], [3,5]$\times$3, [2,16], [2,4]$\times$2, [2,3]$\times$4 & \textbf{\texttt{*****}}     & [2,3]$\times$6, [2,1]$\times$3, [1,1]$\times$2 \\
\hline
\end{tabularx} 
\caption{Discrete invariants for Koszul Artin-Schelter regular algebras of dimension four from \cite{MR2529094, MR1429334, Stafford1994, Davies_2016, ExoticElliptic, Cassidy2009Generalizations, grimley2016hochschild, LECOUTRE_2017, Shelton2001OnKA}.}
\end{table}

\begin{table}[ht!]
\centering
\renewcommand{\arraystretch}{1.2}
\small 

\begin{tabularx}{\textwidth}{|c|>{\RaggedRight\arraybackslash}X|
                                >{\RaggedRight\arraybackslash}X|
                                >{\RaggedRight\arraybackslash}X|}
\hline
\textbf{Algebra} & \(\boldsymbol{\mathrm{dimdegPC}}(\bar{X})\) & \(\boldsymbol{\mathrm{dimdegPC}}(X^{\mathrm{sing}})\) & \(\boldsymbol{\mathrm{dimdegPC}}(\bar{X}^{\mathrm{sing}})\) \\
\hline
\(\mathcal{F}_{\substack{(0,-1,-1,2)\\(0,0,0,0)\\(-1,-1,2,0)}}\)  & \textbf{\texttt{*****}} &  \textbf{\texttt{*****}} & [2,3]$\times$2, [2,1]$\times$2, [1,4], [1,1]$\times$5 \\ 
\hline
Lie Algebra 1  & [4,16]$\times$2, [4,24]  & [3,20], [3,6]$\times$2   & [3,5], [3,6]$\times$2 \\
\hline
Lie Algebra 2 & [4,16]$\times$2, [4,24] & [3,5]$\times$2, [3,6], [3,11], [3,12]  & [3,5]$\times$2, [3,6], [3,11] \\
\hline
Lie Algebra 3  & [4,16], [4,40] & [3,5]$\times$2, [3,6], [2,18]$\times$2, [2,5], [2,1] &  [3,6], [3,5]$\times$2, [1,1] \\
\hline
Lie Algebra 4  & [4,16], [4,40] &  [3,15], [3,6], [2,24], [1,7], [1,1]     &  [3,6], [3,5], [1,1] \\
\hline
\(\mathfrak{sl}_2\)  & [4,16], [4,40]& [3,6], [3,10], [2,24] & [3,6], [3,10] \\
\hline
OreExt Type \(A_1\)  & \textbf{\texttt{*****}} & [2,4], [1,21], [1,1]$\times$3, [0,1]$\times$18  & [2,1], [1,21], [1,1]$\times$3   \\
\hline
OreExt Type \(A_2\) & \textbf{\texttt{*****}} & [2,4], [1,21], [1,1]$\times$3, [0,1]$\times$18  & [2,1], [1,21], [1,1]$\times$3   \\
\hline
OreExt Type \(A_3\)  & \textbf{\texttt{*****}} & [2,4], [1,21], [1,1]$\times$3, [0,1]$\times$18  & [2,1], [1,21], [1,1]$\times$3   \\
\hline
OreExt Type \(B_1\)  & [3,35], [3,21], [3,15], [3,6] & \textbf{\texttt{*****}}   &  [2,12], [2,9]$\times$2, [2,4], [2,3]$\times$2, [2,1]$\times$4, [1,2] \\
\hline
OreExt Type \(E_1\)   & \textbf{\texttt{*****}} & \textbf{\texttt{*****}}  &  \textbf{\texttt{*****}} \\
\hline
OreExt Type \(E_2\) & \textbf{\texttt{*****}} &  \textbf{\texttt{*****}}  &  \textbf{\texttt{*****}} \\
\hline
OreExt Type $H.$I   & \textbf{\texttt{*****}}   & \textbf{\texttt{*****}}   & [2,1], [1,10], [1,2]$\times$3, [1,1]$\times$10 \\
\hline
OreExt Type $H.$II   & \textbf{\texttt{*****}}   & \textbf{\texttt{*****}}   & [2,1], [1,10], [1,2]$\times$3, [1,1]$\times$10 \\
\hline
OreExt Type \(S_1^{'}\)  & [3,6]$\times$2, [3,5]$\times$3, [2,14], [2,4], [2,3]$\times$2 & \textbf{\texttt{*****}}   &   [2,3]$\times$6, [2,1]$\times$3, [1,4], [1,1]$\times$3 \\
\hline
OreExt Type \(S_2\)  & [3,6]$\times$2, [3,5]$\times$2, [2,14], [2,4]$\times$2 &  \textbf{\texttt{*****}}   & [2,3]$\times$4, [2,1]$\times$2, [1,4]$\times$2, [1,2]$\times$4 \\
\hline
CentExt Type $B$   & [4,56] &  [1,5], [1,8], [1,2], [1,1]$\times$2   &   [1,5], [1,8], [1,2], [1,1]$\times$2\\
\hline
CentExt Type $E$  & [3,15]$\times$2, [3,6], [3,41] & \textbf{\texttt{*****}}  &   \textbf{\texttt{*****}}  \\
\hline
CentExt Type $H$   & [3,35], [3,15], [3,11], [3,10], [3,6] &  \textbf{\texttt{*****}}    &  [2,12], [2,9], [2,8], [2,6], [2,4], [2,3]$\times$3, [2,1]$\times$5 \\
\hline
CentExt Type \(S_1\)   & [3,23], [3,6]$\times$4, [3,5]$\times$6  & \textbf{\texttt{*****}} &[2,6]$\times$4, [2,3]$\times$12, [2,1]$\times$4 \\
\hline
CentExt Type \(S_1^{'}\)  & [3,35], [3,10], [3,6]$\times$2, [3,5]$\times$3 &  \textbf{\texttt{*****}}      & [3,5], [2,8], [2,6], [2,3]$\times$6, [2,1] \\
\hline
CentExt Type \(S_2\) & [3,35], [3,6]$\times$2, [3,5]$\times$6 & \textbf{\texttt{*****}}   & [2,6]$\times$2, [2,4]$\times$4, [2,3]$\times$8, [2,1]$\times$6 \\
\hline
\end{tabularx} 
\caption{Discrete invariants for Koszul Artin-Schelter regular algebras of dimension four from \cite{Jacobson, MR1429334}.}
\end{table}

\FloatBarrier
\vspace{0.5em}

\begin{remark}
Note that \(w_{(3,1)} = 0\) for the following algebras: \\
\(S_\infty\), \(S_\infty\) twist, $S_{d,i}$ twist ($i=1,3,5$), \(\mathbb{K}[x_1,x_2,x_3,x_4]\), Clifford, Central Extensions twist, Kirkman R, Kirkman S, Kirkman T, Cassidy-Vancliff 2, Cassidy-Vancliff 3.
\end{remark}

\section{Distinction of Components}

\begin{theorem}\label{maintheorem-inv}
We consider the algebraic stack $\mathcal{A}_4$ as defined in \cite[page~10]{bhatoy-colin-felix-ravali-components}. This stack has $45$ components (listed in \cite[Table~4]{bhatoy-colin-felix-ravali-components}).
Using computations performed in the computer algebra system Magma~\cite{magma}, we partitioned these components into 39 equivalence classes (referred to as boxes), where each class consists of algebras that share the same discrete invariants coming from the projective schemes \(X_{400}, X_{000}, X_{020}, X_{101}, X_{210}\).

If \(A\) and \(B\) are generic families of algebras lying in different equivalence classes, then \(A \not\simeq B\). It follows that components of \(\mathcal{A}_4\) lying in different equivalence classes (boxes) are distinct.
\end{theorem}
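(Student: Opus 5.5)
The argument rests on showing that each scheme $X_\lambda(w)$, and hence every discrete invariant attached to it, is an isomorphism invariant of the algebra. Everything else is bookkeeping against the tables of Section~5.

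\emph{Step 1: invariance.} Suppose $A\simeq B$ as graded algebras. Since both are generated in degree one, the isomorphism is induced by some $g\in\GL(V)$ with $g^{\otimes 2}(R_A)=R_B$. From the description
\[
w\in A_4^{!*}=\bigcap_{i=0}^{2}V^{\otimes i}\otimes R\otimes V^{\otimes(2-i)},
\]
we get $g^{\otimes 4}(w_A)=c\,w_B$ for some $c\in\mathbb{K}^\times$, using the uniqueness up to scalar in Theorem~\ref{Aisomorphic-Dw2}.

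The $S_4$-action on tensor positions commutes with the diagonal $\GL(V)$-action. Hence each central idempotent $e_\lambda$ commutes with $g^{\otimes 4}$, and $g^{\otimes4}(w_{A,\lambda})=c\,w_{B,\lambda}$ for every $\lambda\vdash 4$. Under the Schur--Weyl and Borel--Weil identifications of Theorems~\ref{schurweylduality} and~\ref{borelweilthm}, $g$ acts $\GL(V)$-equivariantly on $\mathrm{H}^0(G/B,L_\lambda)$. The linear systems therefore correspond: $\widetilde{w}_{B,\lambda}(U_\lambda^*)=g\cdot \widetilde{w}_{A,\lambda}(U_\lambda^*)$.

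Base loci are preserved under this action, so $X_\lambda(w_B)=g\cdot X_\lambda(w_A)$. The same is true after projecting to the partial flag variety $X_{\mathrm{supp}}$ of Proposition~\ref{prop:flag-grassmannian-sections}, because $g$ acts compatibly on each Grassmannian and preserves the Plücker line bundles. Consequently $X_\lambda(w_A)\cong X_\lambda(w_B)$ via a linear automorphism of the ambient projective embedding. Such an automorphism preserves dimensions and degrees of primary components, reduced subschemes, singular subschemes, and the reduced singular subscheme.

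For $\lambda=(2,2)$ one extra check is needed: the splitting via $C_4$ and the maps $\phi_1^0,\phi_1^1$ must be $\GL(V)$-equivariant. This holds because they were constructed from the coproduct action. The vanishing or non-vanishing of $w_\lambda$ is clearly invariant as well.

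\emph{Step 2: passage to the algebraic closure.} The invariants must be the geometric ones, since the computations run over $\mathbb{Q}(f,h,\dots)$. I would invoke Lemma~\ref{lemma:XLreduced}, Lemma~\ref{lemma:galextension-primarycomp}, and the absolute-irreducibility criterion of Lemma~\ref{equidim-absolirred} to see that, after the extensions computed in Section~4, the listed data agree with those of the geometric generic fibre. For generic parameters, constructibility of these data in flat families makes the data constant on a dense open subset of each family. This is what ``generic'' in the statement should mean.

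\emph{Step 3: comparison and conclusion.} For each of the 45 components from \cite[Table~4]{bhatoy-colin-felix-ravali-components}, I would record the tuple of invariants for $X_{400},X_{000},X_{020},X_{101},X_{210}$ from the tables of Section~5. Components sharing a tuple are grouped into boxes, which gives the 39 classes. For $X_{210}$, only the computed (non-\texttt{*****}) entries are used, and only to separate pairs where they are available on both sides.

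If generic members $A$ and $B$ lie in different boxes, some invariant differs, so by Step~1 $A\not\simeq B$. Generic members of distinct components are then non-isomorphic, and the components are distinct.

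The main obstacle is Step~2: ensuring that the computer outputs over function fields faithfully represent the generic geometric fibre. This concerns absolute irreducibility and whether primary decomposition commutes with specialization. I would handle it with the Galois-extension argument of Lemma~\ref{lemma:galextension-primarycomp} together with generic flatness.
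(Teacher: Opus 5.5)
Your Step~1 is the paper's proof, stated more carefully: the isomorphism is induced by some $g$ with $g\cdot w_A$ proportional to $w_B$, and $g$ commutes with the Schur--Weyl projections and acts equivariantly on $\mathrm{H}^0(G/B,L_\lambda)$, so a projective linear automorphism carries one base locus onto the other and preserves the dimension--degree data. You also track the scalar, the $C_4$ splitting and the $\phi$-maps for $\lambda=(2,2)$, and your Steps~2--3 on generic geometric fibres and box bookkeeping are additions the paper leaves implicit. One caution that applies equally to the paper: degrees of \emph{embedded} primary components are not canonical, e.g.\ the $[0,8]$ entries of $X^{\mathrm{sing}}_{400}$ for B, which do not appear in $\bar X^{\mathrm{sing}}$. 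So invariance should be claimed only for isolated components and reduced data, and these still separate every pair of boxes.
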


\begin{proof}
Each algebra \( A \) corresponds to a superpotential \( w_A \in V^{\otimes n} \), where \( \dim V = 4 \). Similarly, \( B \) corresponds to a superpotential \( w_B \in V^{\otimes n} \).

The group \( G = \mathrm{PGL}(V) \) acts naturally on \( V^{\otimes n} \) via diagonal automorphisms. An isomorphism between algebras \( A \) and \( B \) implies the existence of an element \( g \in G \) such that:
\[
w_B = g \cdot w_A.
\]
This action preserves the Schur-Weyl decomposition~\ref{schurweylduality}, which splits the tensor space into components labeled by partitions $\lambda:$
\[
w_A = \sum_{\lambda} w_{A,\lambda}, \quad w_B = \sum_{\lambda} g \cdot w_{A,\lambda}.
\]
Thus, the action of \( g \) transforms each component \( w_{A,\lambda} \) according to \( g \cdot w_{A,\lambda} \in \mathbb{S}_\lambda(V) \otimes U_\lambda \).

By Borel-Weil~\ref{borelweilthm}, we have $\mathbb{S}_\lambda(V) \cong \mathrm{H}^0(G/B, L_\lambda).$
Each component \( w_{A,\lambda} \) corresponds to a morphism:
\[
\tilde{w}_{A,\lambda}: U_\lambda^* \to \mathrm{H}^0(G/B, L_\lambda).
\]
Under the action of \( g \in G \), the image of this morphism transforms as:
\[
\mathrm{im}(\tilde{w}_{B,\lambda}) = g \cdot \mathrm{im}(\tilde{w}_{A,\lambda}) \subseteq \mathrm{H}^0(G/B, L_\lambda),
\]
which shows that the image subspace transforms equivariantly under the group action.

The action of \( G = \mathrm{PGL}(V) \) on the projective space \( \mathbb{P}(\mathrm{H}^0(G/B, L_\lambda)) \) is by projective automorphisms, which preserve the dimension and degree of the projective variety 
\[
X_\lambda(A) := \mathrm{Base}\big( \tilde{w}_{A,\lambda} \big) \subseteq G/B.
\]
Specifically, the dimension and degree of the primary components, as well as the number of primary components, are invariant under isomorphism.
\end{proof}

The following boxes summarize the equivalence classes of components described in Theorem~\ref{maintheorem-inv}.

\noindent\textbf{Box 1}

\renewcommand{\arraystretch}{1.2}
\resizebox{\textwidth}{!}{%
\begin{tabular}{|c|l|c|c|c|}
\hline
\textbf{Algebra} & \textbf{Schemes} & $\boldsymbol{\mathrm{dimdegPC}}(X)$ & $\boldsymbol{\mathrm{dimdegPC}}(X^{\mathrm{sing}})$ & $\boldsymbol{\mathrm{dimdegPC}}(\bar{X}^{\mathrm{sing}})$ \\
\hline
\multirow{3}{*}{B} 
  & \(X_{400}\)   & [2,4] & [1,1]$\times$2, [0,8]$\times$2 & [1,1]$\times$2 \\
  & \(X_{020}\)   & [2,4]$\times$2 & [1,2]$\times$2, [0,5]$\times$2 & [1,2]$\times$2 \\
  & \(X_{101}\)   & [2,1]$\times$8, [2,2]$\times$6 & [1,1]$\times$24 & [1,1]$\times$24 \\
\hline
\end{tabular}
}

\noindent\textbf{Box 2}

\resizebox{\textwidth}{!}{%
\begin{tabular}{|c|l|c|c|c|}
\hline
\textbf{Algebra} & \textbf{Schemes} & $\boldsymbol{\mathrm{dimdegPC}}(X)$ & $\boldsymbol{\mathrm{dimdegPC}}(X^{\mathrm{sing}})$ & $\boldsymbol{\mathrm{dimdegPC}}(\bar{X}^{\mathrm{sing}})$ \\
\hline
\multirow{3}{*}{C} 
 & \(X_{400}\)   & [2,4] & [1,1]$\times$2, [0,6]$\times$2, [0,2]$\times$2 & [1,1]$\times$2 \\
 & \(X_{020}\)   & [2,8] & [0,7]$\times$2, [0,1]$\times$2 & [0,1]$\times$4 \\
 & \(X_{101}\)  & [2,1]$\times$8, [2,2]$\times$6 & [1,1]$\times$24 & [1,1]$\times$24 \\
\hline
\end{tabular}
}

\noindent\textbf{Box 3}

\resizebox{\textwidth}{!}{%
\begin{tabular}{|c|l|c|c|c|}
\hline
\textbf{Algebra} & \textbf{Schemes} & $\boldsymbol{\mathrm{dimdegPC}}(X)$ & $\boldsymbol{\mathrm{dimdegPC}}(X^{\mathrm{sing}})$ & $\boldsymbol{\mathrm{dimdegPC}}(\bar{X}^{\mathrm{sing}})$ \\
\hline
\multirow{3}{*}{D} 
 & \(X_{400}\)   & [2,1], [2,3] & [1,1]$\times$4, [0,4]$\times$3 & [1,1]$\times$4 \\
 & \(X_{020}\)   & [2,6], [2,1]$\times$2 & [1,1]$\times$5, [0,3]$\times$2, [0,2]$\times$2 & [1,1]$\times$5 \\
 & \(X_{101}\)   & [3,6]$\times$2, [3,4]$\times$2  & [2,3]$\times$4, [0,1]$\times$2 &[2,3]$\times$4, [0,1]$\times$2 \\
\hline
\end{tabular}
}

\noindent\textbf{Box 4}

\renewcommand{\arraystretch}{1.2}
\resizebox{\textwidth}{!}{%
\begin{tabular}{|c|l|c|c|c|}
\hline
\textbf{Algebras} & \textbf{Schemes} & $\boldsymbol{\mathrm{dimdegPC}}(X)$ & $\boldsymbol{\mathrm{dimdegPC}}(X^{\mathrm{sing}})$ & $\boldsymbol{\mathrm{dimdegPC}}(\bar{X}^{\mathrm{sing}})$ \\
\hline
\multirow{3}{*}{(E, J)}    
& \(X_{400}\)   & [2,4] & [1,1]$\times$2, [0,8]$\times$2 & [1,1]$\times$2 \\ 
& \(X_{020}\)   & [2,2]$\times$4 & [1,2]$\times$4 & [1,2]$\times$4 \\
& \(X_{101}\)   & [2,16], [2,2]$\times$2 & [1,4]$\times$2 & [1,4]$\times$2 \\
\hline
\end{tabular}
}

\noindent\textbf{Box 5}

\resizebox{\textwidth}{!}{%
\begin{tabular}{|c|l|c|c|c|}
\hline
\textbf{Algebras} & \textbf{Schemes} & $\boldsymbol{\mathrm{dimdegPC}}(X)$ & $\boldsymbol{\mathrm{dimdegPC}}(X^{\mathrm{sing}})$ & $\boldsymbol{\mathrm{dimdegPC}}(\bar{X}^{\mathrm{sing}})$ \\
\hline
\multirow{3}{*}{(F, I)} 
  & \(X_{400}\)   & [2,4]  & [1,1]$\times$2, [0,8]$\times$2 & [1,1]$\times$2 \\
  & \(X_{020}\)   & [2,8]  & [0,7]$\times$2, [0,1]$\times$2 & [0,1]$\times$4 \\
  & \(X_{101}\)  & [3,6]$\times$2, [3,4]$\times$2  & [2,3]$\times$4, [0,1]$\times$2 &[2,3]$\times$4, [0,1]$\times$2 \\
\hline
\end{tabular}
}

\noindent\textbf{Box 6}

\resizebox{\textwidth}{!}{%
\begin{tabular}{|c|l|c|c|c|}
\hline
\textbf{Algebra} & \textbf{Schemes} & $\boldsymbol{\mathrm{dimdegPC}}(X)$ & $\boldsymbol{\mathrm{dimdegPC}}(X^{\mathrm{sing}})$ & $\boldsymbol{\mathrm{dimdegPC}}(\bar{X}^{\mathrm{sing}})$ \\
\hline
\multirow{3}{*}{G} 
 & \(X_{400}\)   & [2,1], [2,3] & [1,1]$\times$4, [0,4]$\times$3 & [1,1]$\times$4 \\
 & \(X_{020}\)   & [2,3]$\times$2, [2,1]$\times$2 & [1,3], [1,1]$\times$5, [0,3]$\times$2 & [1,3], [1,1]$\times$5 \\
 & \(X_{101}\)   & [3,6]$\times$2, [3,4]$\times$2  & [2,3]$\times$4, [0,1]$\times$2 & [2,3]$\times$4, [0,1]$\times$2 \\
\hline
\end{tabular}
}

\pagebreak

\noindent\textbf{Box 7}

\resizebox{\textwidth}{!}{%
\begin{tabular}{|c|l|c|c|c|}
\hline
\textbf{Algebra} & \textbf{Schemes} & $\boldsymbol{\mathrm{dimdegPC}}(X)$ & $\boldsymbol{\mathrm{dimdegPC}}(X^{\mathrm{sing}})$ & $\boldsymbol{\mathrm{dimdegPC}}(\bar{X}^{\mathrm{sing}})$ \\
\hline
\multirow{4}{*}{K} 
  & \(X_{400}\)  & [2,1]$\times$4 & [1,1]$\times$6 &  [1,1]$\times$6 \\
  & \(X_{020}\)  & [2,2]$\times$2, [2,1]$\times$4 & [1,1]$\times$10 & [1,1]$\times$10 \\
  & \(X_{101}\)  & [3,6]$\times$2, [3,4]$\times$2  & [2,3]$\times$4 &[2,3]$\times$4 \\
  & \(X_{210}\)
& \parbox[t]{5cm}{\centering [3,6]$\times$2, [3,5]$\times$2, [2,14], [2,8]\\[0.5ex] }
& \parbox[t]{6cm}{\centering [2,4]$\times$2, [2,3]$\times$2, [2,6], [1,16], [1,8]$\times$2, \\[0.5ex]
 [1,12], [1,4]$\times$4, [0,20], [0,94], [0,168], [0,276] \\[0.5ex] }
& \parbox[t]{5cm}{\centering [2,6], [2,3]$\times$2, [2,1]$\times$2, [1,4]$\times$4 \\[0.5ex] } \\
\hline
\end{tabular}
}

\noindent\textbf{Box 8}

\resizebox{\textwidth}{!}{%
\begin{tabular}{|c|l|c|c|c|}
\hline
\textbf{Algebra} & \textbf{Schemes} & $\boldsymbol{\mathrm{dimdegPC}}(X)$ & $\boldsymbol{\mathrm{dimdegPC}}(X^{\mathrm{sing}})$ & $\boldsymbol{\mathrm{dimdegPC}}(\bar{X}^{\mathrm{sing}})$ \\
\hline
\multirow{4}{*}{L} 
  & \(X_{400}\)  & [2,1]$\times$4 & [1,1]$\times$6 &  [1,1]$\times$6 \\
  & \(X_{020}\)  & [2,2]$\times$2, [2,1]$\times$4 & [1,1]$\times$10 & [1,1]$\times$10 \\
  & \(X_{101}\)  & [3,6]$\times$2, [3,4]$\times$2  & [2,3]$\times$4 &[2,3]$\times$4 \\
  & \(X_{210}\)
& \parbox[t]{5cm}{\centering [3,6]$\times$2, [3,5]$\times$2, [2,14], [2,4]$\times$2}
& \parbox[t]{6cm}{\centering [2,4]$\times$2, [2,3]$\times$4, [1,16], [1,8]$\times$2, [1,12],\\[0.5ex]
[1,4]$\times$2, [1,2]$\times$4, [0,10]$\times$2, [0,168], [0,276]}
& \parbox[t]{5cm}{\centering [2,3]$\times$4, [2,1]$\times$2, [1,4]$\times$2, [1,2]$\times$4} \\
\hline
\end{tabular}
}

\noindent\textbf{Box 9}

\resizebox{\textwidth}{!}{%
\begin{tabular}{|c|l|c|c|c|}
\hline
\textbf{Algebra} & \textbf{Schemes} & $\boldsymbol{\mathrm{dimdegPC}}(X)$ & $\boldsymbol{\mathrm{dimdegPC}}(X^{\mathrm{sing}})$ & $\boldsymbol{\mathrm{dimdegPC}}(\bar{X}^{\mathrm{sing}})$ \\
\hline
\multirow{3}{*}{M} 
  & \(X_{400}\)   & [2,1]$\times$4 & [1,1]$\times$6 & [1,1]$\times$6 \\
  & \(X_{020}\)   & [3,4] & [0,1]$\times$6 & [0,1]$\times$6 \\
  & \(X_{101}\) & [3,20] &  [0,1]$\times$12 & [0,1]$\times$12 \\
\hline
\end{tabular}
}

\noindent\textbf{Box 10}

\resizebox{\textwidth}{!}{%
\begin{tabular}{|c|l|c|c|c|}
\hline
\textbf{Algebras} & \textbf{Schemes} & $\boldsymbol{\mathrm{dimdegPC}}(X)$ & $\boldsymbol{\mathrm{dimdegPC}}(X^{\mathrm{sing}})$ & $\boldsymbol{\mathrm{dimdegPC}}(\bar{X}^{\mathrm{sing}})$ \\
\hline
\multirow{3}{*}{(N, P) } 
  & \(X_{400}\)  & [2,4] & [1,1]$\times$2, [0,8], [0,4]$\times$2 & [1,1] $\times$ 2 \\
  & \(X_{020}\)  & [2,2]$\times$4 & [1,2]$\times4$ & [1,2]$\times4$ \\
  & \(X_{101}\)  & 0 & 0 & 0 \\
\hline
\end{tabular}
}

\noindent\textbf{Box 11}

\resizebox{\textwidth}{!}{%
\begin{tabular}{|c|l|c|c|c|}
\hline
\textbf{Algebra} & \textbf{Schemes} & $\boldsymbol{\mathrm{dimdegPC}}(X)$ & $\boldsymbol{\mathrm{dimdegPC}}(X^{\mathrm{sing}})$ & $\boldsymbol{\mathrm{dimdegPC}}(\bar{X}^{\mathrm{sing}})$ \\
\hline
\multirow{3}{*}{O} 
  & \(X_{400}\)   & [2,4] & [1,1]$\times$2, [0,8], [0,4]$\times$2 & [1,1]$\times$2 \\
  & \(X_{020}\)   & [2,8] & [0,7]$\times$2 & [0,1]$\times$2 \\
  & \(X_{101}\)   & [4,20] & [1,2]$\times$2 & [1,2]$\times$2 \\
\hline
\end{tabular}
}

\noindent\textbf{Box 12}

\resizebox{\textwidth}{!}{%
\begin{tabular}{|c|l|c|c|c|}
\hline
\textbf{Algebras} & \textbf{Schemes} & $\boldsymbol{\mathrm{dimdegPC}}(X)$ & $\boldsymbol{\mathrm{dimdegPC}}(X^{\mathrm{sing}})$ & $\boldsymbol{\mathrm{dimdegPC}}(\bar{X}^{\mathrm{sing}})$ \\
\hline
\multirow{3}{*}{Q, V} 
  & \(X_{400}\)   & [2,1],[2,3]
 & [1,1]$\times$4, [0,2]$\times$2, [0,4]$\times$2 &[1,1]$\times$4   \\
  & \(X_{020}\)   & [2,2]$\times$3, [2,1]$\times$2 & [1,1]$\times$5, [1,2]$\times$2 & [1,1]$\times$5, [1,2]$\times$2   \\
  & \(X_{101}\)   & [2,3]$\times$4, [2,2]$\times$4 & [2,1]$\times$2, [1,3]$\times$4, [1,2]$\times$2, [1,1]$\times$4, [0,42] & [2,1]$\times$2, [1,1]$\times$4 \\
\hline
\end{tabular}
}

\begin{question}
Are the algebras Q and V isomorphic?
\end{question}

\pagebreak
\noindent\textbf{Box 13}

\resizebox{\textwidth}{!}{%
\begin{tabular}{|c|l|c|c|c|}
\hline
\textbf{Algebra} & \textbf{Schemes} & $\boldsymbol{\mathrm{dimdegPC}}(X)$ & $\boldsymbol{\mathrm{dimdegPC}}(X^{\mathrm{sing}})$ & $\boldsymbol{\mathrm{dimdegPC}}(\bar{X}^{\mathrm{sing}})$ \\
\hline
\multirow{3}{*}{R} 
  & \(X_{400}\)   & [2,4] & [1,1]$\times$2, [0,8]$\times$2 & [1,1]$\times$2 \\
  & \(X_{020}\)   & [2,4]$\times$2 & [1,4], [0,7]$\times$2, [0,1]$\times$2 & [1,4], [0,1]$\times$2 \\
  & \(X_{101}\)  & [2,1]$\times$8, [2,2]$\times$6 & [1,1]$\times$24 & [1,1]$\times$24 \\
\hline
\end{tabular}
}

\noindent\textbf{Box 14}

\resizebox{\textwidth}{!}{%
\begin{tabular}{|c|l|c|c|c|}
\hline
\textbf{Algebra} & \textbf{Schemes} & $\boldsymbol{\mathrm{dimdegPC}}(X)$ & $\boldsymbol{\mathrm{dimdegPC}}(X^{\mathrm{sing}})$ & $\boldsymbol{\mathrm{dimdegPC}}(\bar{X}^{\mathrm{sing}})$ \\
\hline
\multirow{3}{*}{S} 
 & \(X_{400}\)   & [2,4] & [1,1]$\times$2, [0,4]$\times$2, [0,3]$\times$2, [0,2]$\times$2 & [1,1]$\times$2 \\
 & \(X_{020}\)   & [2,8] & [2,4], [0,2]$\times$2 & [2,4] \\
 & \(X_{101}\)   & 0 & 0 & 0 \\
\hline
\end{tabular}
}

\noindent\textbf{Box 15}

\resizebox{\textwidth}{!}{%
\begin{tabular}{|c|l|c|c|c|}
\hline
\textbf{Algebras} & \textbf{Schemes} & $\boldsymbol{\mathrm{dimdegPC}}(X)$ & $\boldsymbol{\mathrm{dimdegPC}}(X^{\mathrm{sing}})$ & $\boldsymbol{\mathrm{dimdegPC}}(\bar{X}^{\mathrm{sing}})$ \\
\hline
\multirow{3}{*}{(T, U)} 
  & \(X_{400}\)   & [2,4]  & [1,1]$\times$2, [0,4]$\times$3, [0,3], [0,2]  & [1,1]$\times$2  \\
  & \(X_{020}\)   & [2,4]$\times$2  & [1,1]$\times$4,[0,6]$\times$2, [0,4]$\times$2 & [1,1]$\times$4 \\
  & \(X_{101}\)   & [4,10]$\times$2 & [3,6], [0,4]  & [3,6] \\
\hline
\end{tabular}
}

\noindent\textbf{Box 16}

\resizebox{\textwidth}{!}{%
\begin{tabular}{|c|l|c|c|c|}
\hline
\textbf{Algebras} & \textbf{Schemes} & $\boldsymbol{\mathrm{dimdegPC}}(X)$ & $\boldsymbol{\mathrm{dimdegPC}}(X^{\mathrm{sing}})$ & $\boldsymbol{\mathrm{dimdegPC}}(\bar{X}^{\mathrm{sing}})$ \\
\hline
\multirow{3}{*}{(W, Z)} 
  & \(X_{400}\)   & [2,4] & [1,1]$\times$2, [0,8] [0,4]$\times$2 & [1,1]$\times$2 \\
  & \(X_{020}\)   & [3,4] & [0,1]$\times$2 &  [0,1]$\times$2 \\
  & \(X_{101}\)   & [4,20] & [1,2] & [1,2] \\
\hline
\end{tabular}
}

\noindent\textbf{Box 17}

\renewcommand{\arraystretch}{1.2}
\resizebox{\textwidth}{!}{%
\begin{tabular}{|c|l|c|c|c|}
\hline
\textbf{Algebra} & \textbf{Schemes} & $\boldsymbol{\mathrm{dimdegPC}}(X)$ & $\boldsymbol{\mathrm{dimdegPC}}(X^{\mathrm{sing}})$ & $\boldsymbol{\mathrm{dimdegPC}}(\bar{X}^{\mathrm{sing}})$ \\
\hline
\multirow{3}{*}{Central Extensions} 
  & \(X_{400}\)   & 0 & 0 & 0 \\
  & \(X_{020}\)   & 0 & 0 & 0 \\
  & \(X_{101}\)   & 0 & 0 & 0 \\
\hline
\end{tabular}
}

\noindent\textbf{Box 18}

\renewcommand{\arraystretch}{1.2}
\resizebox{\textwidth}{!}{%
\begin{tabular}{|c|l|c|c|c|}
\hline
\textbf{Algebra} & \textbf{Schemes} & $\boldsymbol{\mathrm{dimdegPC}}(X)$ & $\boldsymbol{\mathrm{dimdegPC}}(X^{\mathrm{sing}})$ & $\boldsymbol{\mathrm{dimdegPC}}(\bar{X}^{\mathrm{sing}})$ \\
\hline
\multirow{3}{*}{Central Extensions twist} 
  & \(X_{400}\)   & [2,1], [2,3] & [1,3]  & [1,3] \\
  & \(X_{020}\)   & 0 & 0 & 0 \\
  & \(X_{101}\)   & [4,10]$\times$2 & [3,6] & [3,6] \\
\hline
\end{tabular}
}

\noindent\textbf{Box 19}

\resizebox{\textwidth}{!}{%
\begin{tabular}{|c|l|c|c|c|}
\hline
\textbf{Algebra} & \textbf{Schemes} & $\boldsymbol{\mathrm{dimdegPC}}(X)$ & $\boldsymbol{\mathrm{dimdegPC}}(X^{\mathrm{sing}})$ & $\boldsymbol{\mathrm{dimdegPC}}(\bar{X}^{\mathrm{sing}})$ \\
\hline
\multirow{3}{*}{4D Sklyanin} 
  & \(X_{400}\)   & 0 & 0 & 0 \\
  & \(X_{020}\)   & [3,4] & [0,1]$\times$6 & [0,1] $\times$ 6  \\
  & \(X_{101}\)   & 0 & 0 & 0 \\
\hline
\end{tabular}
}

\pagebreak
\noindent\textbf{Box 20}

\resizebox{\textwidth}{!}{%
\begin{tabular}{|c|l|c|c|c|}
\hline
\textbf{Algebra} & \textbf{Schemes} & $\boldsymbol{\mathrm{dimdegPC}}(X)$ & $\boldsymbol{\mathrm{dimdegPC}}(X^{\mathrm{sing}})$ & $\boldsymbol{\mathrm{dimdegPC}}(\bar{X}^{\mathrm{sing}})$ \\
\hline
\multirow{3}{*}{$S_{\infty}$} 
  & \(X_{400}\)  & [2,4] & [ ] &  [ ] \\
  & \(X_{020}\)   & [3,4] & [0,1]$\times$2 & [0,1]$\times$2 \\
  & \(X_{101}\)   &   [4,20]  & [1,2]$\times$2 &  [1,2]$\times$2 \\
\hline
\end{tabular}
}

\noindent\textbf{Box 21}

\resizebox{\textwidth}{!}{%
\begin{tabular}{|c|l|c|c|c|}
\hline
\textbf{Algebras} & \textbf{Schemes} & $\boldsymbol{\mathrm{dimdegPC}}(X)$ & $\boldsymbol{\mathrm{dimdegPC}}(X^{\mathrm{sing}})$ & $\boldsymbol{\mathrm{dimdegPC}}(\bar{X}^{\mathrm{sing}})$ \\
\hline
\multirow{3}{*}{$S_{d,i}$ $(i= 1,\ldots,6)$} 
  & \(X_{400}\)   & [2,1]$\times$2, [2,2] & [1,2]$\times$2, [1,1] & [1,2]$\times$2, [1,1]   \\
  & \(X_{020}\)   & [3,4]  & [0,1]$\times$6 & [0,1]$\times$6  \\
  & \(X_{101}\)   & [2,6]$\times$2, [2,1]$\times$4, [2,2]$\times$2  & [1,1]$\times$16 & [1,1]$\times$16    \\
\hline
\end{tabular}
}

\begin{remark} (see the remarks after \cite[Thm.~0.4]{{Stafford1994}})
Cyclically permuting the generators $x_1, x_2, x_3$ of $S_{d,i}$ defines an isomorphism with an algebra in the family $S_{d,i+2}$ (indices considered modulo 6): 
\[
S_{d,1} \simeq S_{d,3} \simeq S_{d,5} \quad \text{and} \quad 
S_{d,2} \simeq S_{d,4} \simeq S_{d,6}.
\]
\end{remark}

\noindent\textbf{Box 22}

\resizebox{\textwidth}{!}{%
\begin{tabular}{|c|l|c|c|c|}
\hline
\textbf{Algebra} & \textbf{Schemes} & $\boldsymbol{\mathrm{dimdegPC}}(X)$ & $\boldsymbol{\mathrm{dimdegPC}}(X^{\mathrm{sing}})$ & $\boldsymbol{\mathrm{dimdegPC}}(\bar{X}^{\mathrm{sing}})$ \\
\hline
\multirow{3}{*}{Clifford} 
  & \(X_{400}\)   & [2,4] & [ ] &  [ ]  \\
  & \(X_{020}\)   & 0 & 0 &  0 \\
  & \(X_{101}\)   & 0 & 0 &  0 \\
\hline
\end{tabular}
}

\noindent\textbf{Box 23}

\resizebox{\textwidth}{!}{%
\begin{tabular}{|c|l|c|c|c|}
\hline
\textbf{Algebra} & \textbf{Schemes} & $\boldsymbol{\mathrm{dimdegPC}}(X)$ & $\boldsymbol{\mathrm{dimdegPC}}(X^{\mathrm{sing}})$ & $\boldsymbol{\mathrm{dimdegPC}}(\bar{X}^{\mathrm{sing}})$ \\
\hline
\multirow{3}{*}{$\mathbb{K}_Q[x_1, x_2, x_3, x_4]$} 
  & \(X_{400}\)  & [2,1]$\times$4 & [1,1]$\times$6 &  [1,1]$\times$6  \\
  & \(X_{020}\)   & [2,1]$\times$8 &  [1,1]$\times$12  & [1,1]$\times$12    \\
  & \(X_{101}\)   & [2,1]$\times$8, [2,2]$\times$6 & [1,1]$\times$24 & [1,1]$\times$24 \\
\hline
\end{tabular}
}

\noindent\textbf{Box 24}

\resizebox{\textwidth}{!}{%
\begin{tabular}{|c|l|c|c|c|}
\hline
\textbf{Algebra} & \textbf{Schemes} & $\boldsymbol{\mathrm{dimdegPC}}(X)$ & $\boldsymbol{\mathrm{dimdegPC}}(X^{\mathrm{sing}})$ & $\boldsymbol{\mathrm{dimdegPC}}(\bar{X}^{\mathrm{sing}})$ \\
\hline
\multirow{3}{*}{Cassidy-Vancliff 1} 
  & \(X_{400}\)  & 0 & 0 &  0 \\
  & \(X_{020}\)   & [3,2]$\times$2 & [2,2] & [2,2]    \\
  & \(X_{101}\)   & 0 & 0 &  0 \\
\hline
\end{tabular}
}

\noindent\textbf{Box 25}

\resizebox{\textwidth}{!}{%
\begin{tabular}{|c|l|c|c|c|}
\hline
\textbf{Algebra} & \textbf{Schemes} & $\boldsymbol{\mathrm{dimdegPC}}(X)$ & $\boldsymbol{\mathrm{dimdegPC}}(X^{\mathrm{sing}})$ & $\boldsymbol{\mathrm{dimdegPC}}(\bar{X}^{\mathrm{sing}})$ \\
\hline
\multirow{3}{*}{ $R(3,a)$ } 
  & \(X_{400}\)  &  [2,2]$\times$2 & [2,1], [1,6] & [2,1] \\  
  & \(X_{020}\)   & [2,2]$\times$2, [2,4] & [2,1]$\times$2, [1,12], [0,26] & [2,1]$\times$2 \\
  & \(X_{101}\)   &[2,12], [2,4]$\times$2 & [2,10], [2,3]$\times$2, [1,20]$\times$2 & [2,1]$\times$2, [2,2]  \\
\hline
\end{tabular}
}

\pagebreak
\noindent\textbf{Box 26}

\resizebox{\textwidth}{!}{%
\begin{tabular}{|c|l|c|c|c|}
\hline
\textbf{Algebra} & \textbf{Schemes} & $\boldsymbol{\mathrm{dimdegPC}}(X)$ & $\boldsymbol{\mathrm{dimdegPC}}(X^{\mathrm{sing}})$ & $\boldsymbol{\mathrm{dimdegPC}}(\bar{X}^{\mathrm{sing}})$ \\
\hline
\multirow{4}{*}{$S(2,3)^{\sigma}$} 
  & \(X_{400}\)  & 0 & 0 &  0 \\
  & \(X_{020}\)  & 0 & 0 &  0 \\
  & \(X_{101}\)  & [4,10]$\times$2 & [3,6], [0,4] & [3,6] \\
  & \(X_{210}\)  & [4,56] & \textbf{\texttt{*****}}  &  [2,1], [1,1]$\times$7 \\
\hline
\end{tabular}
}

\noindent\textbf{Box 27}

\resizebox{\textwidth}{!}{%
\begin{tabular}{|c|l|c|c|c|}
\hline
\textbf{Algebra} & \textbf{Schemes} & $\boldsymbol{\mathrm{dimdegPC}}(X)$ & $\boldsymbol{\mathrm{dimdegPC}}(X^{\mathrm{sing}})$ & $\boldsymbol{\mathrm{dimdegPC}}(\bar{X}^{\mathrm{sing}})$ \\
\hline
\multirow{4}{*}{CentExt Type $B$}
  & \(X_{400}\)  & 0 & 0 &  0 \\
  & \(X_{020}\)  & 0 & 0 &  0 \\
  & \(X_{101}\)  & [4,10]$\times$2 & [3,6], [0,4] & [3,6] \\
   & \(X_{210}\)  & [4,56] & [1,5], [1,8], [1,2], [1,1]$\times$2  & [1,5], [1,8], [1,2], [1,1]$\times$2 \\
\hline
\end{tabular}
}


\noindent\textbf{Box 28}

\resizebox{\textwidth}{!}{%
\begin{tabular}{|c|l|c|c|c|}
\hline
\textbf{Algebra} & \textbf{Schemes} & $\boldsymbol{\mathrm{dimdegPC}}(X)$ & $\boldsymbol{\mathrm{dimdegPC}}(X^{\mathrm{sing}})$ & $\boldsymbol{\mathrm{dimdegPC}}(\bar{X}^{\mathrm{sing}})$ \\
\hline
\multirow{3}{*}{$L(1,1,2)^{\sigma}$} 
  & \(X_{400}\)  & 0 & 0 &  0 \\
  & \(X_{020}\)   & [3,2]$\times$2 & [2,2], [0,1]$\times$2 & [2,2], [0,1]$\times$2  \\
  & \(X_{101}\)   & [3,20] & [0,1]$\times$12 & [0,1]$\times$12  \\
\hline
\end{tabular}
}

\noindent\textbf{Box 29}

\resizebox{\textwidth}{!}{%
\begin{tabular}{|c|l|c|c|c|}
\hline
\textbf{Algebra} & \textbf{Schemes} & $\boldsymbol{\mathrm{dimdegPC}}(X)$ & $\boldsymbol{\mathrm{dimdegPC}}(X^{\mathrm{sing}})$ & $\boldsymbol{\mathrm{dimdegPC}}(\bar{X}^{\mathrm{sing}})$ \\
\hline
\multirow{3}{*}{ $A_5$} 
  & \(X_{400}\) & [2,1], [2,3] & [1,3], [0,8] & [1,3], [0,1] \\  
  & \(X_{020}\)   & [3,4] & [1,2], [0,3]$\times$2 & [1,1]\\
  & \(X_{101}\)  & [3,6]$\times$2, [3,4]$\times$2 & [2,3]$\times$2, [2,2]$\times$2, [2,1]$\times$2, [1,4]$\times$2  & [2,3]$\times$2, [2,2]$\times$2, [2,1]$\times$2 \\
\hline
\end{tabular}
}

\noindent\textbf{Box 30}

\resizebox{\textwidth}{!}{%
\begin{tabular}{|c|l|c|c|c|}
\hline
\textbf{Algebras} & \textbf{Schemes} & $\boldsymbol{\mathrm{dimdegPC}}(X)$ & $\boldsymbol{\mathrm{dimdegPC}}(X^{\mathrm{sing}})$ & $\boldsymbol{\mathrm{dimdegPC}}(\bar{X}^{\mathrm{sing}})$ \\
\hline
\multirow{3}{*}{%
\begin{tabular}{c}
\rule{0pt}{3.2ex}%
$\mathcal{F}_{\scriptscriptstyle\substack{(0,-1,-1,2)\\(0,0,0,0)}}$, \\
OreExt Type $S_1^{'}$
\end{tabular}}
  & \(X_{400}\)  & [2,1]$\times$2, [2,2] & [1,2], [1,1]$\times$3, [0,5] & [1,2], [1,1]$\times$3 \\
  & \(X_{020}\)   & [2,1]$\times$8 & [1,1]$\times$12 & [1,1]$\times$12 \\
  & \(X_{101}\)   & [2,1]$\times$8, [2,2]$\times$6  & [1,1]$\times$24 & [1,1]$\times$24 \\
\hline
\end{tabular}
}

\begin{remark}
The algebras $\mathcal{F}_{\scriptscriptstyle\substack{(0,-1,-1,2)\\(0,0,0,0)}}$ and OreExt Type $S_1^{'}$ are isomorphic under the linear change of generators 
$ \phi: x_1 \mapsto t, \;\; x_2 \mapsto \lambda x_2, \;\; x_3 \mapsto \mu x_1, \;\; x_4 \mapsto \nu x_3 $
where
$c^{2}d = \frac{q_{13}}{q_{14}^{2}}, \quad b^{2}d = \frac{1}{q_{13}}, \quad b c d = \frac{1}{q_{14}}, \quad \nu^2 = -\mu \lambda, \quad a = -q_{23}, \quad  \frac{\alpha}{a} = -q_{24}.$
\end{remark}

\noindent\textbf{Box 31}

\resizebox{\textwidth}{!}{%
\begin{tabular}{|c|l|c|c|c|}
\hline
\textbf{Algebra} & \textbf{Schemes} & $\boldsymbol{\mathrm{dimdegPC}}(X)$ & $\boldsymbol{\mathrm{dimdegPC}}(X^{\mathrm{sing}})$ & $\boldsymbol{\mathrm{dimdegPC}}(\bar{X}^{\mathrm{sing}})$ \\
\hline
\multirow{3}{*}{$\mathcal{F}_{\scriptscriptstyle\substack{(-1,-1,1,1) \\ (0,0,0,0)}}$} 
  & \(X_{400}\)  & [2,1]$\times$2, [2,2] & [1,1]$\times$5, [0,4]$\times$2 &[1,1]$\times$5 \\
  & \(X_{020}\)   & [2,4], [2,1]$\times$4 & [1,1]$\times$8, [0,2]$\times$4 & [1,1]$\times$8\\
  & \(X_{101}\)  & [2,1]$\times$8, [2,2]$\times$6  & [1,1]$\times$24 & [1,1]$\times$24 \\
\hline
\end{tabular}
}

\pagebreak
\noindent\textbf{Box 32}

\resizebox{\textwidth}{!}{%
\begin{tabular}{|c|l|c|c|c|}
\hline
\textbf{Algebra} & \textbf{Schemes} & $\boldsymbol{\mathrm{dimdegPC}}(X)$ & $\boldsymbol{\mathrm{dimdegPC}}(X^{\mathrm{sing}})$ & $\boldsymbol{\mathrm{dimdegPC}}(\bar{X}^{\mathrm{sing}})$ \\
\hline
\multirow{3}{*}{ $\mathcal{F}_{\scriptscriptstyle\substack{(0,-1,-1,2) \\ (-1,0,-1,2) \\ (0,0,0,0)}}$} 
  & \(X_{400}\)  & [2,1], [2,3] & [1,1]$\times$3, [0,7], [0,5]$\times$2 &[1,1]$\times$3 \\
  & \(X_{020}\)  & [2,1]$\times$8 & [1,1]$\times$12 & [1,1]$\times$12 \\
  & \(X_{101}\)   & [2,1]$\times$8, [2,2]$\times$6  & [1,1]$\times$24 & [1,1]$\times$24 \\
\hline
\end{tabular}
}

\noindent\textbf{Box 33}

\resizebox{\textwidth}{!}{%
\begin{tabular}{|c|l|c|c|c|}
\hline
\textbf{Algebra} & \textbf{Schemes} & $\boldsymbol{\mathrm{dimdegPC}}(X)$ & $\boldsymbol{\mathrm{dimdegPC}}(X^{\mathrm{sing}})$ & $\boldsymbol{\mathrm{dimdegPC}}(\bar{X}^{\mathrm{sing}})$ \\
\hline
\multirow{3}{*}{ $\mathcal{F}_{\scriptscriptstyle\substack{(0,-1,-1,2) \\ (0,0,0,0) \\ (-1,-1,2,0)}}$} 
  & \(X_{400}\)  & [2,1], [2,3] & [1,2], [1,1], [0,10], [0,6] &[1,2], [1,1] \\
  & \(X_{020}\)  & [2,1]$\times$8 & [1,1]$\times$12 & [1,1]$\times$12 \\
  & \(X_{101}\)   & [2,1]$\times$8, [2,2]$\times$6  & [1,1]$\times$24 & [1,1]$\times$24 \\
\hline
\end{tabular}
}

\noindent\textbf{Box 34}

\resizebox{\textwidth}{!}{%
\begin{tabular}{|c|l|c|c|c|}
\hline
\textbf{Algebras} & \textbf{Schemes} & $\boldsymbol{\mathrm{dimdegPC}}(X)$ & $\boldsymbol{\mathrm{dimdegPC}}(X^{\mathrm{sing}})$ & $\boldsymbol{\mathrm{dimdegPC}}(\bar{X}^{\mathrm{sing}})$ \\
\hline
\multirow{3}{*}{\begin{tabular}{c}
OreExt Type $A_1$, \\[1pt]
OreExt Type $A_2$, \\[1pt]
OreExt Type $A_3$ 
\end{tabular}} 
  & \(X_{400}\)  & [2,1], [2,3] & [1,3], [0,8] & [1,3], [0,1] \\
  & \(X_{020}\)   & [3,4] & [0,1]$\times$6 & [0,1]$\times$6 \\
  & \(X_{101}\) & [2,1]$\times$8, [2,2]$\times$6  & [1,1]$\times$24 & [1,1]$\times$24 \\
\hline
\end{tabular}
}

\begin{question}
Are the OreExt algebras \(A_1, A_2,\) and \(A_3\) isomorphic?
\end{question}

\noindent\textbf{Box 35}

\resizebox{\textwidth}{!}{%
\begin{tabular}{|c|l|c|c|c|}
\hline
\textbf{Algebra} & \textbf{Schemes} & $\boldsymbol{\mathrm{dimdegPC}}(X)$ & $\boldsymbol{\mathrm{dimdegPC}}(X^{\mathrm{sing}})$ & $\boldsymbol{\mathrm{dimdegPC}}(\bar{X}^{\mathrm{sing}})$ \\
\hline
\multirow{3}{*}{ OreExt Type $B_1$ } 
  & \(X_{400}\)  & [2,1], [2,3] & [1,3], [0,8] & [1,3], [0,1] \\
  & \(X_{020}\)   & [3,2]$\times$2 & [2,1]$\times$2, [1,4] & [2,1]$\times$2 \\
  & \(X_{101}\)   & [4,10]$\times$2 &  [3,6], [0,4] & [3,6] \\
\hline
\end{tabular}
}

\noindent\textbf{Box 36}

\resizebox{\textwidth}{!}{%
\begin{tabular}{|c|l|c|c|c|}
\hline
\textbf{Algebras} & \textbf{Schemes} & $\boldsymbol{\mathrm{dimdegPC}}(X)$ & $\boldsymbol{\mathrm{dimdegPC}}(X^{\mathrm{sing}})$ & $\boldsymbol{\mathrm{dimdegPC}}(\bar{X}^{\mathrm{sing}})$ \\
\hline
\multirow{3}{*}{\begin{tabular}{c}
OreExt Type $E_1$, \\[2pt]
OreExt Type $E_2$ 
\end{tabular}} 
  & \(X_{400}\)  & [2,1], [2,3] & [1,3], [0,8] & [1,3], [0,1] \\
  & \(X_{020}\)  & [2,6], [2,1]$\times$2 & [1,3]$\times$2 & [1,3]$\times$2 \\
  & \(X_{101}\) & [3,20] & [0,1]$\times$6 & [0,1]$\times$6\\
\hline
\end{tabular}
}

\begin{question}
Are the OreExt algebras \(E_1\) and \(E_2\) isomorphic?
\end{question}

\noindent\textbf{Box 37}

\resizebox{\textwidth}{!}{%
\begin{tabular}{|c|l|c|c|c|}
\hline
\textbf{Algebras} & \textbf{Schemes} & $\boldsymbol{\mathrm{dimdegPC}}(X)$ & $\boldsymbol{\mathrm{dimdegPC}}(X^{\mathrm{sing}})$ & $\boldsymbol{\mathrm{dimdegPC}}(\bar{X}^{\mathrm{sing}})$ \\
\hline
\multirow{3}{*}{\begin{tabular}{c}
OreExt Type $H.$I, \\[2pt]
OreExt Type $H.$II
\end{tabular}} 
  & \(X_{400}\) & [2,1], [2,3] & [1,3], [0,8] & [1,3], [0,1] \\
  & \(X_{020}\)  &  [2,2]$\times$3, [2,1]$\times$2 & [2,1]$\times$2, [1,5], [1,2]$\times$2, [1,1]$\times$2 & [2,1]$\times$2,[1,1]$\times$2 \\
  & \(X_{101}\)  & [3,6]$\times$2, [3,4]$\times$2 & [2,3]$\times$2, [2,2]$\times$2, [2,1]$\times$2, [1,4]$\times$2  & [2,3]$\times$2, [2,2]$\times$2, [2,1]$\times$2  \\
\hline
\end{tabular}
}

\begin{question}
Are the OreExt algebras $H.$I and $H.$II isomorphic?
\end{question}

\pagebreak
\noindent\textbf{Box 38}

\resizebox{\textwidth}{!}{%
\begin{tabular}{|c|l|c|c|c|}
\hline
\textbf{Algebra} & \textbf{Schemes} & $\boldsymbol{\mathrm{dimdegPC}}(X)$ & $\boldsymbol{\mathrm{dimdegPC}}(X^{\mathrm{sing}})$ & $\boldsymbol{\mathrm{dimdegPC}}(\bar{X}^{\mathrm{sing}})$ \\
\hline
\multirow{3}{*}{ OreExt Type $S_2$ } 
  & \(X_{400}\)  & [2,1]$\times$2, [2,2] & [1,2]$\times$2, [1,1]$\times$2 & [1,2]$\times$2, [1,1]$\times$2 \\
  & \(X_{020}\)   & [2,2]$\times$2, [2,1]$\times$4 & [1,1]$\times$10 & [1,1]$\times$10    \\
  & \(X_{101}\)   & [3,6]$\times$2, [3,4]$\times$2 &[2,3]$\times$4  &  [2,3]$\times$4  \\
\hline
\end{tabular}
}

\noindent\textbf{Box 39}

\resizebox{\textwidth}{!}{%
\begin{tabular}{|c|l|c|c|c|}
\hline
\textbf{Algebra} & \textbf{Schemes} & $\boldsymbol{\mathrm{dimdegPC}}(X)$ & $\boldsymbol{\mathrm{dimdegPC}}(X^{\mathrm{sing}})$ & $\boldsymbol{\mathrm{dimdegPC}}(\bar{X}^{\mathrm{sing}})$ \\
\hline
\multirow{3}{*}{ CentExt Type $H$ } 
  & \(X_{400}\)  & 0 & 0 &  0 \\
  & \(X_{020}\)   & [3,2]$\times$2 & [2,1]$\times$2, [1,4] & [2,1]$\times$2    \\
  & \(X_{101}\) & [3,6]$\times$2, [3,4]$\times$2 & [2,3]$\times$2, [2,2]$\times$2, [2,1]$\times$2, [1,4]$\times$2  & [2,3]$\times$2, [2,2]$\times$2, [2,1]$\times$2  \\
\hline
\end{tabular}
}

\clearpage
\pagebreak

\bibliographystyle{plain}
\bibliography{bibliography}

\end{document}